\numberwithin{equation}{section}
\numberwithin{figure}{section}
  \theoremstyle{remark}
  \newtheorem*{acknowledgement*}{\protect\acknowledgementname}
\theoremstyle{plain}
\newtheorem{thm}{\protect\theoremname}[section]
  \theoremstyle{definition}
  \newtheorem{defn}[thm]{\protect\definitionname}
  \theoremstyle{remark}
  \newtheorem{rem}[thm]{\protect\remarkname}
  \theoremstyle{plain}
  \newtheorem{prop}[thm]{\protect\propositionname}
  \theoremstyle{plain}
  \newtheorem{lem}[thm]{\protect\lemmaname}
  \theoremstyle{plain}
  \newtheorem{cor}[thm]{\protect\corollaryname}
\newcommand{\Hom}{\operatorname{Hom}}
\newcommand{\Uq}{\mathbf{U}_q}
\newcommand{\wt}{\operatorname{wt}}
\newcommand{\height}{\operatorname{ht}}
\newcommand{\up}{\mathrm{up}}
\newcommand{\low}{\mathrm{low}}
\newcommand{\Glow}{G^{\low}}
\newcommand{\Gup}{G^{\up}}
\newcommand{\dprod}{\mathop{\overrightarrow{\prod}}\limits}
\newcommand{\Ker}{\operatorname{Ker}}
  \providecommand{\acknowledgementname}{Acknowledgement}
  \providecommand{\corollaryname}{Corollary}
  \providecommand{\definitionname}{Definition}
  \providecommand{\lemmaname}{Lemma}
  \providecommand{\propositionname}{Proposition}
  \providecommand{\remarkname}{Remark}
\providecommand{\theoremname}{Theorem}
\begin{document}

\title{Quantum twist maps and dual canonical bases}

\author{Yoshiyuki Kimura}

\address{Yoshiyuki Kimura, Department of Mathematics, Graduate School of Science,
Kobe University, 1-1, Rokkodai, Nada-ku, Kobe 657-8501, Japan}

\email{ykimura@math.kobe-u.ac.jp}

\author{Hironori Oya}

\address{Hironori Oya, Graduate School of Mathematical Sciences, The University
of Tokyo, Komaba, Tokyo, 153-8914, Japan}

\email{oya@ms.u-tokyo.ac.jp}

\thanks{The work of the first author was supported by JSPS Grant-in-Aid for Scientific
Research (S) 24224001. The work of the second author was supported by
Grant-in-Aid for JSPS Fellows (No.~15J09231) and the Program for
Leading Graduate Schools, MEXT, Japan.}
\begin{abstract}
In this paper, we show that quantum twist maps, introduced by Lenagan-Yakimov, induce bijections between dual canonical bases of quantum nilpotent subalgebras. As a corollary, we show the unitriangular property between dual canonical bases and Poincar\'e-Birkhoff-Witt type bases under the ``reverse'' lexicographic order. We also show that quantum twist maps induce bijections between certain unipotent quantum minors.
\end{abstract}

\maketitle

\section{Introduction}
\label{intro}
Let $\mathfrak{g}$ be a symmetrizable Kac-Moody Lie algebra and $\Uq:=\Uq(\mathfrak{g})$ the corresponding quantized universal enveloping algebra. We mainly focus on the subalgebra $\Uq^-(w)$, called the quantum nilpotent subalgebra, of the negative half $\Uq^-$ of $\Uq$, which is determined by an element $w$ of the Weyl group $W$ of $\mathfrak{g}$. The quantum nilpotent subalgebra $\Uq^-(w)$ is a quantum analogue of the coordinate ring of the unipotent group associated with $w$, and it has some nice structural properties. For example, it is known that $\Uq^-(w)$ has (dual) Poincar\'e-Birkhoff-Witt type bases and the dual canonical basis. Here the dual canonical basis is given by a subset of the dual canonical basis (=the upper global basis) of $\Uq^-$ defined by Lusztig \cite{MR1035415} and subsequently by Kashiwara \cite{MR1115118}. Moreover, there exists a quantum cluster algebra structure on $\Uq^-(w)$ \cite{GLS:qcluster,goodearlyakimov}. 

Lenagan-Yakimov \cite[Section 6]{lenagan2015prime} and Goodearl-Yakimov \cite[Section 8]{goodearl2016berenstein} introduced quantum analogues $\Theta_w$ ($w\in W$) of the Fomin-Zelevinsky twist maps  \cite{MR1652878,MR3096792} for quantized universal enveloping algebras. The quantum twist map $\Theta_w$ induces the anti-algebra isomorphism from $\Uq^-(w^{-1})$ to $\Uq^-(w)$. In this paper, we show that the quantum twist maps are restricted to the bijections between the dual canonical bases of quantum nilpotent subalgebras (Theorem \ref{t:mainthm}). As a corollary, we show the unitriangular property between dual canonical bases and dual Poincar\'e-Birkhoff-Witt type bases under the ``reverse'' lexicographic order (Corollary \ref{c:right-lex}). This is new when $\mathfrak{g}$ is not of finite type. (We discuss finite types in Section 4. See Remark \ref{r:finite}.) Moreover there are the specific dual canonical basis elements, called unipotent quantum minors. We also show that quantum twist maps induce bijections between certain unipotent quantum minors (Theorem \ref{t:mainthm2}). This result is a quantum analogue of \cite[Lemma 2.25]{MR1652878}. In particular, quantum twist maps preserve quantum $T$-systems (Corollary \ref{c:T-sys}). We should also remark that our results include the slight refinement of \cite[Proposition 6.1]{lenagan2015prime}(cf.~\cite[The equality (6.7)]{lenagan2015prime}).

The following are some related topics and the explicit relations between our results and them should be explored in future research: \hspace{-10pt}
\begin{itemize}
\item In the case of finite type, under the categorification via the quiver Hecke algebras, the ``reverse'' unitriangular property for (proper) standard modules is also proved by McNamara \cite[Theorem 3.1 (5)]{MR3403455}. 
\item There exists a variant of the twist maps treated in \cite{MR1405449,MR1456321,MR2833478} and its quantum analogue is proposed in \cite[Theorem 2.10, Conjecture 2.12 (c)]{MR3397447}. Berenstein-Rupel conjectured that it also preserves the dual canonical basis \cite[Conjecture 2.17 (a)]{MR3397447}. 
\end{itemize}

\begin{acknowledgement*}
The authors would like to express our sincere gratitude to Yoshihisa Saito, the supervisor of the second author, for his helpful comments.
\end{acknowledgement*}

\section{Preliminaries}\label{Preliminaries}

\subsection{Quantized universal enveloping algebras}\label{Quantized universal enveloping algebras}

Throughout this paper $\mathfrak{g}$ is supposed to be a symmetrizable
Kac-Moody Lie algebra over $\mathbb{C}$ unless otherwise specified.
We follow the notation in \cite{MR1357199}. However, to shorten notation,
we write $q^{h}$ instead of $q(h)$.
\begin{defn}
Let $\left(P,I,\left\{ \alpha_{i}\right\} _{i\in I},\left\{ h_{i}\right\} _{i\in I},\left(\;,\;\right)\right)$
be a root datum for $\mathfrak{g}$ and $q$ an indeterminate. Denote
by $\Uq:=\Uq(\mathfrak{g})$ the quantized universal enveloping algebra over $\mathbb{Q}(q)$ generated by $\{e_{i},f_{i},q^{h}\}_{i\in I,h\in P^{*}}$. Here $P^{*}:=\Hom_{\mathbb{Z}}(P, \mathbb{Z})$. Set the coproduct $\Delta\colon\Uq\to\Uq\otimes\Uq$, the antipode $S\colon\Uq\to\Uq$ and the counit $\varepsilon\colon\Uq\to\mathbb{Q}(q)$
as follows:
\begin{align*}
\Delta\left(e_{i}\right) & =e_{i}\otimes t_{i}^{-1}+1\otimes e_{i}, & S\left(e_{i}\right) & =-e_{i}t_{i}, & \varepsilon\left(e_{i}\right) & =0,\\
\Delta\left(f_{i}\right) & =f_{i}\otimes1+t_{i}\otimes f_{i}, & S\left(f_{i}\right) & =-t_{i}^{-1}f_{i}, & \varepsilon\left(f_{i}\right) & =0,\\
\Delta\left(q^{h}\right) & =q^{h}\otimes q^{h}, & S\left(q^{h}\right) & =q^{-h}, & \varepsilon\left(q^{h}\right) & =1.
\end{align*}
We denote by $\Uq^{+},\Uq^{-},\Uq^{0}$
the $\mathbb{Q}(q)$-subalgebras of $\Uq$ generated by
$\{e_{i}\}_{i\in I}$, $\{f_{i}\}_{i\in I}$, $\{q^{h}\}_{h\in P^{*}}$ respectively. We assume that there exist elements $\{\varpi_i\}_{i\in I}$ of $P$ such that $\langle \varpi_i, h_j\rangle=\delta_{ij}$ for all $i, j\in I$. Set $\rho:=\sum_{i\in I}\varpi_i$. Then $\langle\rho, h_{i}\rangle=1$ and $(\rho,\alpha_{i})=(\alpha_{i},\alpha_{i})/2$ for all $i\in I$.
\end{defn}

\begin{defn}
Let $\vee\colon\Uq\to\Uq$ be the $\mathbb{Q}(q)$-algebra
involution defined by
\begin{align*}
e_{i}^{\vee} & =f_{i}, &  & f_{i}^{\vee}=e_{i}, &  & \left(q^{h}\right)^{\vee}=q^{-h}.
\end{align*}
Let $\overline{\phantom{x}}\colon\mathbb{Q}(q)\to\mathbb{Q}(q)$, $\overline{\phantom{x}}\colon\Uq\to\Uq$
be the $\mathbb{Q}$-algebra involutions defined by
\begin{align*}
\overline{q}=q^{-1}, &  & \overline{e_{i}}=e_{i}, &  & \overline{f_{i}}=f_{i}, &  & \overline{q^{h}} & =q^{-h}.
\end{align*}
Let $*\colon\Uq\to\Uq$ , $\varphi\colon\Uq\to\Uq$
be the $\mathbb{Q}\left(q\right)$-algebra anti-involutions defined
by
\begin{align*}
*(e_{i}) & =e_{i}, & *(f_{i}) & =f_{i}, & *\left(q^{h}\right) & =q^{-h},\\
\varphi\left(e_{i}\right) & =f_{i}, & \varphi\left(f_{i}\right) & =e_{i}, & \varphi\left(q^{h}\right) & =q^{h}.
\end{align*}

\end{defn}

\subsection{Non-degenerate pairings and dual bar-involutions}\label{Non-degenerate pairings and dual bar-involutions}

\begin{defn}\label{d:qderiv}
For $i\in I$, we define the $\mathbb{Q}(q)$-linear maps $e'_{i}$
and $_{i}e'\colon\Uq^{-}\to\Uq^{-}$ by
\[
\begin{array}{lc}
e'_{i}\left(xy\right)=e'_{i}\left(x\right)y+q_{i}^{\langle\wt x,h_{i}\rangle}xe'_{i}\left(y\right), & e'_{i}(f_{j})=\delta_{ij},\\
_{i}e'\left(xy\right)=q_{i}^{\langle\wt y,h_{i}\rangle}{_{i}e'}\left(x\right)y+x\;{_{i}e'}\left(y\right), & _{i}e'(f_{j})=\delta_{ij}
\end{array}
\]
for homogeneous elements $x,y\in\Uq^{-}$. Here homogeneous
elements mean the elements $x$ of $\Uq$ satisfying $q^{h}xq^{-h}=q^{\langle\wt x,h\rangle}x$
for some $\wt x\in Q:=\sum_{i\in I}\mathbb{Z}\alpha_i$ and an arbitrary $h\in P^{*}$.
\end{defn}

\begin{defn}
There exists a unique symmetric $\mathbb{Q}(q)$-bilinear form $(\ ,\ )_{L}\colon\Uq^{-}\times\Uq^{-}\to\mathbb{Q}(q)$
such that
\begin{align*}
(1,1)_{L}=1 & , &  & (f_{i}x,y)_{L}=\frac{1}{1-q_{i}^{2}}(x,e'_{i}(y))_{L}, &  & (xf_{i},y)_{L}=\frac{1}{1-q_{i}^{2}}(x,{_{i}e'}(y))_{L}.
\end{align*}
This form $(\ ,\ )_{L}$ is non-degenerate and has the following property:
\[
\left(\ast(x),\ast(y)\right)_{L}=\left(x,y\right)_{L}
\]
for all $x,y\in\Uq^{-}$. See \cite[Chapter 1]{Lus:intro} for more
details.
\end{defn}

\begin{defn}
For a homogeneous $x\in\Uq^{-}$, we define $\sigma\left(x\right)=\sigma_{L}\left(x\right)\in\Uq^{-}$
by the following property: 
\[
\left(\sigma\left(x\right),y\right)_{L}=\overline{\left(x,\overline{y}\right)}_{L}
\]
for all $y\in\Uq^{-}$. By non-degeneracy of $(\;,\;)_{L}$,
the element $\sigma\left(x\right)$ is well-defined. This map $\sigma\colon\Uq^{-}\to\Uq^{-}$
is called the dual bar-involution.
\end{defn}
The following proposition can be proved in the same manner as \cite[Proposition 3.2]{MR2914878}.
\begin{prop}
\label{p:dualbar} For a homogeneous element $x\in\Uq^{-}$,
we have
\[
\sigma\left(x\right)=\left(-1\right)^{\height\left(\wt x\right)}q^{\left(\wt x,\wt x\right)/2-\left(\wt x,\rho\right)}\left(\overline{\phantom{x}}\circ*\right)\left(x\right).
\]
Here $\height\alpha:=\sum_{i\in I}m_{i}$ for $\alpha=\sum_{i\in I}m_{i}\alpha_{i}\in Q$.
In particular, for homogeneous elements $x,y\in\Uq^{-}$,
we have
\[
\sigma(xy)=q^{(\wt x,\wt y)}\sigma(y)\sigma(x).
\]

\end{prop}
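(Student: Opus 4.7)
The plan is to introduce the map defined by the right-hand side as a candidate and verify it agrees with $\sigma$ via the characterizing property; the multiplicative identity will then come as a bonus. Set $\psi := \overline{\phantom{x}}\circ *$, which is a $\mathbb{Q}$-algebra anti-involution of $\Uq^-$ fixing each $f_i$ (since $*$ is a $\mathbb{Q}(q)$-algebra anti-involution fixing $f_i$ and $\overline{\phantom{x}}$ is a $\mathbb{Q}$-algebra involution fixing $f_i$, and the two commute on generators). For homogeneous $x\in\Uq^-$, define
\[
\tau(x) := c(\wt x)\,\psi(x), \qquad c(\alpha) := (-1)^{\height\alpha}\,q^{(\alpha,\alpha)/2-(\alpha,\rho)}.
\]

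First I would carry out a direct calculation to check that $c(\alpha+\beta)=q^{(\alpha,\beta)}c(\alpha)c(\beta)$ using bilinearity of $(\,,\,)$ and additivity of $\height$. Combined with the anti-multiplicativity of $\psi$, this immediately yields $\tau(xy)=q^{(\wt x,\wt y)}\tau(y)\tau(x)$; once we know $\sigma=\tau$, the ``In particular'' clause follows. Second, I would verify $\sigma(f_i)=\tau(f_i)$ on generators: the defining recursion of $(\,,\,)_L$ gives $(f_i,f_j)_L=\delta_{ij}/(1-q_i^2)$, so from $\wt\sigma(f_i)=-\alpha_i$ we have $\sigma(f_i)=cf_i$ and evaluating against $f_i$ yields $c=-q_i^2$; on the other side, using $(\alpha_i,\rho)=(\alpha_i,\alpha_i)/2$, one computes $\tau(f_i)=(-1)^{-1}q^{(\alpha_i,\alpha_i)}f_i=-q_i^2f_i$.

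Third, to conclude $\sigma=\tau$, I would appeal to non-degeneracy of $(\,,\,)_L$ and establish
\[
(\tau(x),y)_L = \overline{(x,\overline{y})}_L
\]
for all $x,y\in\Uq^-$ by induction on $\height(-\wt x)$. The base case $x=1$ is immediate from the weight-grading of the pairing. For the inductive step, write $x=f_ix'$ and expand the left-hand side using the multiplicative rule from Step 1 together with the adjunction $(zf_i,y)_L=(1-q_i^2)^{-1}(z,{_ie'}(y))_L$; expand the right-hand side in parallel using $(f_ix',\overline{y})_L=(1-q_i^2)^{-1}(x',e'_i(\overline{y}))_L$. The induction hypothesis applied to $x'$ then reduces everything to a compatibility relating $\overline{{_ie'}(y)}$ with $e'_i(\overline{y})$, modified by $q$-powers depending on the weight of $y$, which is checked by a direct recursion on monomials in $f_j$ using the Leibniz rules of Definition \ref{d:qderiv}.

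The main obstacle is the last step: the bar involution does not commute with the twisted derivations $e'_i$ and ${_ie'}$ in an elementary way, and the correct $q$-power twist must be tracked carefully so that the two expansions match. Once this compatibility is in hand, non-degeneracy of $(\,,\,)_L$ forces $\tau=\sigma$, yielding the stated formula, and the multiplicative identity then follows from Step 1. This is exactly the pattern of \cite[Proposition 3.2]{MR2914878}, transported to the present setup.
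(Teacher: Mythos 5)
Your proposal is correct and is essentially the argument the paper delegates to \cite[Proposition 3.2]{MR2914878}: verify the candidate map on the generators $f_i$, prove $(\tau(x),y)_L=\overline{(x,\overline{y})}_L$ by induction using the adjunctions for $e'_i$ and $_ie'$, and invoke non-degeneracy; the compatibility you flag as the main obstacle is the genuine identity $e'_i(\overline{y})=q_i^{\langle \wt y+\alpha_i,h_i\rangle}\,\overline{{_ie'}(y)}$, which does follow by the Leibniz-rule induction you describe. No gaps.
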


\subsection{Lusztig's braid group symmetries}\label{Lusztig's braid group symmetries}

\begin{defn}
Let $W$ be the Weyl group of $\mathfrak{g}$ and $\left\{ s_{i}\right\} _{i\in I}$
be the set of simple reflections. For $w\in W$, denote by $I(w)$
the set of the reduced words of $w$.

Following Lusztig \cite[Section 37.1.3]{Lus:intro}, we define the
$\mathbb{Q}\left(q\right)$-algebra automorphism $T_{i}\colon\Uq\to\Uq$
for $i\in I$ by the following formulae: \begin{subequations}
\begin{align*}
T_{i}\left(q^{h}\right) & =q^{s_{i}\left(h\right)},\\
T_{i}\left(e_{j}\right) & =\begin{cases}
-f_{i}t_{i} & \text{for}\;j=i,\\
{\displaystyle \sum_{r+s=-\left\langle h_{i},\alpha_{j}\right\rangle }\left(-1\right)^{r}q_{i}^{-r}e_{i}^{\left(s\right)}e_{j}e_{i}^{\left(r\right)}} & \text{for}\;j\neq i,
\end{cases}\\
T_{i}\left(f_{j}\right) & =\begin{cases}
-t_{i}^{-1}e_{i} & \text{for}\;j=i,\\
{\displaystyle \sum_{r+s=-\left\langle h_{i},\alpha_{j}\right\rangle }\left(-1\right)^{r}q_{i}^{r}f_{i}^{\left(r\right)}f_{j}f_{i}^{\left(s\right)}} & \text{for}\;j\neq i.
\end{cases}
\end{align*}
\end{subequations} Its inverse map is given by \begin{subequations}
\begin{align*}
T_{i}^{-1}\left(q^{h}\right) & =q^{s_{i}\left(h\right)},\\
T_{i}^{-1}\left(e_{j}\right) & =\begin{cases}
-t_{i}^{-1}f_{i} & \text{for}\;j=i,\\
{\displaystyle \sum_{r+s=-\left\langle h_{i},\alpha_{j}\right\rangle }\left(-1\right)^{r}q_{i}^{-r}e_{i}^{\left(r\right)}e_{j}e_{i}^{\left(s\right)}} & \text{for}\;j\neq i,
\end{cases}\\
T_{i}^{-1}\left(f_{j}\right) & =\begin{cases}
-e_{i}t_{i} & \text{for}\;j=i,\\
{\displaystyle \sum_{r+s=-\left\langle h_{i},\alpha_{j}\right\rangle }\left(-1\right)^{r}q_{i}^{r}f_{i}^{\left(s\right)}f_{j}f_{i}^{\left(r\right)}} & \text{for}\;j\neq i.
\end{cases}
\end{align*}
\end{subequations} The maps $T_{i}$ and $T_{i}^{-1}$ are denoted
by $T''_{i,1}$ and $T'_{i,-1}$ respectively in \cite{Lus:intro}.

It is known that $\left\{ T_{i}\right\} _{i\in I}$ satisfies the
braid relations, that is, for $w\in W$, the $\mathbb{Q}\left(q\right)$-algebra
automorphism $T_{w}:=T_{i_{1}}\cdots T_{i_{\ell}}\colon\Uq\to\Uq$ does
not depend on the choice of $(i_{1},\dots,i_{\ell})\in I(w)$. See \cite[Chapter 39]{Lus:intro}.
\end{defn}

The following lemma follows from the straightforward check on the
generators of $\Uq$.
\begin{lem}
\label{l:T-antipode} For $i\in I$, we have $T_{i}\circ S\circ\vee=S\circ\vee\circ T_{i}^{-1}$.
\end{lem}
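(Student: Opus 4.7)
The plan is to reduce the claim to a verification on the algebra generators $\{q^h, e_j, f_j\}_{j\in I, h\in P^*}$ of $\Uq$. Both $T_i^{\pm 1}$ and $\vee$ are $\mathbb{Q}(q)$-algebra homomorphisms while $S$ is a $\mathbb{Q}(q)$-algebra anti-homomorphism, so both $T_i\circ S\circ \vee$ and $S\circ \vee\circ T_i^{-1}$ are $\mathbb{Q}(q)$-algebra anti-homomorphisms; hence it suffices to compare their values on each generator.

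For the torus part, both composites send $q^h$ to $q^{s_i(h)}$: the left-hand side yields $T_i(S(q^{-h})) = T_i(q^h) = q^{s_i(h)}$, and the right-hand side yields $S(\vee(q^{s_i(h)})) = S(q^{-s_i(h)}) = q^{s_i(h)}$. At $e_i$ and $f_i$ a direct computation using $T_i(e_i) = -f_i t_i$, $T_i(f_i) = -t_i^{-1}e_i$, $S(e_i) = -e_i t_i$, $S(f_i) = -t_i^{-1}f_i$, combined with $T_i(t_i) = t_i^{-1}$ (since $s_i(h_i) = -h_i$), shows that both sides evaluate to $e_i$ (respectively $f_i$). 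For $j\neq i$ one expands $T_i(f_j)$ on the left (arising from $T_i(S(\vee(e_j))) = T_i(S(f_j)) = -T_i(t_j^{-1})\,T_i(f_j)$) and $T_i^{-1}(e_j)$ on the right (arising inside $S\circ \vee\circ T_i^{-1}(e_j)$) using the Serre-like divided-power formulas of the definition. Applying $S$ reverses the order of the monomials and introduces Cartan factors, while $\vee$ exchanges $e_i^{(r)} \leftrightarrow f_i^{(r)}$ and $e_j \leftrightarrow f_j$. After commuting the accumulated $t_i$- and $t_j$-factors past the divided powers and reindexing the summation via $r\leftrightarrow s$ (these run over pairs with $r+s=-\langle h_i,\alpha_j\rangle$), the two expansions match term by term. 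The case $f_j$ ($j\neq i$) is handled by the parallel computation using the expansions of $T_i(e_j)$ and $T_i^{-1}(f_j)$.

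The main obstacle lies in this last step: one must carefully match the prefactors $(-1)^r q_i^{-r}$ from the $T_i$-expansion against $(-1)^r q_i^r$ from the $T_i^{-1}$-expansion, and verify that the Cartan-weight factors produced by $S$ (coming both from the leading $-t_j^{-1}$ and from $S$ acting on each $f_i^{(a)}$, which contribute weights $q_i^{-a(a-1)}$-style corrections after straightening) genuinely cancel against those appearing on the other side. This is a routine but somewhat tedious bookkeeping exercise, and once it is completed for $e_j$ the identity at $f_j$ follows by the symmetric calculation; together with the torus and $j=i$ checks this establishes the lemma.
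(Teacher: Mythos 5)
Your proposal is correct and follows essentially the same route as the paper, which simply asserts that the lemma "follows from the straightforward check on the generators of $\Uq$"; you carry out the reduction to generators (noting both composites are algebra anti-homomorphisms), verify the $q^h$ and $j=i$ cases explicitly, and correctly identify the remaining $j\neq i$ case as a routine divided-power bookkeeping check. No substantive difference from the paper's argument.
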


We have the following invariance of the bilinear form $(\ ,\ )_{L}$
under the braid group symmetry $T_{i}$.
\begin{prop}[{{{\cite[Proposition 38.1.6, Proposition 38.2.1]{Lus:intro}}}}]
\label{p:compatibility}

\textup{(1)} For $i\in I$, we have $\Ker e'_{i}=\Uq^{-}\cap T_{i}\Uq^{-}.$

\textup{(2)} For $i\in I$ and $x,y\in\Ker e'_{i}$, we have $\left(x,y\right)_{L}=\left(T_{i}^{-1}(x),T_{i}^{-1}(y)\right)_{L}$.
\end{prop}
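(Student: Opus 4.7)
The plan is to follow the approach of \cite[Chapter 38]{Lus:intro}. Two preliminary observations underlie both parts: $\Ker e'_{i}$ is a subalgebra of $\Uq^{-}$ by the twisted Leibniz rule of Definition \ref{d:qderiv}; and the adjointness relation $(f_{i}x,y)_{L} = (1-q_{i}^{2})^{-1}(x, e'_{i}(y))_{L}$ together with the non-degeneracy of $(\,,\,)_{L}$ yields the orthogonal decomposition $\Uq^{-} = \Ker e'_{i} \oplus f_{i}\Uq^{-}$.

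For (1), I would first verify $e'_{i}(T_{i}(f_{j})) = 0$ for $j \neq i$ by direct expansion of the defining formula
\[
T_{i}(f_{j}) = \sum_{r+s = -\langle h_{i}, \alpha_{j}\rangle} (-1)^{r} q_{i}^{r}\, f_{i}^{(r)} f_{j} f_{i}^{(s)},
\]
using the twisted derivation rule and $e'_{i}(f_{i}^{(m)}) = f_{i}^{(m-1)}$; the resulting sum telescopes via a $q$-Pascal identity. Since $\Ker e'_{i}$ is a subalgebra and $T_{i}(f_{i}) = -t_{i}^{-1} e_{i} \notin \Uq^{-}$, this already shows $\Uq^{-} \cap T_{i}\Uq^{-} \subseteq \Ker e'_{i}$. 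For the reverse inclusion, I would argue by a graded-dimension comparison: from the orthogonal decomposition one extracts the Hilbert series of $\Ker e'_{i}$ weight by weight, and the right-hand side can be computed from a PBW-type basis of $T_{i}\Uq^{-}$ obtained by applying $T_{i}$ to a PBW basis of $\Uq^{-}$ and selecting those terms that do not involve the factor $T_{i}(f_{i})$.

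For (2), I would induct on the weight of $x$. The inductive step is carried out by writing $x \in \Ker e'_{i}$ as a sum of products $T_{i}(f_{j})\, x_{0}$ with $j \neq i$ and $x_{0} \in \Ker e'_{i}$ of strictly lower weight---available through (1) and the preceding PBW analysis---and then applying the adjointness recursion on both sides of the desired identity $(x, y)_{L} = (T_{i}^{-1}(x), T_{i}^{-1}(y))_{L}$. The multiplicativity of $T_{i}^{-1}$ and the recursion reduce the inductive step to comparing $e'_{j}(y)$ with a suitable skew-derivation of $T_{i}^{-1}(y)$ (for $y \in \Ker e'_{i}$), which can be handled via the commutation of the $T_{i}$'s with the $e'_{j}$'s up to scalar. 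The main obstacle will be the reverse inclusion in (1): a direct construction of preimages under $T_{i}$ is infeasible, so the graded-dimension argument, and in particular the control over a PBW basis of $T_{i}\Uq^{-}$ compatible with the weight grading, is the key technical ingredient underlying both parts.
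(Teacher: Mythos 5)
The paper offers no proof of this proposition --- it is imported verbatim from Lusztig (Propositions 38.1.6 and 38.2.1 of the cited book) --- so the comparison can only be with Lusztig's argument, whose general shape (the kernel is a subalgebra, the orthogonal decomposition $\Uq^{-}=\Ker e'_{i}\oplus f_{i}\Uq^{-}$, explicit checks on generators, induction on weight) you have correctly identified. But your sketch has genuine gaps exactly where the work happens. First, the inclusion $\Uq^{-}\cap T_{i}\Uq^{-}\subseteq\Ker e'_{i}$ does \emph{not} follow from $e'_{i}(T_{i}(f_{j}))=0$ for $j\neq i$ together with ``$T_{i}(f_{i})\notin\Uq^{-}$'': an element $u\in\Uq^{-}$ with $T_{i}(u)\in\Uq^{-}$ need not be a polynomial in the $f_{j}$, $j\neq i$. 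For instance $f_{j}=T_{i}\bigl(T_{i}^{-1}(f_{j})\bigr)$ lies in $\Uq^{-}\cap T_{i}\Uq^{-}$ although $T_{i}^{-1}(f_{j})$ involves $f_{i}$; expanding $T_{i}$ of such an element produces monomials containing the factor $-t_{i}^{-1}e_{i}$ which cancel only in the aggregate. The correct argument writes $u$ against the decomposition of $\Uq^{-}$ by the kernel of a skew-derivation and uses the triangular decomposition of $\Uq$ to force the $e_{i}$-component of $T_{i}(u)$ to vanish. Second, the graded-dimension comparison via ``a PBW basis of $T_{i}\Uq^{-}$, selecting the terms not involving $T_{i}(f_{i})$'' is not available here: the paper works over an arbitrary symmetrizable Kac--Moody algebra, where PBW bases exist only for the subalgebras $\Uq^{-}(w)$, and the proposed selection of terms presupposes the statement being proved. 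Lusztig instead shows directly, by induction on the height of the weight using the interaction of $T_{i}$ with the skew-derivations, that $T_{i}^{-1}$ maps $\Ker e'_{i}$ into $\Uq^{-}$. (Incidentally, with the paper's conventions $e'_{i}(f_{i}^{(m)})=q_{i}^{-(m-1)}f_{i}^{(m-1)}$, not $f_{i}^{(m-1)}$.)

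For part (2), the inductive step as described would fail: $\Ker e'_{i}$ is not generated as an algebra by the elements $T_{i}(f_{j})$, $j\neq i$. Already in type $A_{2}$ with $i=1$ the element $f_{2}$ lies in $\Ker e'_{1}$, but its weight $-\alpha_{2}$ is not a sum of the weights $-s_{1}\alpha_{j}$ of the proposed generators, so you cannot reduce to products $T_{i}(f_{j})\,x_{0}$. A workable induction uses instead the defining recursion of $(\ ,\ )_{L}$ with respect to left multiplication by $f_{j}$, combined with explicit identities expressing $e'_{j}\bigl(T_{i}^{-1}(x)\bigr)$ through $T_{i}^{-1}$ applied to combinations of $e'_{j}(x)$ and ${}_{j}e'(x)$ for $x\in\Ker e'_{i}$. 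Your final sentence gestures at such a compatibility, but that compatibility is the entire content of the proof rather than a detail to be ``handled via the commutation of the $T_{i}$'s with the $e'_{j}$'s''; as written the proposal is a plausible outline of Lusztig's strategy, not a proof.
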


\subsection{Quantum nilpotent subalgebras and Poincar\'e-Birkhoff-Witt type bases}\label{Quantum nilpotent subalgebras and Poincare-Birkhoff-Witt bases}
\begin{defn}
(1) For $w\in W$, we set $\Uq^{-}\left(w\right)=\Uq^{-}\cap T_{w}\left(\Uq^{+}\Uq^{0}\right)$. These subalgebras of $\Uq^-$ are called quantum nilpotent subalgebras.

(2) Let $w\in W$ and $\bm{i}=\left(i_{1},\cdots,i_{\ell}\right)\in I\left(w\right)$.
For $\bm{c}=\left(c_{1},\cdots,c_{\ell}\right)\in\mathbb{Z}_{\geq0}^{\ell}$,
we set
\begin{align*}
F^{{\rm low}}\left(\bm{c},\bm{i}\right) & :=f_{i_{1}}^{\left(c_{1}\right)}T_{i_{1}}\left(f_{i_{2}}^{\left(c_{2}\right)}\right)\cdots\left(T_{i_{1}}\cdots T_{i_{\ell-1}}\right)\left(f_{i_{\ell}}^{\left(c_{\ell}\right)}\right),\\
F^{\mathrm{up}}\left(\bm{c},\bm{i}\right) & :=F^{{\rm low}}\left(\bm{c},\bm{i}\right)/\left(F^{{\rm low}}\left(\bm{c},\bm{i}\right),F^{{\rm low}}\left(\bm{c},\bm{i}\right)\right)_{L}.
\end{align*}
\end{defn}
\begin{prop}[{\cite[Proposition 38.2.3]{Lus:intro}, \cite[Proposition 2.3]{MR1712630}}]
\textup{(1)} $F^{{\rm low}}\left(\bm{c},\bm{i}\right)\in\Uq^{-}\left(w\right)$
for $\bm{c}\in\mathbb{Z}_{\geq0}^{\ell}$ and $\{F^{{\rm low}}\left(\bm{c},\bm{i}\right)\}_{\bm{c}\in\mathbb{Z}_{\geq0}^{\ell}}$
forms a basis of $\Uq^{-}\left(w\right)$.

\textup{(2)} $\{F^{{\rm low}}\left(\bm{c},\bm{i}\right)\}_{\bm{c}\in\mathbb{Z}_{\geq0}^{\ell}}$
is an orthogonal basis of $\Uq^{-}\left(w\right)$, more precisely, we have
\begin{align}
(F^{{\rm low}}\left(\bm{c},\bm{i}\right),F^{{\rm low}}\left(\bm{c}',\bm{i}\right))_{L}=\delta_{\bm{c},\bm{c}'}\prod_{k=1}^{\ell}\prod_{j=1}^{c_{k}}(1-q_{i_{k}}^{2j})^{-1} & .\label{norm}
\end{align}

\end{prop}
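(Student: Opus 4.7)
The plan is to induct on $\ell = \ell(w)$, peeling off $s_{i_1}$ from the left of the reduced word. The base case $\ell = 0$ is trivial. For the inductive step, set $w' := s_{i_1}w$ with reduced word $\bm{i}' := (i_2, \ldots, i_\ell)$ and $\bm{c}' := (c_2, \ldots, c_\ell)$. The starting observation is the recursive factorization
\[
F^{\mathrm{low}}(\bm{c}, \bm{i}) = f_{i_1}^{(c_1)} \cdot T_{i_1}\bigl(F^{\mathrm{low}}(\bm{c}', \bm{i}')\bigr),
\]
which reduces both claims for $w$ to analyzing one application of $T_{i_1}$ to the data supplied by the induction hypothesis for $w'$.

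For (1), the main input is that $T_{i_1}(\Uq^-(w')) \subseteq \Uq^-$. This I would verify factor by factor: each $T_{i_1}T_{i_2}\cdots T_{i_{k-1}}(f_{i_k}^{(c_k)})$ is the divided-power quantum root vector for the positive real root $\beta_k := s_{i_1}\cdots s_{i_{k-1}}(\alpha_{i_k})$, and its membership in $\Uq^-$ is a standard consequence of $\bm{i}$ being reduced (so that each $\beta_k$ is indeed positive). Multiplying by $f_{i_1}^{(c_1)} \in \Uq^-$ then yields $F^{\mathrm{low}}(\bm{c}, \bm{i}) \in \Uq^-$. For membership in $T_w(\Uq^+\Uq^0)$, I would use $T_w = T_{i_1}T_{w'}$ together with $f_{i_1} = T_{i_1}(-t_{i_1}^{-1}e_{i_1}) \in T_{i_1}(\Uq^+\Uq^0)$ and the inductive assumption $F^{\mathrm{low}}(\bm{c}', \bm{i}') \in T_{w'}(\Uq^+\Uq^0)$. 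The spanning part of (1) will follow once (2) is proved, via linear independence together with a standard graded-dimension comparison.

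For (2), the crucial point is that $T_{i_1}(F^{\mathrm{low}}(\bm{c}', \bm{i}'))$, once known to lie in $\Uq^-$, automatically lies in $\Ker(e'_{i_1}) = \Uq^- \cap T_{i_1}(\Uq^-)$ by Proposition \ref{p:compatibility}(1). Hence Proposition \ref{p:compatibility}(2) applies and, combined with the induction hypothesis, gives
\[
\bigl(T_{i_1}(F^{\mathrm{low}}(\bm{c}', \bm{i}')),\, T_{i_1}(F^{\mathrm{low}}(\tilde{\bm{c}}', \bm{i}'))\bigr)_L = \delta_{\bm{c}', \tilde{\bm{c}}'} \prod_{k=2}^{\ell}\prod_{j=1}^{c_k}(1-q_{i_k}^{2j})^{-1}.
\]
To absorb the leading $f_{i_1}^{(c_1)}$ factor, I would iterate the defining identity $(f_{i_1}z, w)_L = (1-q_{i_1}^2)^{-1}(z, e'_{i_1}(w))_L$ exactly $c_1$ times. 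Using the $q$-Leibniz rule of Definition \ref{d:qderiv} together with $e'_{i_1}(T_{i_1}(F^{\mathrm{low}}(\tilde{\bm{c}}', \bm{i}'))) = 0$, only the derivatives landing on the $f_{i_1}^{(c'_1)}$ factor contribute, producing the factor $\delta_{c_1, c'_1}\prod_{j=1}^{c_1}(1-q_{i_1}^{2j})^{-1}$ and completing the orthogonality formula.

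The main technical obstacle is the initial root-positivity step, namely $T_{i_1}\cdots T_{i_{k-1}}(f_{i_k}^{(c_k)}) \in \Uq^-$ for reduced $(i_1, \ldots, i_k)$. This is where the reducedness of $\bm{i}$ truly enters, and it underlies the entire existence of PBW bases; the remainder of the argument is then a bookkeeping exercise in the $q$-derivation identities and the factors $(1-q_{i_k}^{2j})^{-1}$.
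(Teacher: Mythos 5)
The paper does not prove this proposition: it is imported verbatim from Lusztig and Beck--Chari--Pressley, so there is no in-paper argument to compare against. Your induction on $\ell$, based on the factorization $F^{\mathrm{low}}(\bm{c},\bm{i})=f_{i_1}^{(c_1)}\,T_{i_1}\bigl(F^{\mathrm{low}}(\bm{c}',\bm{i}')\bigr)$ and on feeding Proposition \ref{p:compatibility} into the pairing, is exactly the strategy of the cited sources, and the portions devoted to part (2) and to $F^{\mathrm{low}}(\bm{c},\bm{i})\in\Uq^{-}$ are sound, granting (as you explicitly do) the root-positivity input $T_{i_1}\cdots T_{i_{k-1}}(f_{i_k})\in\Uq^{-}$ for reduced words.

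Two steps do not close as written. The lesser one: to get $F^{\mathrm{low}}(\bm{c},\bm{i})\in T_w(\Uq^{+}\Uq^{0})$ you multiply $f_{i_1}^{(c_1)}\in T_{i_1}(\Uq^{+}\Uq^{0})$ by $T_{i_1}(F^{\mathrm{low}}(\bm{c}',\bm{i}'))\in T_w(\Uq^{+}\Uq^{0})$, but $T_{i_1}(\Uq^{+}\Uq^{0})\cdot T_w(\Uq^{+}\Uq^{0})$ is not contained in $T_w(\Uq^{+}\Uq^{0})$ in general; what you need is that $f_{i_1}^{(c_1)}$ itself lies in the subalgebra $T_w(\Uq^{+}\Uq^{0})$, i.e.\ that $T_{i_\ell}^{-1}\cdots T_{i_2}^{-1}(e_{i_1})\in\Uq^{+}$ --- one more application of the positivity lemma, to the full reduced word rather than to $T_{i_1}$ alone. (Also, the correct identity is $f_i=T_i(-e_it_i)$, not $T_i(-t_i^{-1}e_i)$, though this does not affect the conclusion.) The serious gap is the spanning half of (1). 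A ``graded-dimension comparison'' presupposes that the dimension of $\bigl(\Uq^{-}\cap T_w(\Uq^{+}\Uq^{0})\bigr)_{-\nu}$ equals the number of $\bm{c}$ with $\sum_k c_k\beta_k=\nu$; that character formula is precisely the content of the PBW theorem for $\Uq^{-}(w)$ and is not independently available (deducing it from the classical PBW theorem for $U(\mathfrak{n}^{-}(w))$ would require an integral form and a flatness argument you have not set up). The standard proofs establish spanning by a genuinely different induction, roughly by combining the decomposition $\Uq^{-}=\bigoplus_{n\geq 0}f_{i_1}^{(n)}\Ker e'_{i_1}$ with the identification $\Uq^{-}(w)\cap\Ker e'_{i_1}=T_{i_1}\bigl(\Uq^{-}(s_{i_1}w)\bigr)$. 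As it stands, part (1) is only half proved.
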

In particular, $\{F^{{\rm up}}\left(\bm{c},\bm{i}\right)\}_{\bm{c}\in\mathbb{Z}_{\geq0}^{\ell}}$
is also a basis of $\Uq^{-}\left(w\right)$. The basis $\{F^{{\rm low}}\left(\bm{c},\bm{i}\right)\}_{\bm{c}}$ is called the (lower) Poincar\'e-Birkhoff-Witt type basis associated with $\bm{i}\in I\left(w\right)$, and the basis $\{F^{{\rm up}}\left(\bm{c},\bm{i}\right)\}_{\bm{c}}$ is called the dual (or upper) Poincar\'e-Birkhoff-Witt type basis.

\subsection{Dual canonical bases and quantum nilpotent subalgebras}\label{Dual canonical bases and quantum nilpotent subalgebras}
Set $\mathcal{A}:=\mathbb{Z}[q^{\pm1}]$. Denote by $_{\mathcal{A}}\Uq^{-}$
the $\mathcal{A}$-subalgebra of $\Uq^{-}$ generated by the elements
$\{f_{i}^{(n)}\}_{i\in I,n\in\mathbb{Z}_{\geq0}}$. Lusztig \cite{MR1035415,Lus:intro}
and Kashiwara \cite{MR1115118} have proved that $_{\mathcal{A}}\Uq^{-}$
is a free $\mathcal{A}$-module and constructed the basis $\mathbf{B}^{\mathrm{low}}$
of $\Uq^{-}$, called the canonical basis, which is also an $\mathcal{A}$-basis
of $_{\mathcal{A}}\Uq^{-}$. Moreover the elements of $\mathbf{B}^{\mathrm{low}}$
are parametrized by the Kashiwara crystal $\mathscr{B}(\infty)$.
We follow the notation in \cite{MR1357199} concerning the crystal $(\mathscr{B}(\infty); \wt, \{\tilde{e}_i\}_{i\in I}, \{\tilde{f}_i\}_{i\in I}, \{\varepsilon_i\}_{i\in I}, \{\varphi_i\}_{i\in I})$ and the $\ast$-crystal $(\mathscr{B}(\infty); \wt, \{\tilde{e}_i^{\ast}\}_{i\in I}, \{\tilde{f}_i^{\ast}\}_{i\in I}, \{\varepsilon_i^{\ast}\}_{i\in I}, \{\varphi_i^{\ast}\}_{i\in I})$. Write $\mathbf{B}^{\mathrm{low}}=\{\Glow(b)\}_{b\in\mathscr{B}(\infty)}$.
We have $\overline{\Glow(b)}=\Glow(b)$ for all $b\in\mathscr{B}(\infty)$ \cite[Lemma 7.3.4]{MR1115118}.

Denote by $\mathbf{B}^{\mathrm{up}}$ the basis of $\Uq^{-}$ dual
to $\mathbf{B}^{\mathrm{low}}$ with respect to the bilinear form
$(\ ,\ )_{L}$, that is, $\mathbf{B}^{\mathrm{up}}=\{\Gup(b)\}_{b\in\mathscr{B}(\infty)}$
such that
\[
(\Glow(b),\Gup(b'))_{L}=\delta_{b,b'}
\]
for any $b,b'\in B(\infty)$. Hence we have $\sigma(\Gup(b))=\Gup(b)$
for all $b\in\mathscr{B}(\infty)$. It is known that the dual canonical basis $\mathbf{B}^{\mathrm{up}}$ is compatible with the quantum nilpotent subalgebras as follows. 
\begin{prop}[{{\cite[Theorem 4.25, Theorem 4.29]{MR2914878}}}]
\label{p:dualcanonical} Let $w\in W$ and $\bm{i}\in I\left(w\right)$.

\textup{\textup{(1)}} $\Uq^{-}(w)\cap\mathbf{B}^{\mathrm{up}}$ is
a basis of $\Uq^{-}(w)$.

\textup{\textup{(2)}} each element $\Gup(b)$ of $\Uq^{-}(w)\cap\mathbf{B}^{\mathrm{up}}$ is characterized by the following conditions:
\begin{enumerate}
\item[(DCB1)] $\sigma(\Gup(b))=\Gup(b)$, and
\item[(DCB2)] $\Gup\left(b\right)=F^{{\rm {up}}}\left(\bm{c},\bm{i}\right)+\sum_{\bm{c}'<\bm{c}}d_{\bm{c},\bm{c}'}^{\bm{i}}F^{{\rm {up}}}\left(\bm{c}',\bm{i}\right)$
with $d_{\bm{c},\bm{c}'}^{\bm{i}}\in q\mathbb{Z}[q]$ for a unique
$\bm{c}\in\mathbb{Z}_{\geq0}^{\ell}$.
\end{enumerate}
Here $<$ denotes the left lexicographic order on $\mathbb{Z}_{\geq0}^{\ell}$, that is, we write $(c_1,\dots, c_{\ell})<(c'_1,\dots, c'_{\ell})$ if and only if there exists $k\in \{1,\dots, \ell\}$ such that $c_1=c'_1,\dots, c_{k-1}=c'_{k-1}$ and $c_k<c'_k$.
\end{prop}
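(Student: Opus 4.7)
The plan is to construct a distinguished $\sigma$-invariant basis of $\Uq^-(w)$ from the dual PBW basis, and then show it coincides with $\Uq^-(w) \cap \mathbf{B}^{\up}$. The overall strategy follows the familiar two-step pattern behind the construction of canonical bases: first establish a unitriangular transition between $\{F^{\up}(\bm{c},\bm{i})\}_{\bm{c}}$ and $\{\sigma(F^{\up}(\bm{c},\bm{i}))\}_{\bm{c}}$ with respect to the left lexicographic order, then invoke a Kashiwara--Lusztig-type lemma to extract the unique $\sigma$-fixed lift and identify it at $q=0$ with the crystal-parametrized dual canonical basis.

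\textbf{Step 1 ($\sigma$-triangularity of the dual PBW basis).} The orthogonality relation \eqref{norm} shows that $\{F^{\up}(\bm{c},\bm{i})\}_{\bm{c}}$ is a $\mathbb{Q}(q)$-basis of $\Uq^-(w)$, and I would let $\mathcal{L}(\bm{i}) := \bigoplus_{\bm{c}} \mathcal{A}\, F^{\up}(\bm{c},\bm{i})$. The crucial claim is
\[
\sigma\bigl(F^{\up}(\bm{c},\bm{i})\bigr) = F^{\up}(\bm{c},\bm{i}) + \sum_{\bm{c}' < \bm{c}} \alpha^{\bm{i}}_{\bm{c},\bm{c}'}\, F^{\up}(\bm{c}',\bm{i}), \qquad \alpha^{\bm{i}}_{\bm{c},\bm{c}'} \in \mathcal{A}.
\]
I would prove this by induction on $\ell(w)$. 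Writing $\bm{i} = (i_1,\bm{i}')$ and using the factorization $F^{\low}(\bm{c},\bm{i}) = f_{i_1}^{(c_1)}\,T_{i_1}\bigl(F^{\low}(\bm{c}^{\flat},\bm{i}')\bigr)$ with $\bm{c}^{\flat} = (c_2,\ldots,c_\ell)$, the transport formula $\sigma(xy) = q^{(\wt x,\wt y)}\sigma(y)\sigma(x)$ from Proposition \ref{p:dualbar}, and the $T_{i_1}$-invariance of $(\,,\,)_L$ on $\Ker e'_{i_1}$ from Proposition \ref{p:compatibility}, one reduces the triangularity for $\Uq^-(w)$ to the inductive hypothesis on $\Uq^-(w_1)$ for $w_1 = s_{i_1}w$.

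\textbf{Step 2 (Extraction and identification).} Given Step 1, a standard bar-involution argument (cf.\ \cite[Lemma 24.2.1]{Lus:intro}) produces, for each $\bm{c}$, a unique $E(\bm{c},\bm{i}) \in \mathcal{L}(\bm{i})$ with $\sigma(E(\bm{c},\bm{i})) = E(\bm{c},\bm{i})$ and
\[
E(\bm{c},\bm{i}) - F^{\up}(\bm{c},\bm{i}) \in \sum_{\bm{c}'<\bm{c}} q\mathbb{Z}[q]\,F^{\up}(\bm{c}',\bm{i}).
\]
To match $E(\bm{c},\bm{i})$ with an element of $\mathbf{B}^{\up}$, I would pass to $q = 0$ and use the Lusztig--Kashiwara identification of PBW parameters with crystal elements $b(\bm{c},\bm{i}) \in \mathscr{B}(\infty)$: the element $\Gup(b(\bm{c},\bm{i}))$ is $\sigma$-fixed by definition, lies in $\Uq^-(w)$ thanks to the $\ast$-crystal characterization of the subcrystal underlying $\Uq^-(w)$ (via Saito's crystal reflections), and shares the same leading PBW term. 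Uniqueness in the lemma then forces $E(\bm{c},\bm{i}) = \Gup(b(\bm{c},\bm{i}))$; exhaustion over $\bm{c}$ gives (1), while the construction immediately yields the characterization (DCB1)--(DCB2) of (2).

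\textbf{Main obstacle.} The principal difficulty is Step 1, namely controlling $\sigma$ on the dual PBW basis with respect to the \emph{left} lexicographic order. Because $\sigma$ reverses products up to a $q$-power, it naturally intertwines the PBW basis attached to $\bm{i}$ with the one attached to the reversed word, so obtaining honest unitriangularity in the left lex order (rather than in a reverse order adapted to $\sigma$) requires carefully tracking how the leading factor $f_{i_1}^{(c_1)}$ and the $T_{i_1}$-twisted tail interact after $\sigma$ is applied, and then rearranging them back using the straightening relations inside $\Uq^-(w)$. This left-versus-reverse asymmetry is precisely the phenomenon that the quantum twist maps of the present paper are designed to exchange, producing the ``reverse'' lex counterpart of (DCB2) as Corollary \ref{c:right-lex}.
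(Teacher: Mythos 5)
The paper does not prove this proposition at all: it is imported verbatim from \cite[Theorems 4.25 and 4.29]{MR2914878}, so there is no internal argument to compare yours against. Your outline is, in substance, the strategy of that cited reference (and of Lusztig's original construction): unitriangularity of the dual bar-involution $\sigma$ on the dual PBW basis in the left lexicographic order, the standard extraction lemma for the unique $\sigma$-fixed lift, and identification at $q=0$ via Saito's crystal reflections. As a roadmap it is sound, and your ``main obstacle'' paragraph correctly locates the hard point of Step 1; the usual resolution is not the word-length induction you sketch alone but the Levendorski\u{\i}--So\u{\i}belman straightening relations, which guarantee that reordering the $\sigma$-reversed product produces only terms supported on intermediate indices, hence strictly smaller in left lex order, with leading coefficient exactly $1$ after the $q$-powers from $\sigma(xy)=q^{(\wt x,\wt y)}\sigma(y)\sigma(x)$ cancel against the normalization of $F^{\up}$.

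The one place where your sketch risks circularity is Step 2. The uniqueness lemma identifies $E(\bm{c},\bm{i})$ with $\Gup(b(\bm{c},\bm{i}))$ only if you already know that $\Gup(b(\bm{c},\bm{i}))$ lies in the lattice $\mathcal{L}(\bm{i})=\bigoplus_{\bm{c}}\mathcal{A}F^{\up}(\bm{c},\bm{i})$ and is congruent to $F^{\up}(\bm{c},\bm{i})$ modulo $q\mathcal{L}(\bm{i})$ --- i.e.\ that the dual canonical basis element actually sits inside $\Uq^-(w)$ with the right leading term. ``Shares the same leading PBW term'' is precisely the content being proved, not an input. In \cite{MR2914878} this is handled by first establishing the compatibility of the (dual) canonical basis with the crystal lattice of $\Uq^-(w)$ and with the induced bases under the Saito reflections $T_i^{-1}\Gup(b)=\Gup(\sigma_i^*b)$ (the paper's Proposition \ref{p:Saito_refl}), and only then comparing leading terms. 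You should make that intermediate compatibility statement explicit rather than folding it into ``the $\ast$-crystal characterization of the subcrystal,'' since without it the exhaustion argument in your final sentence does not close.
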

\begin{defn}\label{d:PBWparam}
Proposition \ref{p:dualcanonical} (2) says that $F^{{\rm {up}}}\left(\bm{c},\bm{i}\right)$ determines a unique dual canonical basis element $\Gup(b)$ in $\Uq^{-}(w)$. We write the corresponding element of $\mathscr{B}(\infty)$ as $b\left(\bm{c},\bm{i}\right)$. Then $\Uq^{-}(w)\cap\mathbf{B}^{\mathrm{up}}=\{\Gup(b(\bm{c},\bm{i}))\}_{\bm{c}\in\mathbb{Z}_{\geq0}^{\ell}}$.
\end{defn}
\begin{rem}
\label{r:dualcanonical} The unitriangular property (DCB2) in Proposition
\ref{p:dualcanonical} is equivalent to the following unitriangular
property:
\begin{align*}
F^{\mathrm{up}}\left(\bm{c},\bm{i}\right)=\sum\nolimits _{\bm{c}'\in\mathbb{Z}_{\geq0}^{\ell}}\left[F^{\mathrm{up}}\left(\bm{c},\bm{i}\right)\colon G^{{\rm up}}\left(b\left(\bm{c}',\bm{i}\right)\right)\right]G^{{\rm up}}\left(b\left(\bm{c}',\bm{i}\right)\right)\;\text{with}\\
\;\left[F^{\mathrm{up}}\left(\bm{c},\bm{i}\right)\colon G^{{\rm up}}\left(b\left(\bm{c}',\bm{i}\right)\right)\right]\begin{cases}
\in\delta_{\bm{c}',\bm{c}}+(1-\delta_{\bm{c}',\bm{c}})q\mathbb{Z}[q] & \text{if}\;\bm{c}'\leq\bm{c}\\
=0 & \text{otherwise.}
\end{cases}
\end{align*}
In fact, these unitriangular properties also hold when we consider
the right lexicographic order on $\mathbb{Z}_{\geq0}^{\ell}$. See
Corollary \ref{c:right-lex} below.
\end{rem}
\begin{rem}
\label{r:characterize} Note that the element $\Gup\left(b\left(\bm{c},\bm{i}\right)\right)$
is already characterized by the property (DCB1) in Proposition \ref{p:dualcanonical}
and the following property:
\begin{enumerate}
\item[(DCB2)$'$] $\Gup\left(b\left(\bm{c},\bm{i}\right)\right)-F^{{\rm {up}}}\left(\bm{c},\bm{i}\right)\in\sum_{\bm{c}'\in\mathbb{Z}_{\geq0}^{\ell}}q\mathbb{Z}[q]F^{{\rm {up}}}\left(\bm{c}',\bm{i}\right)$.
\end{enumerate}
Indeed, if an element $x\in \Uq^{-}(w)$ satisfies (DCB2)$'$, that is, $x-F^{{\rm {up}}}\left(\bm{c},\bm{i}\right)\in\sum_{\bm{c}'\in\mathbb{Z}_{\geq0}^{\ell}}q\mathbb{Z}[q]F^{{\rm {up}}}\left(\bm{c}',\bm{i}\right)$, we have 
\[
x\in \Gup\left(b\left(\bm{c},\bm{i}\right)\right)+\sum\nolimits_{\bm{c}'\in\mathbb{Z}_{\geq0}^{\ell}}q\mathbb{Z}[q]\Gup\left(b\left(\bm{c}',\bm{i}\right)\right)
\]
by Remark \ref{r:dualcanonical}. Hence $x$ is equal to $\Gup\left(b\left(\bm{c},\bm{i}\right)\right)$ if $x$ also satisfies (DCB1) because it is the unique $\sigma$-invariant element in $\Gup\left(b\left(\bm{c},\bm{i}\right)\right)+\sum\nolimits_{\bm{c}'\in\mathbb{Z}_{\geq0}^{\ell}}q\mathbb{Z}[q]\Gup\left(b\left(\bm{c}',\bm{i}\right)\right)$. 
\end{rem}
\begin{prop}[{\cite[Proposition 4.26]{MR2914878}, \cite[Proposition 7.4]{GLS:qcluster}}]
\label{p:rootdual} For $k=1,\dots,\ell$, we set $\bm{c}_{k}:=\left(\delta_{jk}\right)_{1\leq j\leq \ell}\in \mathbb{Z}_{\geq0}^{\ell}$. 
Then we have $\Gup(b(\bm{c}_{k},\bm{i}))=F^{{\rm {up}}}\left(\bm{c}_{k},\bm{i}\right)$.
\end{prop}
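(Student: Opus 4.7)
The plan is to compute $F^{\mathrm{up}}(\bm{c}_k, \bm{i})$ in closed form and then identify it with $\Gup(b(\bm{c}_k, \bm{i}))$ via Remark \ref{r:characterize}. Since only the $k$-th component of $\bm{c}_k$ is nonzero, the definition of $F^{\mathrm{low}}$ collapses to $F^{\mathrm{low}}(\bm{c}_k, \bm{i}) = T_{i_1}\cdots T_{i_{k-1}}(f_{i_k})$, and the norm formula (\ref{norm}) evaluates to $(1-q_{i_k}^2)^{-1}$. Hence
$$F^{\mathrm{up}}(\bm{c}_k, \bm{i}) = (1-q_{i_k}^2)\, T_{i_1}\cdots T_{i_{k-1}}(f_{i_k}).$$
Condition (DCB2)$'$ of Remark \ref{r:characterize} is then trivially satisfied for this element (the required difference is zero), so it suffices to verify the $\sigma$-invariance condition (DCB1).

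I would prove $\sigma$-invariance by induction on $\ell$, uniformly over all $k \leq \ell$. For the base $k=1$, Proposition \ref{p:dualbar} applied to $f_{i_1}$ gives $\sigma(f_{i_1}) = -q_{i_1}^2 f_{i_1}$, from which a direct calculation yields $\sigma((1-q_{i_1}^2)f_{i_1}) = (1-q_{i_1}^{-2})(-q_{i_1}^2)f_{i_1} = (1-q_{i_1}^2) f_{i_1}$. For $k \geq 2$, set $\bm{i}' := (i_2, \ldots, i_\ell) \in I(s_{i_1}w)$; the explicit formula above, applied to both $\bm{i}$ and $\bm{i}'$, gives
$$F^{\mathrm{up}}(\bm{c}_k, \bm{i}) = T_{i_1}\bigl(F^{\mathrm{up}}(\bm{c}_{k-1}, \bm{i}')\bigr),$$
and by the inductive hypothesis $F^{\mathrm{up}}(\bm{c}_{k-1}, \bm{i}')$ is $\sigma$-invariant. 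Moreover $F^{\mathrm{up}}(\bm{c}_{k-1}, \bm{i}')$ lies in $\Ker({}_{i_1}e') = \Uq^- \cap T_{i_1}^{-1}(\Uq^-)$, because its image under $T_{i_1}$ already lies in $\Uq^-$.

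The main obstacle is therefore to transport $\sigma$-invariance across $T_{i_1}$, that is, to establish $\sigma(T_{i_1}(x)) = T_{i_1}(\sigma(x))$ for $x \in \Ker({}_{i_1}e')$. Proposition \ref{p:compatibility} supplies the invariance of the form $(\ ,\ )_L$ under $T_{i_1}$ on this subspace, but $\sigma$ also involves the ordinary bar-involution $\overline{\phantom{x}}$. The missing ingredient is the bar-compatibility $\overline{T_{i_1}(x)} = T_{i_1}(\overline{x})$ on $\Ker({}_{i_1}e')$, a classical property of Lusztig's symmetries (cf.~\cite[Chapter 38]{Lus:intro}); combined with the norm invariance from Proposition \ref{p:compatibility}, this yields the required identity and completes the induction. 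As an alternative, one could bypass the induction entirely by applying Proposition \ref{p:dualbar} directly to $(1-q_{i_k}^2)\, T_{i_1}\cdots T_{i_{k-1}}(f_{i_k})$, using the identity $* T_i * = T_i^{-1}$ together with the same bar-compatibility and tracking the signs $(-1)^{\height(\beta_k)}$ and the weight-dependent powers of $q$ by hand; this route is viable but calculationally heavier.
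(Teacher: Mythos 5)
This proposition is not proved in the paper; it is imported from \cite[Proposition 4.26]{MR2914878} and \cite[Proposition 7.4]{GLS:qcluster}, so your argument has to stand on its own. The reduction is fine: $F^{\mathrm{up}}(\bm{c}_k,\bm{i})=(1-q_{i_k}^2)\,T_{i_1}\cdots T_{i_{k-1}}(f_{i_k})$, condition (DCB2)$'$ is vacuous for this candidate, everything hinges on $\sigma$-invariance, and the base case $\sigma\bigl((1-q_{i_1}^2)f_{i_1}\bigr)=(1-q_{i_1}^2)f_{i_1}$ is correctly computed (using that $\sigma(cx)=\overline{c}\,\sigma(x)$).

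The gap is precisely your ``missing ingredient'': the bar-compatibility $\overline{T_{i}(x)}=T_{i}(\overline{x})$ on $\Ker({}_{i}e')$ is false. Take $\mathfrak{g}=\mathfrak{sl}_3$, $i=1$, $x=f_2\in\Ker({}_{1}e')$: then $T_1(f_2)=f_2f_1-qf_1f_2$ while $\overline{T_1(f_2)}=f_2f_1-q^{-1}f_1f_2$. In Lusztig's book the bar involution intertwines $T''_{i,1}$ with $T''_{i,-1}$, not with itself, and $\Ker e'_i$ is not even bar-stable. What your induction actually needs, namely $\sigma\circ T_{i}=T_{i}\circ\sigma$ on $\Uq^{-}\cap T_i^{-1}(\Uq^{-})$, \emph{is} true (in the example: $\sigma(T_1(f_2))=q^3f_1f_2-q^2f_2f_1=T_1(-q^2f_2)=T_1(\sigma(f_2))$), but it does not follow from Proposition \ref{p:compatibility} together with bar-compatibility; note also that Proposition \ref{p:compatibility}(2) only controls pairings of elements of $\Ker e'_i$ against each other, whereas $\sigma$ is defined by pairing against all of $\Uq^{-}$ and the bar involution does not respect the orthogonal decomposition $\Uq^{-}=\Ker e'_i\oplus f_i\Uq^{-}$. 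The standard way to get this commutation is Proposition \ref{p:Saito_refl} (Lusztig--Saito: $T_i^{\pm1}$ permutes the dual canonical basis elements spanning $\Ker e'_i$, resp.\ $\Ker {}_ie'$), a theorem of essentially the same depth as the statement you are proving; with it, your induction closes immediately, and this is in effect how the cited references argue. As written, however, the key step fails, and the ``alternative'' route sketched at the end leans on the same false identity.
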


\section{Quantum twist maps}\label{Quantum twist maps}

In this section, we study the compatibility between quantum twist maps and dual canonical bases. We consider the quantum twist maps defined in \cite[Section 6]{lenagan2015prime}.

\subsection{Quantum twist maps and quantum nilpotent subalgebras}\label{Quantum twist maps and quantum nilpotent subalgebras}
\begin{defn}
For $w\in W$, we consider the $\mathbb{Q}(q)$-algebra anti-automorphism
$\Theta_{w}$ of $\Uq$ defined by
\begin{align*}
\Theta_{w} & :=T_{w}\circ S\circ\vee.
\end{align*}
By Lemma \ref{l:T-antipode} and $(S\circ\vee)^{2}=\mathrm{id}$,
we have $(\Theta_{w})^{-1}=\Theta_{w^{-1}}$. For a homogeneous element
$x\in\Uq$, we have $\wt\left(\Theta_{w}(x)\right)=-w\wt\left(x\right)$. \end{defn}
\begin{rem}
Our definition of the quantum twist map $\Theta_{w}$ seems different
from the one in \cite[Section 6.1]{lenagan2015prime}. However these
definitions are the same because they adopt the different antipode from ours. \end{rem}
\begin{prop}
\label{p:rootvector} For $w\in W$ and $\bm{i}=\left(i_{1},\cdots,i_{\ell}\right)\in I\left(w\right)$,
we have
\begin{align*}
\Theta_{w^{-1}}\left(T_{i_{1}}\cdots T_{i_{k-1}}\left(f_{i_{k}}\right)\right) & =T_{i_{\ell}}\cdots T_{i_{k+1}}\left(f_{i_{k}}\right)\ \text{for}\ k=1,\dots,\ell.
\end{align*}
\end{prop}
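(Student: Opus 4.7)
The plan is to use Lemma~\ref{l:T-antipode} to push $S\circ \vee$ through the braid operators $T_{i_1},\ldots,T_{i_{k-1}}$, telescoping most of the $T$'s against the ones hidden inside $T_{w^{-1}}$, and reducing the identity to a single application of $T_{i_k}$ to the generators.

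First I would observe that, since $(i_\ell,i_{\ell-1},\ldots,i_1)$ is a reduced word for $w^{-1}$, one has $T_{w^{-1}} = T_{i_\ell}T_{i_{\ell-1}}\cdots T_{i_1}$, so the left-hand side equals
$$T_{i_\ell}T_{i_{\ell-1}}\cdots T_{i_1}\circ S\circ \vee \circ T_{i_1}\cdots T_{i_{k-1}}(f_{i_k}).$$
Next I would rewrite Lemma~\ref{l:T-antipode} in the equivalent form $S\circ \vee \circ T_i = T_i^{-1}\circ S\circ \vee$, and iterate it $k-1$ times to obtain
$$S\circ \vee \circ T_{i_1}\cdots T_{i_{k-1}} = T_{i_1}^{-1}\cdots T_{i_{k-1}}^{-1}\circ S\circ \vee.$$
Substituting this and telescoping the cancellations $T_{i_j}T_{i_j}^{-1}$ successively for $j=1,\ldots,k-1$ reduces the expression to
$$T_{i_\ell}T_{i_{\ell-1}}\cdots T_{i_k}\bigl(S(\vee(f_{i_k}))\bigr) = T_{i_\ell}\cdots T_{i_k}\bigl(-e_{i_k}t_{i_k}\bigr),$$
using $S(\vee(f_{i_k}))=S(e_{i_k})=-e_{i_k}t_{i_k}$.

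Finally, the defining formulas $T_{i_k}(e_{i_k})=-f_{i_k}t_{i_k}$ together with $T_{i_k}(q^h)=q^{s_{i_k}(h)}$ (which yields $T_{i_k}(t_{i_k})=t_{i_k}^{-1}$) give $T_{i_k}(-e_{i_k}t_{i_k})=f_{i_k}$, so the whole expression collapses to $T_{i_\ell}T_{i_{\ell-1}}\cdots T_{i_{k+1}}(f_{i_k})$, as claimed. (The edge cases $k=1$ and $k=\ell$ correspond to empty products on the respective sides and cause no trouble.)

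No serious obstacle is present; the argument is a clean telescoping once Lemma~\ref{l:T-antipode} is applied. The only bookkeeping point worth stressing is the index reversal when identifying $T_{w^{-1}}$ with $T_{i_\ell}\cdots T_{i_1}$, which is precisely what makes the cancellation with $T_{i_1}^{-1}\cdots T_{i_{k-1}}^{-1}$ leave exactly $T_{i_\ell}\cdots T_{i_k}$ on the left and thereby produce the asserted $T_{i_\ell}\cdots T_{i_{k+1}}$ after the final application to $-e_{i_k}t_{i_k}$.
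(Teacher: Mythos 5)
Your proof is correct and follows essentially the same route as the paper's: both write $T_{w^{-1}}=T_{i_\ell}\cdots T_{i_1}$, use Lemma~\ref{l:T-antipode} to telescope the braid operators, and finish with the identity $(T_{i_k}\circ S\circ \vee)(f_{i_k})=f_{i_k}$ (which the paper states as "easily checked" and you verify explicitly). No issues.
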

\begin{proof}
It can be easily checked that
\[
\left(T_{i}\circ S\circ\vee\right)\left(f_{i}\right)=f_{i}.
\]
Hence by Lemma \ref{l:T-antipode} we have
\begin{align*}
\Theta_{w^{-1}}\left(T_{i_{1}}\cdots T_{i_{k-1}}\left(f_{i_{k}}\right)\right) & =\left(T_{i_{\ell}}\cdots T_{i_{1}}\circ S\circ\vee\right)\left(T_{i_{1}}\cdots T_{i_{k-1}}\left(f_{i_{k}}\right)\right)\\
 & =\left(T_{i_{\ell}}\cdots T_{i_{k}}\circ S\circ\vee\right)\left(f_{i_{k}}\right)\\
 & =\left(T_{i_{\ell}}\cdots T_{i_{k+1}}\right)\left(f_{i_{k}}\right).
\end{align*}
 \end{proof}
\begin{cor}
For $w\in W$, we have $\mathbf{U}_{q}^{-}\cap\Theta_{w}(\mathbf{U}_{q}^{-})=\Uq^{-}(w)$.
\end{cor}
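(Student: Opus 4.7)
The plan is to prove the two inclusions separately, using quite different ingredients for each direction. The key identity $\Theta_w = T_w\circ S\circ\vee$ with $\vee(\Uq^-)=\Uq^+$ rewrites $\Theta_w(\Uq^-)=T_w(S(\Uq^+))$, so both sides of the corollary live naturally inside $T_w$-translates of standard subalgebras of $\Uq$.

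For the inclusion $\Uq^-\cap \Theta_w(\Uq^-)\subseteq \Uq^-(w)$, I would start from the explicit formula $S(e_i)=-e_it_i\in \Uq^+\Uq^0$. Since $S$ is an anti-algebra automorphism and $\Uq^+\Uq^0$ is a subalgebra of $\Uq$, a one-line induction on the length of a monomial in the $e_i$'s yields $S(\Uq^+)\subseteq \Uq^+\Uq^0$. Applying the algebra automorphism $T_w$ gives
\[
\Theta_w(\Uq^-)=T_w(S(\Uq^+))\subseteq T_w(\Uq^+\Uq^0),
\]
and intersecting with $\Uq^-$ produces $\Uq^-\cap \Theta_w(\Uq^-)\subseteq \Uq^-\cap T_w(\Uq^+\Uq^0)=\Uq^-(w)$ by definition.

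For the opposite inclusion $\Uq^-(w)\subseteq \Uq^-\cap\Theta_w(\Uq^-)$ I would establish the stronger equality $\Theta_w(\Uq^-(w^{-1}))=\Uq^-(w)$ via the PBW bases. Fix $\bm{i}=(i_1,\dots,i_\ell)\in I(w)$, so that $\bm{i}^{\mathrm{op}}:=(i_\ell,\dots,i_1)\in I(w^{-1})$. Proposition \ref{p:rootvector} identifies $\Theta_{w^{-1}}(T_{i_1}\cdots T_{i_{k-1}}(f_{i_k}))=T_{i_\ell}\cdots T_{i_{k+1}}(f_{i_k})$, which is precisely the $(\ell{-}k{+}1)$-st root vector associated to $\bm{i}^{\mathrm{op}}$. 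Since $\Theta_{w^{-1}}$ is an anti-algebra automorphism, it preserves divided powers, and reverses the order of products; a direct computation then yields
\[
\Theta_{w^{-1}}(F^{\mathrm{low}}(\bm{c},\bm{i}))=F^{\mathrm{low}}(\bm{c}^{\mathrm{op}},\bm{i}^{\mathrm{op}}),\qquad \bm{c}^{\mathrm{op}}:=(c_\ell,\dots,c_1),
\]
for every $\bm{c}\in \mathbb{Z}_{\geq 0}^\ell$. This exhibits an explicit bijection between the PBW bases of $\Uq^-(w)$ and $\Uq^-(w^{-1})$, so $\Theta_{w^{-1}}$ restricts to a bijection between these two subalgebras. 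Consequently $\Uq^-(w)=\Theta_w(\Uq^-(w^{-1}))\subseteq \Theta_w(\Uq^-)$, completing the argument.

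Neither step is technically challenging; the closest thing to an obstacle is the index bookkeeping in the PBW identification, where one must verify carefully that the anti-multiplicativity of $\Theta_{w^{-1}}$ converts the reduced word $\bm{i}$ into its reverse $\bm{i}^{\mathrm{op}}$ and simultaneously reverses the exponent vector $\bm{c}$. Once this matching is set up, both inclusions fall out immediately, and in fact the argument gives slightly more than the corollary states: namely, that $\Theta_w$ restricts to an anti-algebra isomorphism $\Uq^-(w^{-1})\xrightarrow{\sim}\Uq^-(w)$ sending one PBW basis bijectively onto the other.
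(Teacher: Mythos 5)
Your proof is correct and follows essentially the same route as the paper: the first inclusion is the identical computation $\Theta_w(\Uq^-)=(T_w\circ S)(\Uq^+)\subset T_w(\Uq^+\Uq^0)$, and the second rests on the same key ingredient, Proposition \ref{p:rootvector}. The only difference is that for the reverse inclusion the paper merely observes that the generators $T_{i_1}\cdots T_{i_{k-1}}(f_{i_k})$ of $\Uq^-(w)$ lie in the subalgebra $\Uq^-\cap\Theta_w(\Uq^-)$, whereas you establish the stronger identity $\Theta_{w^{-1}}(F^{\mathrm{low}}(\bm{c},\bm{i}))=F^{\mathrm{low}}(\bm{c}^{\mathrm{rev}},\bm{i}^{\mathrm{rev}})$, which is precisely the paper's subsequent Proposition \ref{p:PBW}, proved there separately.
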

\begin{proof}
Since $\Theta_{w}(\mathbf{U}_{q}^{-})=\left(T_{w}\circ S\circ\vee\right)\left(\mathbf{U}_{q}^{-}\right)=\left(T_{w}\circ S\right)\left(\mathbf{U}_{q}^{+}\right)\subset T_{w}(\Uq^{+}\Uq^{0})$,
we obtain $\mathbf{U}_{q}^{-}\cap\Theta_{w}(\mathbf{U}_{q}^{-})\subset\mathbf{U}_{q}^{-}\cap T_{w}(\Uq^{+}\Uq^{0})=\Uq^{-}(w)$. On the other hand, by Proposition \ref{p:rootvector},  
we have $T_{i_{1}}\cdots T_{i_{k-1}}\left(f_{i_{k}}\right)\in\mathbf{U}_{q}^{-}\cap\Theta_{w}(\mathbf{U}_{q}^{-})$, where $\bm{i}=\left(i_{1},\cdots,i_{\ell}\right)\in I\left(w\right)$.
Because $\Uq^{-}(w)$ is generated
by $\left\{ T_{i_{1}}\cdots T_{i_{k-1}}\left(f_{i_{k}}\right)\right\} _{1\leq k\leq\ell}$,
we obtain $\Uq^{-}(w)\subset\mathbf{U}_{q}^{-}\cap\Theta_{w}(\mathbf{U}_{q}^{-})$.
 \end{proof}
\begin{defn}
(1) For $\bm{i}=\left(i_{1},\cdots,i_{\ell}\right)\in I\left(w\right)$,
we set $\bm{i}^{{\rm rev}}=\left(i_{\ell},\dots,i_{1}\right)\in I\left(w^{-1}\right)$.

(2) For $\bm{c}=\left(c_{1},\dots,c_{\ell}\right)\in\mathbb{Z}_{\geq0}^{\ell}$,
we set $\bm{c}^{\mathrm{rev}}:=\left(c_{\ell},\dots,c_{1}\right)\in\mathbb{Z}_{\geq0}^{\ell}$. \end{defn}
\begin{prop}
\label{p:PBW} For $w\in W$, $\bm{i}\in I\left(w\right)$
and $\bm{c}\in\mathbb{Z}_{\geq0}^{\ell}$, we have
\begin{align*}
\Theta_{w^{-1}}\left(F^{\mathrm{up}}\left(\bm{c},\bm{i}\right)\right) & =F^{\mathrm{up}}\left(\bm{c}^{\mathrm{rev}},\bm{i}^{\mathrm{rev}}\right).
\end{align*}
\end{prop}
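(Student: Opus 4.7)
The plan is to reduce the statement to the corresponding equality for $F^{\mathrm{low}}$ and then prove that by unwinding the definitions, using only the anti-multiplicativity of $\Theta_{w^{-1}}$ together with Proposition \ref{p:rootvector}.

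For the reduction, note that $F^{\mathrm{up}}(\bm{c},\bm{i})$ differs from $F^{\mathrm{low}}(\bm{c},\bm{i})$ by the scalar factor $(F^{\mathrm{low}}(\bm{c},\bm{i}), F^{\mathrm{low}}(\bm{c},\bm{i}))_L$, which by the formula \eqref{norm} is the product $\prod_{k=1}^{\ell}\prod_{j=1}^{c_k}(1-q_{i_k}^{2j})^{-1}$. This product depends only on the multiset of pairs $\{(i_k,c_k)\}_{k=1}^{\ell}$, so reindexing $m=\ell+1-k$ shows it is unchanged under $(\bm{c},\bm{i})\mapsto(\bm{c}^{\mathrm{rev}},\bm{i}^{\mathrm{rev}})$. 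Since $\Theta_{w^{-1}}$ is $\mathbb{Q}(q)$-linear, it therefore suffices to establish
\[
\Theta_{w^{-1}}(F^{\mathrm{low}}(\bm{c},\bm{i}))=F^{\mathrm{low}}(\bm{c}^{\mathrm{rev}},\bm{i}^{\mathrm{rev}}).
\]

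For the main computation, I would use that $\Theta_{w^{-1}}$ is a $\mathbb{Q}(q)$-algebra anti-automorphism to expand
\[
\Theta_{w^{-1}}\bigl(f_{i_1}^{(c_1)}T_{i_1}(f_{i_2}^{(c_2)})\cdots (T_{i_1}\cdots T_{i_{\ell-1}})(f_{i_\ell}^{(c_\ell)})\bigr)
\]
as the left-to-right product of the images $\Theta_{w^{-1}}((T_{i_1}\cdots T_{i_{k-1}})(f_{i_k}^{(c_k)}))$ taken in the reversed order $k=\ell,\ell-1,\dots,1$. Since divided powers of a single element $x$ are built from $x$ using only $\mathbb{Q}(q)$-scalars and multiplication, the anti-automorphism $\Theta_{w^{-1}}$ satisfies $\Theta_{w^{-1}}(x^{(c)})=\Theta_{w^{-1}}(x)^{(c)}$, and similarly the $\mathbb{Q}(q)$-algebra automorphism $T_{i_\ell}\cdots T_{i_{k+1}}$ commutes with divided powers. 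Combining these with Proposition \ref{p:rootvector} yields, for each $k$,
\[
\Theta_{w^{-1}}\bigl((T_{i_1}\cdots T_{i_{k-1}})(f_{i_k}^{(c_k)})\bigr)=(T_{i_\ell}\cdots T_{i_{k+1}})(f_{i_k}^{(c_k)}).
\]

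The final step is purely bookkeeping: substituting $m=\ell+1-k$ turns the reversed product over $k=\ell,\dots,1$ into the left-to-right product over $m=1,\dots,\ell$ of the factors $(T_{i_\ell}\cdots T_{i_{\ell-m+2}})(f_{i_{\ell-m+1}}^{(c_{\ell-m+1})})$, which is exactly $F^{\mathrm{low}}(\bm{c}^{\mathrm{rev}},\bm{i}^{\mathrm{rev}})$ by the very definition of the PBW-type basis applied to the reversed reduced word. I do not expect any genuine obstacle; the only thing requiring care is tracking the reversal of the product induced by the anti-automorphism and matching it against the lexicographic indexing used in the definition of $F^{\mathrm{low}}$.
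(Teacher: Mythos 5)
Your proposal is correct and follows the same route as the paper: reduce to the $F^{\mathrm{low}}$ statement via the reversal-invariance of the norm formula \eqref{norm}, then deduce $\Theta_{w^{-1}}(F^{\mathrm{low}}(\bm{c},\bm{i}))=F^{\mathrm{low}}(\bm{c}^{\mathrm{rev}},\bm{i}^{\mathrm{rev}})$ from Proposition \ref{p:rootvector}. The paper leaves the last step as ``immediate''; you have simply spelled out the product reversal and the compatibility with divided powers, both of which are correct.
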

\begin{proof}
By the equality \eqref{norm}, we have
\[
\left(F^{\mathrm{low}}\left(\bm{c},\bm{i}\right),F^{\mathrm{low}}\left(\bm{c},\bm{i}\right)\right)_{L}=\left(F^{\mathrm{low}}\left(\bm{c}^{\mathrm{rev}},\bm{i}^{\mathrm{rev}}\right),F^{\mathrm{low}}\left(\bm{c}^{\mathrm{rev}},\bm{i}^{\mathrm{rev}}\right)\right)_{L}.
\]
Hence it suffices to show that $\Theta_{w^{-1}}\left(F^{\mathrm{low}}\left(\bm{c},\bm{i}\right)\right)=F^{\mathrm{low}}\left(\bm{c}^{\mathrm{rev}},\bm{i}^{\mathrm{rev}}\right)$.
This follows immediately from Proposition \ref{p:rootvector}.
 \end{proof}
By Proposition \ref{p:PBW}, $\Theta_{w^{-1}}$ is also regarded as
a $\mathbb{Q}(q)$-algebra anti-isomorphism from $\Uq^{-}(w)$ to
$\Uq^{-}(w^{-1})$.
\begin{prop}
\label{p:twist-and-dualbar} Let $w\in W$. For $x\in\Uq^{-}(w)$, we have
\[
\left(\Theta_{w^{-1}}\circ\sigma\right)\left(x\right)=\left(\sigma\circ\Theta_{w^{-1}}\right)\left(x\right).
\]
\end{prop}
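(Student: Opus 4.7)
The plan is to introduce the conjugated operator $\tilde\sigma := \Theta_w \circ \sigma \circ \Theta_{w^{-1}}$, a $\mathbb{Q}$-linear endomorphism of $\Uq^-(w)$ (well-defined because $\sigma$ preserves $\Uq^-(w^{-1})$, each dual canonical basis element being $\sigma$-invariant), and to show $\tilde\sigma = \sigma$ on $\Uq^-(w)$. My strategy has three steps: (1) verify that $\tilde\sigma$ inherits the structural identities of $\sigma$, namely bar-semilinearity together with the twisted anti-algebra rule of Proposition \ref{p:dualbar}; (2) check agreement on the algebra generators $F_k := T_{i_1}\cdots T_{i_{k-1}}(f_{i_k})$ of $\Uq^-(w)$; (3) propagate to all of $\Uq^-(w)$.

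For (1), bar-semilinearity of $\tilde\sigma$ is automatic since $\Theta_w$ and $\Theta_{w^{-1}}$ are $\mathbb{Q}(q)$-linear. For the twisted anti-algebra rule, given homogeneous $x,y\in\Uq^-(w)$, I combine anti-multiplicativity of $\Theta_{w^{-1}}$, Proposition \ref{p:dualbar} applied to $\Theta_{w^{-1}}(y)\Theta_{w^{-1}}(x)\in\Uq^-(w^{-1})$, and anti-multiplicativity of $\Theta_w$ to obtain
\[
\tilde\sigma(xy) = q^{(\wt\Theta_{w^{-1}}(y),\,\wt\Theta_{w^{-1}}(x))}\,\tilde\sigma(y)\tilde\sigma(x).
\]
The key observation is that $\wt\Theta_{w^{-1}}(z) = -w^{-1}\wt z$ and the form $(\,,\,)$ is Weyl-invariant, so the exponent collapses to $(\wt x,\wt y)$, matching the formula for $\sigma$.

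For (2), Proposition \ref{p:rootdual} gives $F^{\mathrm{up}}(\bm{c}_k,\bm{i}) = (1-q_{i_k}^2)F_k \in \mathbf{B}^{\mathrm{up}}$, so it is $\sigma$-invariant, which together with bar-semilinearity of $\sigma$ forces $\sigma(F_k) = -q_{i_k}^2 F_k$. Running the same argument inside $\Uq^-(w^{-1})$ with the reversed word $\bm{i}^{\mathrm{rev}}$ yields $\sigma(T_{i_\ell}\cdots T_{i_{k+1}}(f_{i_k})) = -q_{i_k}^2\,T_{i_\ell}\cdots T_{i_{k+1}}(f_{i_k})$. Combining with $\Theta_{w^{-1}}(F_k) = T_{i_\ell}\cdots T_{i_{k+1}}(f_{i_k})$ from Proposition \ref{p:rootvector} and the relation $\Theta_w\circ\Theta_{w^{-1}}=\id$, I obtain $\tilde\sigma(F_k) = \Theta_w\bigl(-q_{i_k}^2\,\Theta_{w^{-1}}(F_k)\bigr) = -q_{i_k}^2 F_k = \sigma(F_k)$.

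For (3), set $\mathcal{V} := \{x\in\Uq^-(w) : \tilde\sigma(x)=\sigma(x)\}$. By (2), $\mathcal{V}$ contains each $F_k$; by the common bar-semilinearity, $\mathcal{V}$ is stable under $\mathbb{Q}(q)$-linear combinations; and by the common twisted anti-algebra rule, $\mathcal{V}$ is stable under multiplication of homogeneous elements. Since the products of the $F_k$'s span $\Uq^-(w)$ over $\mathbb{Q}(q)$ (Corollary after Proposition \ref{p:rootvector}), we conclude $\mathcal{V} = \Uq^-(w)$. The only substantive calculation is the Weyl-invariance simplification of the $q$-twist exponent in step (1); the hard part is recognizing this as the right way to encode the compatibility—once that structural identity is in hand, the remaining ingredients assemble cleanly from Propositions \ref{p:rootvector} and \ref{p:rootdual}.
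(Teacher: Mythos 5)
Your proposal is correct and follows essentially the same route as the paper: check the identity on the degree-one dual PBW vectors (which are $\sigma$-fixed dual canonical basis elements by Proposition \ref{p:rootdual}, and are permuted by $\Theta_{w^{-1}}$ by Proposition \ref{p:rootvector}), then propagate multiplicatively using the twisted anti-multiplicativity of $\sigma$ from Proposition \ref{p:dualbar} together with $\wt\Theta_{w^{-1}}(x)=-w^{-1}\wt x$ and the $W$-invariance of $(\;,\;)$. Your conjugation by $\Theta_w$ and the rescaling $F^{\mathrm{up}}(\bm{c}_k,\bm{i})=(1-q_{i_k}^2)F_k$ (giving $\sigma(F_k)=-q_{i_k}^2F_k$) are only cosmetic variations on the paper's argument.
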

\begin{proof}
We may assume that $x$ is homogeneous. On generators, by Proposition  \ref{p:rootdual},
we have
\begin{align*}
\left(\Theta_{w^{-1}}\circ\sigma\right)\left(F^{{\rm up}}\left(\bm{c}_{k},\bm{i}\right)\right) & =\Theta_{w^{-1}}\left(F^{{\rm up}}\left(\bm{c}_{k},\bm{i}\right)\right)\\
 & =F^{{\rm up}}\left(\bm{c}_{k}^{\mathrm{rev}},\bm{i}^{\mathrm{rev}}\right)=\sigma\left(F^{{\rm up}}\left(\bm{c}_{k}^{\mathrm{rev}},\bm{i}^{\mathrm{rev}}\right)\right)\\
 & =\left(\sigma\circ\Theta_{w^{-1}}\right)\left(F^{{\rm up}}\left(\bm{c}_{k},\bm{i}\right)\right).
\end{align*}
Assume that the desired equality holds for homogeneous elements $x',x''\in\Uq^{-}(w)$.
Then, by Proposition \ref{p:dualbar}, we have
\begin{align*}
\left(\Theta_{w^{-1}}\circ\sigma\right)\left(x'x''\right) & =q^{\left(\wt\left(x'\right),\wt\left(x''\right)\right)}\Theta_{w^{-1}}\left(\sigma\left(x''\right)\sigma\left(x'\right)\right)\\
 & =q^{\left(\wt\left(x'\right),\wt\left(x''\right)\right)}\Theta_{w^{-1}}\left(\sigma\left(x'\right)\right)\Theta_{w^{-1}}\left(\sigma\left(x''\right)\right)\\
 & =q^{\left(-w^{-1}\wt\left(x'\right),-w^{-1}\wt\left(x''\right)\right)}\sigma\left(\Theta_{w^{-1}}\left(x'\right)\right)\sigma\left(\Theta_{w^{-1}}\left(x''\right)\right)\\
 & =\sigma\left(\Theta_{w^{-1}}\left(x''\right)\Theta_{w^{-1}}\left(x'\right)\right)\\
 & =\left(\sigma\circ\Theta_{w^{-1}}\right)\left(x'x''\right).
\end{align*}

Hence we obtained the assertion.  \end{proof}
Now we show that the quantum twist maps induce bijections between dual canonical bases of quantum nilpotent subalgebras. Recall Definition \ref{d:PBWparam}.

\begin{thm}
\label{t:mainthm} Let $w\in W$ and $\bm{i}\in I(w)$. For $G^{{\rm up}}\left(b\left(\bm{c},\bm{i}\right)\right)\in\mathbf{B}^{\mathrm{up}}\cap\Uq^{-}(w)$,
we have
\[
\Theta_{w^{-1}}\left(G^{{\rm up}}\left(b\left(\bm{c},\bm{i}\right)\right)\right)=G^{{\rm up}}\left(b\left(\bm{c}^{\mathrm{rev}},\bm{i}^{\mathrm{rev}}\right)\right)\in\mathbf{B}^{\mathrm{\mathrm{up}}}\cap\Uq^{-}(w^{-1}).
\]
\end{thm}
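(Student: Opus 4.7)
The plan is to invoke Remark~\ref{r:characterize}, which asserts that $G^{\up}(b(\bm{c}^{\rm rev},\bm{i}^{\rm rev}))$ is the unique element of $\Uq^-(w^{-1})$ that is $\sigma$-invariant (condition (DCB1)) and whose expansion with respect to $\{F^{\up}(\bm{c}',\bm{i}^{\rm rev})\}_{\bm{c}'\in\mathbb{Z}_{\geq 0}^\ell}$ has the form $F^{\up}(\bm{c}^{\rm rev},\bm{i}^{\rm rev})+\sum_{\bm{c}'} q\mathbb{Z}[q]\, F^{\up}(\bm{c}',\bm{i}^{\rm rev})$ (condition (DCB2)$'$). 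So I would verify that the left hand side $\Theta_{w^{-1}}(G^{\up}(b(\bm{c},\bm{i})))$ satisfies both properties.

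For (DCB1): Since $G^{\up}(b(\bm{c},\bm{i}))$ is a dual canonical basis element, $\sigma(G^{\up}(b(\bm{c},\bm{i})))=G^{\up}(b(\bm{c},\bm{i}))$. Applying Proposition~\ref{p:twist-and-dualbar}, which states that $\Theta_{w^{-1}}$ commutes with $\sigma$ on $\Uq^-(w)$, gives
\[
\sigma\bigl(\Theta_{w^{-1}}(G^{\up}(b(\bm{c},\bm{i})))\bigr)=\Theta_{w^{-1}}\bigl(\sigma(G^{\up}(b(\bm{c},\bm{i})))\bigr)=\Theta_{w^{-1}}(G^{\up}(b(\bm{c},\bm{i}))).
\]
For (DCB2)$'$: Start from the expansion guaranteed by Proposition~\ref{p:dualcanonical}(2),
\[
G^{\up}(b(\bm{c},\bm{i}))=F^{\up}(\bm{c},\bm{i})+\sum_{\bm{c}'<\bm{c}} d_{\bm{c},\bm{c}'}^{\bm{i}}\, F^{\up}(\bm{c}',\bm{i}), \qquad d_{\bm{c},\bm{c}'}^{\bm{i}}\in q\mathbb{Z}[q],
\]
apply $\Theta_{w^{-1}}$, and substitute Proposition~\ref{p:PBW} to obtain
\[
\Theta_{w^{-1}}(G^{\up}(b(\bm{c},\bm{i})))=F^{\up}(\bm{c}^{\rm rev},\bm{i}^{\rm rev})+\sum_{\bm{c}'<\bm{c}} d_{\bm{c},\bm{c}'}^{\bm{i}}\, F^{\up}((\bm{c}')^{\rm rev},\bm{i}^{\rm rev}).
\]
All non-leading coefficients lie in $q\mathbb{Z}[q]$, so (DCB2)$'$ holds at the position $\bm{c}^{\rm rev}$. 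Combining this with (DCB1) and invoking Remark~\ref{r:characterize} identifies the element with $G^{\up}(b(\bm{c}^{\rm rev},\bm{i}^{\rm rev}))$.

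The only delicate point — and the reason Remark~\ref{r:characterize} rather than Proposition~\ref{p:dualcanonical}(2) is the right tool — is that the reversal $\bm{c}'\mapsto(\bm{c}')^{\rm rev}$ does not send the left lexicographic order to itself, so the condition $\bm{c}'<\bm{c}$ in the original expansion does not translate into $(\bm{c}')^{\rm rev}<\bm{c}^{\rm rev}$ under the same order. Thus the usual unitriangularity relative to the left lex order cannot be transported directly; fortunately (DCB2)$'$ imposes no order restriction and is sufficient for unique characterization, so this asymmetry is not an obstacle once one works with the formulation of Remark~\ref{r:characterize}. (This is presumably also the mechanism behind the subsequent Corollary~\ref{c:right-lex} on the ``reverse'' lex unitriangularity.)
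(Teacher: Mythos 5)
Your proposal is correct and follows exactly the paper's own argument: verify membership in $\Uq^-(w^{-1})$, check (DCB1) via Proposition~\ref{p:twist-and-dualbar}, check (DCB2)$'$ via Propositions~\ref{p:dualcanonical} and~\ref{p:PBW}, and conclude by the characterization in Remark~\ref{r:characterize}. Your closing observation about why the order-free condition (DCB2)$'$ is the right tool (since reversal does not preserve the left lexicographic order) is also precisely the point of Remark~\ref{r:characterize} and of Corollary~\ref{c:right-lex}.
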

\begin{proof}
We have already checked that $\Theta_{w^{-1}}\left(G^{{\rm up}}\left(b\left(\bm{c},\bm{i}\right)\right)\right)\in\Uq^{-}(w^{-1})$.
Hence by Remark \ref{r:characterize} we only have to show that
\begin{align*}
\sigma\left(\Theta_{w^{-1}}\left(G^{{\rm up}}\left(b\left(\bm{c},\bm{i}\right)\right)\right)\right) & =\Theta_{w^{-1}}\left(G^{{\rm up}}\left(b\left(\bm{c},\bm{i}\right)\right)\right),\\
\Theta_{w^{-1}}\left(G^{{\rm up}}\left(b\left(\bm{c},\bm{i}\right)\right)\right)-F^{{\rm {up}}}\left(\bm{c}^{\mathrm{rev}},\bm{i}^{\mathrm{rev}}\right) & \in\sum_{\bm{c}'\in\mathbb{Z}_{\geq0}^{\ell}}q\mathbb{Z}[q]F^{{\rm {up}}}\left(\bm{c}',\bm{i}^{\mathrm{rev}}\right).
\end{align*}

The latter follows from Proposition \ref{p:dualcanonical} and Proposition
\ref{p:PBW}. The former follows from Proposition \ref{p:twist-and-dualbar}. \end{proof}
By applying $\Theta_{w^{-1}}$ to the expansion of the dual PBW type basis into the dual canonical basis in Remark \ref{r:dualcanonical}, we obtain the following corollary. This symmetry is new when $\mathfrak{g}$ is not finite dimensional. See also Remark \ref{r:finite}. 
\begin{cor}
\label{c:right-lex} Recall the notation in Remark \ref{r:dualcanonical}. For the expansion of the dual PBW type basis into
the dual canonical basis, we have
\begin{equation}\label{reverse}
\left[F^{\mathrm{up}}\left(\bm{c},\bm{i}\right)\colon G^{{\rm up}}\left(b\left(\bm{c}',\bm{i}\right)\right)\right]=\left[F^{\mathrm{up}}\left(\bm{c}^{\mathrm{rev}},\bm{i}^{\mathrm{rev}}\right)\colon G^{{\rm up}}\left(b\left(\left(\bm{c}'\right)^{\mathrm{rev}},\bm{i}^{\mathrm{rev}}\right)\right)\right].
\end{equation}
In particular, we can write the expansion as follows:
\begin{equation*}
F^{\mathrm{up}}\left(\bm{c},\bm{i}\right)=G^{{\rm up}}\left(b\left(\bm{c},\bm{i}\right)\right)+\sum_{\bm{c}'<\bm{c},\;\bm{c}'<_{\mathrm{r}}\bm{c}}\left[F^{\mathrm{up}}\left(\bm{c},\bm{i}\right)\colon G^{{\rm up}}\left(b\left(\bm{c}',\bm{i}\right)\right)\right]G^{{\rm up}}\left(b\left(\bm{c}',\bm{i}\right)\right),
\end{equation*}
here $<_{\mathrm{r}}$ denotes the right lexicographic order on $\mathbb{Z}_{\geq 0}^{\ell}$, which is determined by the condition that $\bm{c}'<_{\mathrm{r}}\bm{c}$ if and only if $\left(\bm{c}'\right)^{\mathrm{rev}}<\bm{c}^{\mathrm{rev}}$.
\end{cor}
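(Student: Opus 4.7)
The plan is to deduce the corollary by transporting the unitriangular expansion of Remark \ref{r:dualcanonical} across the anti-isomorphism $\Theta_{w^{-1}}\colon \Uq^-(w)\to\Uq^-(w^{-1})$. Concretely, I would start from
\[
F^{\mathrm{up}}(\bm{c},\bm{i})=\sum_{\bm{c}'\in\mathbb{Z}_{\geq0}^{\ell}}\left[F^{\mathrm{up}}(\bm{c},\bm{i}):\Gup(b(\bm{c}',\bm{i}))\right]\Gup(b(\bm{c}',\bm{i}))
\]
and apply $\Theta_{w^{-1}}$ to both sides. Since $\Theta_{w^{-1}}$ is $\mathbb{Q}(q)$-linear the scalar coefficients are preserved; Proposition \ref{p:PBW} converts the left-hand side into $F^{\mathrm{up}}(\bm{c}^{\mathrm{rev}},\bm{i}^{\mathrm{rev}})$, and Theorem \ref{t:mainthm} converts each summand $\Gup(b(\bm{c}',\bm{i}))$ into $\Gup(b((\bm{c}')^{\mathrm{rev}},\bm{i}^{\mathrm{rev}}))$. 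Comparing the resulting identity with the expansion of $F^{\mathrm{up}}(\bm{c}^{\mathrm{rev}},\bm{i}^{\mathrm{rev}})$ into the dual canonical basis associated with the reduced word $\bm{i}^{\mathrm{rev}}$ then yields \eqref{reverse}.

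For the ``in particular'' statement, I would combine \eqref{reverse} with Remark \ref{r:dualcanonical}. The coefficient $\left[F^{\mathrm{up}}(\bm{c},\bm{i}):\Gup(b(\bm{c}',\bm{i}))\right]$ vanishes unless $\bm{c}'\leq\bm{c}$ in the left lexicographic order. On the other hand \eqref{reverse} identifies this coefficient with $\left[F^{\mathrm{up}}(\bm{c}^{\mathrm{rev}},\bm{i}^{\mathrm{rev}}):\Gup(b((\bm{c}')^{\mathrm{rev}},\bm{i}^{\mathrm{rev}}))\right]$, to which Remark \ref{r:dualcanonical} applies for the reduced word $\bm{i}^{\mathrm{rev}}$; hence it also vanishes unless $(\bm{c}')^{\mathrm{rev}}\leq\bm{c}^{\mathrm{rev}}$, that is, $\bm{c}'\leq_{\mathrm{r}}\bm{c}$. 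Only the terms with $\bm{c}'<\bm{c}$ and $\bm{c}'<_{\mathrm{r}}\bm{c}$ survive off the diagonal, and the diagonal term $\bm{c}'=\bm{c}$ carries coefficient $1$, producing the displayed expansion.

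I do not foresee a real obstacle: the entire content of the corollary has been absorbed into Theorem \ref{t:mainthm} and Proposition \ref{p:PBW}, and this final step is essentially a bookkeeping argument. The only subtlety worth flagging is that $\Theta_{w^{-1}}$ is an anti-algebra map rather than an algebra map, but since the expansion being transported is a $\mathbb{Q}(q)$-linear (not multiplicative) relation, only its underlying vector-space bijection is used and the anti-automorphism property plays no role.
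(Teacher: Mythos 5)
Your proposal is correct and coincides with the paper's argument: the authors likewise obtain \eqref{reverse} by applying $\Theta_{w^{-1}}$ to the expansion in Remark \ref{r:dualcanonical} and invoking Proposition \ref{p:PBW} together with Theorem \ref{t:mainthm}, with the ``in particular'' part following from the left-lexicographic unitriangularity for both $\bm{i}$ and $\bm{i}^{\mathrm{rev}}$. Your remark that only linearity of $\Theta_{w^{-1}}$ is used (not its anti-multiplicativity) is a correct and worthwhile observation.
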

\begin{rem}\label{r:NO}
Denote by $\Phi_{+}$ the set of the positive roots of $\mathfrak{g}$. For $\beta\in \Phi_+$, let $\mathfrak{g}_{-\beta}$ be the root space of $-\beta$ and fix a root vector $F_{-\beta}\in \mathfrak{g}_{-\beta}$. Take $w\in W$ and $\bm{i}=(i_1,\dots, i_{\ell})\in I(w)$. Set $\mathfrak{n}^-(w):=\bigoplus_{\beta\in \Phi_+\cap -w\Phi_+}\mathfrak{g}_{-\beta}$. This is a Lie subalgebra of $\mathfrak{g}$. 
Let ${}^{\mathcal{A}}\Uq^-(w):=\sum_{\bm{c}\in\mathbb{Z}_{\geq 0}^{\ell}}\mathcal{A}\Gup\left(b(\bm{c},\bm{i})\right)$. Regard $\mathbb{C}$ as an $\mathcal{A}$-module via $q\mapsto 1$. Then $\mathbb{C}\otimes_{\mathcal{A}}({{}^{\mathcal{A}}\Uq^-(w)})$ is isomorphic to the graded dual $\mathbf{U}(\mathfrak{n}^-(w))^{\ast}_{\mathrm{gr}}$ of the universal enveloping algebra of $\mathfrak{n}^-(w)$ with respect to the usual $\sum_{i\in I}\mathbb{Z}_{\leq 0}\alpha_i$-grading. Consider the map $\mathcal{F}_{\bm{i}}\colon \mathbf{U}(\mathfrak{n}^-(w))^{\ast}_{\mathrm{gr}}\to \mathbb{C}[t_1,\dots, t_{\ell}]$ given by 
\[
f\mapsto \sum_{a_1,\dots, a_{\ell}\in \mathbb{Z}_{\geq 0}}\langle f, F_{-\beta_1}^{a_1}\cdots F_{-\beta_{\ell}}^{a_{\ell}}\rangle\frac{t_1^{a_1}\cdots t_{\ell}^{a_{\ell}}}{a_1!\cdots a_{\ell}!},
\]
here $\Phi_+\cap -w\Phi_+=\{\beta_{k}:=s_{i_1}\cdots s_{i_{k-1}}\alpha_{i_k}\mid k=1,\dots, \ell\}$. Then the map $\mathcal{F}_{\bm{i}}$ is an injective well-defined $\mathbb{C}$-algebra homomorphism. 

By the way, the lexicographic orders on $\mathbb{Z}_{\geq 0}^{\ell}$ naturally induce the total orders on the monomials in $\mathbb{C}[t_1,\dots, t_{\ell}]$. Thus Corollary \ref{c:right-lex} says that the highest term of $\mathcal{F}_{\bm{i}}(1\otimes\Gup(b(\bm{c},\bm{i})))$ with respect to the left lexicographic order is equal to its highest term with respect to the right lexicographic order (and it corresponds to $\bm{c}$). Compare this fact with the results in \cite{FO}.
\end{rem}
\subsection{Quantum twist maps and unipotent quantum minors}\label{Quantum twist maps and unipotent quantum minors}
Unipotent quantum minors are typical and manageable elements of dual canonical bases. In this subsection, we show that the images of certain unipotent quantum minors under quantum twist maps are also described by unipotent quantum minors (Theorem \ref{t:mainthm2}). 

Set $P_{+}:=\{\lambda\in P\mid\langle\lambda,h_{i}\rangle\geq0\;\text{for all}\;i\in I\}$.
For $\lambda\in P_{+}$, denote by $V(\lambda)$ (resp.~$V(-\lambda)$)
the integrable highest (resp.~lowest) weight $\Uq$-module generated
by a highest (resp.~lowest) weight vector $v_{\lambda}$ (resp.~$v_{-\lambda}$)
of weight $\lambda$ (resp.~$-\lambda$).
\begin{defn}
For $i\in I$, denote by $T_{i}$ the $\mathbb{Q}(q)$-linear isomorphism $T''_{i,1}\colon V(\pm\lambda)\to V(\pm\lambda)$ in \cite[Chapter 5]{Lus:intro}.
\end{defn}
The important properties of $T_{i}$ in this paper are the following
\cite[Chapter 37, 39]{Lus:intro}:
\begin{prop}
\textup{(1)} For $x\in\Uq$ and $v\in V(\pm\lambda)$, we have $T_{i}(x.v)=T_{i}(x).T_{i}(v)$.

\textup{(2)} For $w\in W$, the composition map $T_{w}:=T_{i_{1}}\cdots T_{i_{\ell}}\colon V(\pm\lambda)\to V(\pm\lambda)$
does not depend on the choice of $(i_{1},\dots,i_{\ell})\in I(w)$. \end{prop}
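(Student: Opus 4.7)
The plan is to first fix the definition of $T_i\colon V(\pm\lambda)\to V(\pm\lambda)$ given by the explicit alternating-sum formula in \cite[Chapter 5]{Lus:intro}, which expresses $T_i$ on a weight vector $v\in V(\pm\lambda)_\mu$ as a finite sum of triples $f_i^{(a)}e_i^{(b)}f_i^{(c)}v$ with signs and powers of $q_i$ controlled by $\langle\mu,h_i\rangle$. With this formula in hand, both statements reduce to essentially explicit computations.

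For part~(1), since $\Uq$ is generated by $e_j,f_j,q^h$ and $V(\pm\lambda)$ is spanned by weight vectors, it suffices to verify $T_i(y.v)=T_i(y).T_i(v)$ when $y$ is a generator and $v$ is a weight vector. The case $y=q^h$ is immediate, because $T_i$ shifts weights by $s_i$ on both $\Uq$ and $V(\pm\lambda)$ while $T_i(q^h)=q^{s_i(h)}$. The case $j=i$ collapses to an $\mathfrak{sl}_2$-computation on the $\mathbf{U}_{q_i}(\mathfrak{sl}_2)$-submodule generated by $v$. The case $j\neq i$ is the most involved: one expands $T_i(e_j)$ and $T_i(f_j)$ using the Serre-type formulas recalled in Section~\ref{Lusztig's braid group symmetries} and matches term by term via the commutation identities between divided powers of $e_i$ (resp.~$f_i$) and $e_j$ (resp.~$f_j$). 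These are precisely the identities underlying the proof that $T_i$ is an algebra automorphism of $\Uq$.

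For part~(2), the key move is cyclicity. Fix two reduced words $\bm{i},\bm{i}'\in I(w)$ and let $R,R'$ be the corresponding compositions of $T_{i_k}$'s and $T_{i'_k}$'s. For every $x\in\Uq$, part~(1) combined with the already established braid relations for $T_i$ on $\Uq$ gives
\[
R(x.v_{\pm\lambda})=R(x).R(v_{\pm\lambda})=R'(x).R(v_{\pm\lambda}),
\]
and analogously with $R'(v_{\pm\lambda})$ on the right. Since $V(\pm\lambda)=\Uq.v_{\pm\lambda}$, it therefore suffices to prove $R(v_{\pm\lambda})=R'(v_{\pm\lambda})$. By Matsumoto's theorem any two reduced expressions for $w$ are linked by a chain of braid moves, so this in turn reduces to the rank-two identity $T_iT_jT_i\cdots v_{\pm\lambda}=T_jT_iT_j\cdots v_{\pm\lambda}$ with $m_{ij}$ factors on each side.

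The main obstacle is this last rank-two identity on the cyclic vector. Both sides lie in the same one-dimensional weight space of the submodule of $V(\pm\lambda)$ generated by $v_{\pm\lambda}$ over the rank-two parabolic subalgebra attached to $\{i,j\}$, so they differ by a scalar; the work is to verify that this scalar equals $1$. This is carried out case by case over the $A_1\times A_1$, $A_2$, $B_2$, and $G_2$ subsystems by substituting $v_{\pm\lambda}$ into the explicit formula for $T_i$ and tracking signs and powers of $q_i$ carefully. Although entirely mechanical, this bookkeeping is the technical heart of the argument.
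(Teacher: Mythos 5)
The paper does not prove this proposition at all --- it is quoted from Lusztig's book (Chapters 37 and 39) --- and your outline is essentially the standard argument from that source, so the overall architecture is sound. Part (1) as you describe it (check on generators; $q^{h}$ immediate from the weight shift, $j=i$ by a rank-one computation, $j\neq i$ via the commutation identities between divided powers) is exactly the proof of Lusztig's 37.1.2.

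For part (2) there is one concrete misstep in the reduction. Once you have reduced to $R(v_{\pm\lambda})=R'(v_{\pm\lambda})$ and invoked Matsumoto, a braid move generally occurs in the \emph{middle} of the reduced word, so the identity you actually need is
\[
T_{i}T_{j}T_{i}\cdots(u)=T_{j}T_{i}T_{j}\cdots(u),\qquad u:=T_{i_{k+m+1}}\cdots T_{i_{\ell}}(v_{\pm\lambda}),
\]
and $u$ equals $v_{\pm\lambda}$ only when the move sits at the right end of the word. The repair is standard: since the word is reduced, $u$ is an extremal weight vector whose weight is dominant (resp.\ antidominant) with respect to both $i$ and $j$, hence a highest (resp.\ lowest) weight vector for the rank-two subalgebra attached to $\{i,j\}$; your one-dimensionality argument then applies to the rank-two submodule generated by $u$ rather than by $v_{\pm\lambda}$, but the rank-two identity must be stated and proved in that generality. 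Two further caveats. First, you use the braid relations for the $T_{i}$ on $\Uq$ to identify $R(x)$ with $R'(x)$; in Lusztig's development those algebra relations are \emph{deduced from} the module relations you are proving, so a self-contained treatment must either establish the rank-two algebra relations independently (possible by direct check on generators) or follow Lusztig's order --- within this paper, where the algebra relations are quoted as known, your use of them is admissible. Second, the only non-formal steps --- the $j\neq i$ case in (1) and the verification that the rank-two scalar equals $1$ in types $A_{1}\times A_{1}$, $A_{2}$, $B_{2}$, $G_{2}$ --- are asserted rather than carried out, so what you have is a correct plan rather than a complete proof.
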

\begin{defn}[{{{\cite[Lemma 39.1.2]{Lus:intro}}}}]
For $\lambda\in P_{+}$ and $w\in W$, define the elements $v_{\pm w\lambda}\in V(\pm\lambda)$
by
\[
v_{\pm w\lambda}:=\left(T_{w^{\mp1}}\right)^{\mp1}(v_{\pm\lambda}).
\]
These vectors $v_{\pm w\lambda}$ are called the extremal weight vector
of weight $\pm w\lambda$. In fact, we have
\[
\begin{array}{l}
v_{w\lambda}=f_{i_{1}}^{(\langle s_{i_{2}}\cdots s_{i_{\ell}}\lambda,h_{i_{1}}\rangle)}\cdots f_{i_{\ell-1}}^{(\langle s_{i_{\ell}}\lambda,h_{i_{\ell-1}}\rangle)}f_{i_{\ell}}^{(\langle\lambda,h_{i_{\ell}}\rangle)}.v_{\lambda}\\
v_{-w\lambda}=e_{i_{1}}^{(\langle s_{i_{2}}\cdots s_{i_{\ell}}\lambda,h_{i_{1}}\rangle)}\cdots e_{i_{\ell-1}}^{(\langle s_{i_{\ell}}\lambda,h_{i_{\ell-1}}\rangle)}e_{i_{\ell}}^{(\langle\lambda,h_{i_{\ell}}\rangle)}.v_{-\lambda}
\end{array}
\]
for $(i_{1},\dots,i_{\ell})\in I(w)$.
\end{defn}
The following proposition is well-known and easily checked.
\begin{prop}
\textup{(1)} For $\lambda\in P_{+}$, there exists a unique $\mathbb{Q}(q)$-bilinear
form $(\;,\;)_{\pm\lambda}\colon V(\pm\lambda)\times V(\pm\lambda)\to\mathbb{Q}(q)$
such that
\begin{align*}
 &  & \left(v_{\pm\lambda},v_{\pm\lambda}\right)_{\pm\lambda} & =1 & (x.v_{1},v_{2})_{\pm\lambda} & =(v_{1},\varphi(x).v_{2})_{\pm\lambda}
\end{align*}
$\text{for}\;v_{1},v_{2}\in V(\pm\lambda)\;\text{and}\;x\in\Uq$.
Moreover the form $(\;,\;)_{\pm\lambda}$ is non-degenerate and symmetric.

\textup{(2)} We have $(v_{\pm w\lambda},v_{\pm w\lambda})_{\pm\lambda}=1$
for all $w\in W$.
\end{prop}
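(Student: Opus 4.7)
The plan is to prove (1) in the standard order (uniqueness, existence, symmetry, non-degeneracy), then deduce (2) by induction on $\ell(w)$ using an $\mathbf{U}_q(\mathfrak{sl}_2)$-reduction. For uniqueness in (1), since $V(\pm\lambda)$ is generated by $v_{\pm\lambda}$ as a $\Uq$-module, any pairing obeying the axioms is determined by $(xv_{\pm\lambda},yv_{\pm\lambda})_{\pm\lambda}=(v_{\pm\lambda},\varphi(x)yv_{\pm\lambda})_{\pm\lambda}$ together with $(v_{\pm\lambda},v_{\pm\lambda})_{\pm\lambda}=1$ and the weight-space decomposition (the extremal weight space is one-dimensional, and distinct weight spaces are orthogonal under any contravariant form by a weight argument). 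For existence, I would construct the Shapovalov-type form on the Verma module $M(\pm\lambda)$ and observe that it descends to the irreducible quotient $V(\pm\lambda)$, with zero radical by irreducibility. Symmetry follows by uniqueness: the flipped pairing $(v_1,v_2)\mapsto(v_2,v_1)_{\pm\lambda}$ satisfies the same axioms (using $\varphi^2=\id$), so it agrees with the original. Non-degeneracy is immediate from the same irreducibility input, since the radical is a $\Uq$-submodule by contravariance not containing $v_{\pm\lambda}$.

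For (2), I induct on $\ell(w)$, the case $w=e$ being the normalization. For the inductive step, write $w=s_iw'$ with $\ell(w)=\ell(w')+1$, and set $m:=\langle w'\lambda,h_i\rangle$. Using a reduced expression of $w$ with $i$ at the front, the displayed formula for extremal weight vectors gives $v_{w\lambda}=f_i^{(m)}v_{w'\lambda}$. The length hypothesis together with standard properties of extremal weight vectors yields $m\geq 0$ and $e_iv_{w'\lambda}=0$, so $v_{w'\lambda}$ generates the irreducible $(m+1)$-dimensional $\mathbf{U}_q(\mathfrak{sl}_2)$-representation, in which $e_i^{(m)}f_i^{(m)}v_{w'\lambda}=v_{w'\lambda}$. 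Contravariance then gives
\[
(v_{w\lambda},v_{w\lambda})_\lambda=(v_{w'\lambda},e_i^{(m)}f_i^{(m)}v_{w'\lambda})_\lambda=(v_{w'\lambda},v_{w'\lambda})_\lambda,
\]
which equals $1$ by induction. The $V(-\lambda)$ case is parallel, with the roles of $e_i$ and $f_i$ interchanged.

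The main obstacle is verifying $e_iv_{w'\lambda}=0$ when $s_iw'>w'$; it is a standard consequence of $v_{w'\lambda}=(T_{(w')^{-1}})^{-1}(v_\lambda)$, but the argument requires some care to track the sign and normalization conventions of the braid group action on $V(\lambda)$. An alternative route that avoids this technicality is to establish $T_i$-invariance of $(\cdot,\cdot)_{\pm\lambda}$ directly, by verifying that $(v_1,v_2)\mapsto(T_iv_1,T_iv_2)_{\pm\lambda}$ again satisfies the contravariance axiom (checked on the Chevalley generators) and the correct normalization, and then invoking uniqueness; after which (2) follows at once from $v_{\pm w\lambda}=(T_{w^{\mp 1}})^{\mp 1}(v_{\pm\lambda})$ and the base case.
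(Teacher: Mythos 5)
The paper offers no proof of this proposition (it is dismissed as ``well-known and easily checked''), so there is nothing to compare against; your main line of argument --- uniqueness and non-degeneracy of the contravariant form from cyclicity and irreducibility, symmetry from uniqueness of the flipped form via $\varphi^2=\id$, and then part (2) by induction on $\ell(w)$ using $v_{w\lambda}=f_i^{(m)}v_{w'\lambda}$ with $m=\langle w'\lambda,h_i\rangle\geq 0$, $e_iv_{w'\lambda}=0$, and $e_i^{(m)}f_i^{(m)}v_{w'\lambda}=v_{w'\lambda}$ in the length-$(m+1)$ $i$-string --- is correct and is exactly the standard verification the authors have in mind. The fact $e_iv_{w'\lambda}=0$ that you flag follows from $(w')^{-1}\alpha_i>0$ (equivalent to $\ell(s_iw')>\ell(w')$): if $w'\lambda+\alpha_i$ were a weight of $V(\lambda)$, then so would be $\lambda+(w')^{-1}\alpha_i>\lambda$, a contradiction; no analysis of the braid operators is needed.

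One caution about your proposed ``alternative route'': it does not work as stated. The pulled-back form $\langle v_1,v_2\rangle:=(T_iv_1,T_iv_2)_{\pm\lambda}$ satisfies $\langle x.v_1,v_2\rangle=\langle v_1,\bigl(T_i^{-1}\circ\varphi\circ T_i\bigr)(x).v_2\rangle$, and since $\varphi\circ T''_{i,1}\circ\varphi=T'_{i,1}\neq T''_{i,1}$ (Lusztig, 37.2.4), the anti-involution $T_i^{-1}\circ\varphi\circ T_i$ is not $\varphi$; so the flipped form is contravariant for a different anti-involution and the uniqueness statement of part (1) cannot be invoked. Moreover the required normalization $(T_iv_{\pm\lambda},T_iv_{\pm\lambda})_{\pm\lambda}=1$ is essentially the $\ell(w)=1$ case of part (2), so this route is also circular. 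Keep the inductive $\mathbf{U}_q(\mathfrak{sl}_2)$-argument as the actual proof and drop the alternative.
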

\begin{defn}
For $\lambda\in P_{+}$ and $u,w\in W$ with $\pm(u\lambda-w\lambda)\in-\sum_{i\in I}\mathbb{Z}_{\geq0}\alpha_{i}$,
define the element $D_{\pm u\lambda,\pm w\lambda}\in\Uq^{-}$ by the
following property:
\[
(D_{\pm u\lambda,\pm w\lambda},x)_{L}=(v_{\pm u\lambda},x.v_{\pm w\lambda})_{\pm\lambda}
\]
for all $x\in\Uq^{-}$. The element $D_{\pm u\lambda,\pm w\lambda}$ is called a unipotent quantum minor and we note that $\wt\left(D_{\pm u\lambda,\pm w\lambda}\right)=\pm(u\lambda-w\lambda)$.
See \cite[Section 6]{MR2914878}.
\end{defn}
The unipotent quantum minors associated with lowest weight modules
are related with those associated with highest weight modules via
$*$-involution.
\begin{prop}
\label{p:lowtohi} For $\lambda\in P_{+}$ and $u,w\in W$ with $-u\lambda+w\lambda\in-\sum_{i\in I}\mathbb{Z}_{\geq0}\alpha_{i}$,
we have
\[
\ast D_{-u\lambda,-w\lambda}=D_{w\lambda,u\lambda}.
\]
\end{prop}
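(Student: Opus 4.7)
The strategy is to unravel the definitions of both sides, reduce to a matching of matrix coefficients on $V(\lambda)$ and $V(-\lambda)$, and identify the two modules via the Chevalley-type involution $\vee$. First, using the $\ast$-invariance of $(\,,\,)_L$ and the defining property of the unipotent quantum minors, for every $x\in\Uq^-$ one has
\[
(\ast D_{-u\lambda,-w\lambda},x)_L=(D_{-u\lambda,-w\lambda},\ast(x))_L=(v_{-u\lambda},\ast(x).v_{-w\lambda})_{-\lambda},
\]
while $(D_{w\lambda,u\lambda},x)_L=(v_{w\lambda},x.v_{u\lambda})_{\lambda}$. By non-degeneracy of $(\,,\,)_L$, the proposition reduces to the representation-theoretic identity $(v_{-u\lambda},\ast(x).v_{-w\lambda})_{-\lambda}=(v_{w\lambda},x.v_{u\lambda})_{\lambda}$ for all $x\in\Uq$.

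The key algebraic input is the set of compositional identities $\vee\varphi=\varphi\vee=\ast$ (and equivalently $\vee\ast=\ast\vee=\varphi$) on $\Uq$, verified directly on the generators. I would endow the underlying space of $V(\lambda)$ with the twisted $\Uq$-action $x\cdot v:=\vee(x).v$; in the resulting twisted module $V(\lambda)^{\vee}$, the vector $v_\lambda$ becomes a cyclic lowest weight vector of weight $-\lambda$, and integrability is preserved by the twist. By the universal property of the irreducible integrable lowest weight module $V(-\lambda)$, there is a unique $\Uq$-linear isomorphism $T\colon V(-\lambda)\to V(\lambda)^{\vee}$ with $T(v_{-\lambda})=v_\lambda$, and applying $T$ to the explicit divided-power formula for $v_{-w\lambda}$ together with $\vee(e_i)=f_i$ yields $T(v_{-w\lambda})=v_{w\lambda}$. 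The pullback $(v_1,v_2)\mapsto (T(v_1),T(v_2))_\lambda$ then satisfies the defining axioms of $(\,,\,)_{-\lambda}$: normalization is immediate, and the $\varphi$-invariance translates, in both directions, into the single equality $\varphi\vee=\vee\varphi$, so the pullback coincides with $(\,,\,)_{-\lambda}$ by uniqueness of the invariant form.

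Combining these ingredients, the reduced identity unfolds as
\[
(v_{-u\lambda},\ast(x).v_{-w\lambda})_{-\lambda}=(v_{u\lambda},\vee\ast(x).v_{w\lambda})_\lambda=(v_{u\lambda},\varphi(x).v_{w\lambda})_\lambda=(v_{w\lambda},x.v_{u\lambda})_\lambda,
\]
using successively the pullback equality of forms, the identity $\vee\ast=\varphi$, and the $\varphi$-adjointness together with the symmetry of $(\,,\,)_\lambda$. The main obstacle is the clean construction of the intertwiner $T$ in the Kac-Moody generality: one must invoke the universal property of integrable irreducible lowest weight modules to ensure that the $\vee$-twist of $V(\lambda)$ is genuinely isomorphic (not merely a quotient or a submodule) to $V(-\lambda)$. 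Once $T$ is available, the remaining argument is a routine calculation within the Klein-four group generated by the three involutions $\vee$, $\varphi$, $\ast$.
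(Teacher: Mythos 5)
Your proof is correct and is essentially the paper's argument: the paper likewise reduces to matrix coefficients via $\ast$-invariance of $(\,,\,)_L$, constructs the $\vee$-twisted module (the paper twists $V(-\lambda)$ and maps $V(\lambda)\to V(-\lambda)^{\vee}$ rather than the mirror-image identification you use), matches extremal weight vectors by the divided-power formula, and concludes by compatibility of the contravariant forms. Your explicit verification that the pullback form satisfies the defining axioms is a welcome touch, as the paper leaves that step implicit.
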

\begin{proof}
For all $x\in\Uq^{-}$, we have
\begin{align*}
\left(\ast D_{-u\lambda,-w\lambda},x\right)_{L} & =\left(D_{-u\lambda,-w\lambda},\ast x\right)_{L}\\
 & =(v_{-u\lambda},\ast(x).v_{-w\lambda})_{-\lambda}=(v_{-w\lambda},(x)^{\vee}.v_{-u\lambda})_{-\lambda}.
\end{align*}
We can consider the new $\Uq$-module $V(-\lambda)^{\vee}$ which
has the same underlying vector space as $V(-\lambda)$ and is endowed
with the action $\bullet$ of $\Uq$ given by $x\bullet v=(x)^{\vee}.v$
for $x\in\Uq$ and $v\in V(-\lambda)^{\vee}$. Then there exists the
$\Uq$-module isomorphism $\Phi:V(\lambda)\to V(-\lambda)^{\vee}$
given by $v_{\lambda}\mapsto v_{-\lambda}$. Moreover $\Phi(v_{w'\lambda})=v_{-w'\lambda}$
for all $w'\in W$. Indeed, for $(i_{1},\dots,i_{\ell})\in I(w')$,
\begin{align*}
\Phi(v_{w'\lambda}) & =\Phi\left(f_{i_{1}}^{(\langle s_{i_{2}}\cdots s_{i_{\ell}}\lambda,h_{i_{1}}\rangle)}\cdots f_{i_{\ell-1}}^{(\langle s_{i_{\ell}}\lambda,h_{i_{\ell-1}}\rangle)}f_{i_{\ell}}^{(\langle\lambda,h_{i_{\ell}}\rangle)}.v_{\lambda}\right)\\
 & =e_{i_{1}}^{(\langle s_{i_{2}}\cdots s_{i_{\ell}}\lambda,h_{i_{1}}\rangle)}\cdots e_{i_{\ell-1}}^{(\langle s_{i_{\ell}}\lambda,h_{i_{\ell-1}}\rangle)}e_{i_{\ell}}^{(\langle\lambda,h_{i_{\ell}}\rangle)}.v_{-\lambda}=v_{-w'\lambda}.
\end{align*}
Hence
\begin{align*}
\left(\ast D_{-u\lambda,-w\lambda},x\right)_{L} & =(v_{-w\lambda},(x)^{\vee}.v_{-u\lambda})_{-\lambda}\\
 & =(v_{-w\lambda},(\Phi\circ\Phi^{-1})(x\bullet v_{-u\lambda}))_{-\lambda}\\
 & =\left(v_{-w\lambda},\Phi(x.v_{u\lambda})\right)_{-\lambda}\\
 & =(v_{w\lambda},x.v_{u\lambda})_{-\lambda}=\left(D_{w\lambda,u\lambda},x\right)_{L}
\end{align*}
for all $x\in\Uq^{-}$. This proves the proposition. \end{proof}
\begin{prop}[{{\cite[Proposition 4.1]{MR1240605}}}]
\label{p:minor-dualcanonical} The unipotent quantum minors are elements
of $\mathbf{B}^{\mathrm{up}}$.
\end{prop}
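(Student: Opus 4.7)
The plan is to reduce to the highest-weight case via the $\ast$-involution and then to recognize $D_{u\lambda,w\lambda}$ as the image of a dual global basis element of $V(\lambda)$ under the adjoint of the evaluation map. First, Proposition \ref{p:lowtohi} gives $\ast D_{-u\lambda,-w\lambda}=D_{w\lambda,u\lambda}$, and since $\ast$ preserves $\mathbf{B}^{\mathrm{up}}$ (it realizes Kashiwara's $\ast$-crystal structure on $\mathscr{B}(\infty)$, so $\ast\Gup(b)=\Gup(b^{\ast})$), it suffices to treat $D_{u\lambda,w\lambda}$ with $u\lambda-w\lambda\in-\sum_{i\in I}\mathbb{Z}_{\geq 0}\alpha_i$.

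The key tool is the adjoint $\pi_\lambda^{\ast}\colon V(\lambda)\to\Uq^-$ of the evaluation map $\pi_\lambda\colon x\mapsto x.v_\lambda$, characterized by $(\pi_\lambda^{\ast}(v),x)_L=(v,x.v_\lambda)_\lambda$. Kashiwara's compatibility theorem \cite{MR1115118} asserts that $\pi_\lambda^{\ast}$ sends the upper global basis $\mathbf{B}^{\mathrm{up}}_\lambda$ of $V(\lambda)$ into $\mathbf{B}^{\mathrm{up}}\cup\{0\}$. The case $w=e$ is then immediate: the defining equality for $D_{u\lambda,\lambda}$ reads $D_{u\lambda,\lambda}=\pi_\lambda^{\ast}(v_{u\lambda})$, and because the extremal weight space $V(\lambda)_{u\lambda}$ is one-dimensional with $(v_{u\lambda},v_{u\lambda})_\lambda=1$, the vector $v_{u\lambda}$ coincides with its own upper global basis element $G^{\mathrm{up}}_\lambda(b_{u\lambda})$; hence $D_{u\lambda,\lambda}\in\mathbf{B}^{\mathrm{up}}$.

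For general $w$, pick $y_w\in \Uq^-$ with $y_w.v_\lambda=v_{w\lambda}$ (for instance the explicit product of divided powers attached to a reduced word of $w$). Then the identity $(D_{u\lambda,w\lambda},x)_L=(D_{u\lambda,\lambda},xy_w)_L$ encodes $D_{u\lambda,w\lambda}$ as a ``right translate'' of $D_{u\lambda,\lambda}$ by $y_w$, which after unpacking via the skew-derivations ${}_ie'$ of Definition \ref{d:qderiv} can be identified with $\pi_\lambda^{\ast}$ applied to a specific element of $\mathbf{B}^{\mathrm{up}}_\lambda$. The main obstacle is this last identification: converting the matrix-coefficient description into an honest image under $\pi_\lambda^{\ast}$ of a member of $\mathbf{B}^{\mathrm{up}}_\lambda$, which relies on the one-dimensionality of $V(\lambda)_{u\lambda}$, the bar-invariance of extremal vectors, and careful tracking of the $\ast$-crystal combinatorics. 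An alternative route, adopted in \cite{MR1240605}, is to establish quantum Pl\"ucker-type multiplicative identities that reduce the general case to products of $D_{u'\lambda,\lambda}$-type minors already handled above, thus bypassing the translation step.
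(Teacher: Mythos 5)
The paper offers no proof of this proposition: it is quoted from Kashiwara \cite{MR1240605} (see also \cite[Section 6]{MR2914878} for the formulation used here), so the comparison is with the standard argument in the literature rather than with an in-paper argument. Your reduction to the highest-weight minors via Proposition \ref{p:lowtohi} and the $\ast$-invariance of $\mathbf{B}^{\mathrm{up}}$ is correct, and so is the case $w=e$: the adjoint $\pi_\lambda^{\ast}$ of $x\mapsto x.v_\lambda$ does carry the upper global basis of $V(\lambda)$ into $\mathbf{B}^{\mathrm{up}}$ (this is the dual form of the compatibility $\Glow(b).v_{\lambda}\in\mathbf{B}^{\mathrm{low}}_{\lambda}\cup\{0\}$ combined with the definition of the upper global basis of $V(\lambda)$ as the basis dual to the lower one for $(\;,\;)_{\lambda}$), and $v_{u\lambda}$ is indeed its own upper global basis element since the extremal weight space is one-dimensional of norm $1$.

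The general case, however, is exactly where your argument stops, and the step you flag as ``the main obstacle'' is the entire content of the proposition. Writing $v_{w\lambda}=y_w.v_\lambda$ with $y_w=f_{i_1}^{(m_1)}\cdots f_{i_k}^{(m_k)}$ turns $D_{u\lambda,w\lambda}$ into a constant multiple of $({}_{i_k}e')^{m_k}\cdots({}_{i_1}e')^{m_1}$ applied to $D_{u\lambda,\lambda}$, because the adjoint of right multiplication by $f_i$ with respect to $(\;,\;)_L$ is $(1-q_i^2)^{-1}\,{}_ie'$ by Definition \ref{d:qderiv}. To conclude you need two facts you have not supplied: (i) the compatibility of \emph{maximal} powers of ${}_ie'$ with the dual canonical basis, namely that if $\varepsilon_i^{\ast}(b)=m$ then a suitably normalized $({}_ie')^{m}\Gup(b)$ equals $\Gup(\tilde e_i^{\ast m}b)$ (the dual statement of the compatibility of $\mathbf{B}^{\mathrm{low}}$ with maximally applied Kashiwara operators); and (ii) that at each stage the exponent $m_j$ in $y_w$ coincides with the $\varepsilon_{i_j}^{\ast}$-value of the crystal element reached so far, so that the derivations really are applied maximally. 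Both points are nontrivial and constitute the actual proof (this is how it is carried out in \cite[Section 6]{MR2914878}); the appeal to ``quantum Pl\"ucker-type identities'' does not substitute for them, since such identities are normally \emph{deduced from} membership in $\mathbf{B}^{\mathrm{up}}$ rather than used to establish it. As written, your proposal proves the proposition only for minors of the form $D_{\pm u\lambda,\pm\lambda}$.
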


We consider the unipotent quantum minors which belong to $\Uq^{-}(w)$.
\begin{prop}[{{{\cite[Proposition 3.4]{kimura2015remarks}}}}]
\label{p:intersection} For $w\in W$ and $\bm{i}=(i_{1},\dots,i_{\ell})\in I(w)$,
we have
\[
\Uq^{-}\cap T_{w}\left(\Uq^{-}\right)=\Uq^{-}\cap T_{i_{1}}\left(\Uq^{-}\right)\cap T_{i_{1}}T_{i_{2}}\left(\Uq^{-}\right)\cap\cdots\cap T_{i_{1}}T_{i_{2}}\cdots T_{i_{\ell}}\left(\Uq^{-}\right).
\]
\end{prop}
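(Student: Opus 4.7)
The plan is to proceed by induction on $\ell := \ell(w)$. The inclusion $\supset$ in the proposition is trivial, and when $\ell \leq 1$ the assertion is tautological. For the inductive step, write $w = s_{i_1}w'$ with $w' := s_{i_2}\cdots s_{i_\ell}$ and $(i_2,\ldots,i_\ell)\in I(w')$. Apply the inductive hypothesis to $w'$, then apply the $\mathbb{Q}(q)$-algebra automorphism $T_{i_1}$ of $\Uq$ termwise to both sides and intersect with $\Uq^-$; using $T_{i_1}T_{w'} = T_w$, the right-hand side of the proposition equals $\Uq^-\cap T_{i_1}(\Uq^-)\cap T_w(\Uq^-)$. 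Hence the proposition reduces to the single inclusion
\[
\Uq^-\cap T_w(\Uq^-)\subset T_{i_1}(\Uq^-). \qquad (\star)
\]
By Proposition~\ref{p:compatibility}(1), $(\star)$ is equivalent to showing that $e'_{i_1}$ annihilates every $x\in\Uq^-\cap T_w(\Uq^-)$.

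To verify this vanishing, I would invoke Lusztig's direct-sum decomposition
\[
\Uq^- = \bigoplus_{n\geq 0} f_{i_1}^{(n)}\Ker(e'_{i_1})
\]
(\cite[\S38.1.5]{Lus:intro}): any $x\in\Uq^-$ decomposes uniquely as $x=\sum_n f_{i_1}^{(n)}x_n$ with $x_n\in\Ker(e'_{i_1})$, and a direct application of the derivation rule defining $e'_{i_1}$ yields $e'_{i_1}(x)=0$ if and only if $x_n=0$ for all $n\geq 1$.

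The main obstacle is ruling out these higher components $x_n$ for $x\in\Uq^-\cap T_w(\Uq^-)$. The natural attack is via a Levendorski\u{\i}--Soibelman style ``generalized triangular decomposition''
\[
\Uq^- = \bigoplus_{\bm{c}\in\mathbb{Z}_{\geq 0}^{\ell}} F^{\low}(\bm{c},\bm{i})\cdot(\Uq^-\cap T_w(\Uq^-)),
\]
in which the leading monomial $F^{\low}(\bm{c},\bm{i})$ starts with $f_{i_1}^{(c_1)}$. Under this decomposition, an element lying purely inside $\Uq^-\cap T_w(\Uq^-)$ must correspond to $\bm{c}=\bm{0}$, so that its $f_{i_1}^{(n)}$-components in the Lusztig decomposition vanish for all $n\geq 1$ and thus $e'_{i_1}(x)=0$. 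Establishing this generalized triangular decomposition, which combines PBW structure theory with the $T_i$-invariance of the Lusztig form (Proposition~\ref{p:compatibility}(2)) and feeds back into $(\star)$ via the inductive setup above, is the technical heart of the argument.
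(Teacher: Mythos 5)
The paper gives no proof of this proposition (it is quoted verbatim from \cite[Proposition 3.4]{kimura2015remarks}), so your argument has to stand on its own. Its skeleton is sound: the inclusion $\supset$ is indeed trivial, the induction on $\ell$ via $w=s_{i_1}w'$ correctly reduces the whole statement to the single inclusion $(\star)\colon \Uq^-\cap T_w(\Uq^-)\subset T_{i_1}(\Uq^-)$, and by Proposition \ref{p:compatibility}(1) this is equivalent to $\Uq^-\cap T_w(\Uq^-)\subset\Ker e'_{i_1}$. You have located the crux exactly where it is.

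However, your mechanism for proving that crux does not work. Even granting both decompositions --- Lusztig's $\Uq^-=\bigoplus_{n\geq0}f_{i_1}^{(n)}\Ker(e'_{i_1})$ and the triangular decomposition $\Uq^-=\bigoplus_{\bm{c}}F^{\low}(\bm{c},\bm{i})\,(\Uq^-\cap T_w(\Uq^-))$ --- the inference ``$x$ lies in the $\bm{c}=\bm{0}$ summand, hence its $f_{i_1}^{(n)}$-components vanish for $n\geq1$'' is a non sequitur: these are two a priori unrelated direct-sum decompositions of $\Uq^-$, and membership in a summand of one constrains nothing about the components with respect to the other. To compare them you would need the inclusion $\bigoplus_{\bm{c}\colon c_1=0}F^{\low}(\bm{c},\bm{i})(\Uq^-\cap T_w(\Uq^-))\subset\Ker e'_{i_1}$; since each factor $F^{\low}((0,c_2,\dots,c_\ell),\bm{i})$ already lies in the subalgebra $\Ker e'_{i_1}=\Uq^-\cap T_{i_1}(\Uq^-)$, that inclusion is equivalent to $\Uq^-\cap T_w(\Uq^-)\subset\Ker e'_{i_1}$, i.e.\ to the very statement being proved. (What does follow formally is only $(\Uq^-\cap T_w(\Uq^-))\cap f_{i_1}\Uq^-=0$, which is not enough: a graded subspace can meet $f_{i_1}\Uq^-$ trivially without being contained in $\Ker e'_{i_1}$, so the higher components of an individual element need not vanish.) There is also a circularity risk in invoking the triangular decomposition as a black box: it is \cite[Theorem 1.1]{kimura2015remarks}, proved in the same circle of ideas. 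The standard way to close $(\star)$ is a direct argument that $T_{i_1}^{-1}$ maps $\Uq^-\cap T_w(\Uq^-)$ back into $\Uq^-$, using Lusztig's result that $\ell(uv)=\ell(u)+\ell(v)$ implies $T_u(\Uq^-\cap T_v(\Uq^-))\subset\Uq^-$ (see \cite[Chapter 40]{Lus:intro}); that is the route taken in the cited reference.
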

\begin{lem}
\label{l:perp} For $w\in W$, set $\Uq^{-}(w)^{\perp}:=\{x\in\Uq^{-}\mid(x,\Uq^{-}(w))_{L}=0\}$.
Then
\[
\Uq^{-}(w)^{\perp}=\Uq^{-}(w)(\Uq^{-}\cap T_{w}(\Uq^{-})\cap\Ker\varepsilon).
\]
\end{lem}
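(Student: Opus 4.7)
Set $B:=\Uq^{-}\cap T_{w}(\Uq^{-})$ and $B^{+}:=B\cap\Ker\varepsilon$, so the claim is $\Uq^{-}(w)^{\perp}=\Uq^{-}(w)\cdot B^{+}$. My plan is to reduce the assertion to two structural facts about the decomposition of $\Uq^{-}$:
\begin{itemize}
\item[(i)] the multiplication map $\Uq^{-}(w)\otimes_{\mathbb{Q}(q)}B\to\Uq^{-}$ is a $\mathbb{Q}(q)$-linear isomorphism;
\item[(ii)] for homogeneous $a_{1},a_{2}\in\Uq^{-}(w)$ and $b_{1},b_{2}\in B$, $(a_{1}b_{1},a_{2}b_{2})_{L}=(a_{1},a_{2})_{L}(b_{1},b_{2})_{L}$.
\end{itemize}

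Granting (i) and (ii), the inclusion $\Uq^{-}(w)\cdot B^{+}\subset\Uq^{-}(w)^{\perp}$ is immediate from (ii) with $b_{2}=1$, since $(b_{1},1)_{L}=\varepsilon(b_{1})=0$ for $b_{1}\in B^{+}$. The reverse inclusion follows by a dimension count in each weight space: by (i),
\[
\dim_{\mathbb{Q}(q)}(\Uq^{-}(w)\cdot B^{+})_{-\alpha}=\dim_{\mathbb{Q}(q)}(\Uq^{-})_{-\alpha}-\dim_{\mathbb{Q}(q)}\Uq^{-}(w)_{-\alpha}=\dim_{\mathbb{Q}(q)}\Uq^{-}(w)^{\perp}_{-\alpha},
\]
where the last equality uses the non-degeneracy of $(\,,\,)_{L}$.

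I would prove both (i) and (ii) by induction on $\ell(w)$ along a fixed reduced word $(i_{1},\dots,i_{\ell})\in I(w)$. The base case $w=s_{i}$ of (i) is exactly Proposition \ref{p:compatibility}(1), which gives $\Uq^{-}=\bigoplus_{n\geq 0}f_{i}^{(n)}\Ker e'_{i}$. The base case of (ii) is a direct calculation: using $(f_{i}x,y)_{L}=(1-q_{i}^{2})^{-1}(x,e'_{i}(y))_{L}$ and the fact that $e'_{i}$ is a twisted derivation annihilating $\Ker e'_{i}$, one peels off $f_{i}$'s from both slots of $(f_{i}^{(n_{1})}b_{1},f_{i}^{(n_{2})}b_{2})_{L}$ and reduces it to a $\delta_{n_{1},n_{2}}$-scalar multiple of $(b_{1},b_{2})_{L}$, whose coefficient is identified with $(f_{i}^{(n_{1})},f_{i}^{(n_{2})})_{L}$ via the specialization $b_{j}=1$. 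For the inductive step, write $w=s_{i_{1}}w'$ with $\ell(w')=\ell(w)-1$, invoke Proposition \ref{p:intersection} to present $B$ as the iterated intersection $\bigcap_{k}T_{i_{1}}\cdots T_{i_{k-1}}(\Ker e'_{i_{k}})$, and use Proposition \ref{p:compatibility}(2) (the $T_{i_{1}}$-invariance of $(\,,\,)_{L}$ on $\Ker e'_{i_{1}}$) to transport the inductive hypothesis for $w'$ across $T_{i_{1}}$.

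The hard part will be the inductive step for (ii), which requires reconciling the non-commutative multiplication in $\Uq^{-}$ with the weight-twisting action of $T_{i_{1}}$ and carefully tracking the $q^{(\,,\,)}$-factors arising when the twisted derivation $e'_{i_{1}}$ is applied to products of elements of $\Uq^{-}(w)$ with elements of $B\subset\Ker e'_{i_{1}}$; the task is to show that these scalars collapse exactly into the desired factorization $(a_{1},a_{2})_{L}(b_{1},b_{2})_{L}$, consistently with the weight constraint $\wt(a_{1})+\wt(b_{1})=\wt(a_{2})+\wt(b_{2})$ needed for nonzero pairing.
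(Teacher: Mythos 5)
Your proposal is correct and follows essentially the same route as the paper: the paper also reduces the lemma to the direct sum decomposition $\Uq^{-}=\Uq^{-}(w)\oplus\Uq^{-}(w)(\Uq^{-}\cap T_{w}(\Uq^{-})\cap\Ker\varepsilon)$ (your (i), which it simply cites as \cite[Theorem 1.1]{kimura2015remarks} rather than reproving), combines it with $\Uq^{-}=\Uq^{-}(w)\oplus\Uq^{-}(w)^{\perp}$, and verifies the inclusion $\Uq^{-}(w)(\Uq^{-}\cap T_{w}(\Uq^{-})\cap\Ker\varepsilon)\subset\Uq^{-}(w)^{\perp}$ by repeated use of Propositions \ref{p:compatibility} and \ref{p:intersection} — exactly the inductive $e'_{i}$-peeling and $T_{i}$-transport you describe for (ii). Your factorization (ii) is a slightly stronger statement than the orthogonality the paper needs, but the mechanism is identical, and the inductive step is less delicate than you fear since $\Ker e'_{i_{1}}$ is a subalgebra and $\Uq^{-}(w)=\sum_{n}f_{i_{1}}^{(n)}T_{i_{1}}(\Uq^{-}(w'))$.
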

\begin{proof}
By \cite[Theorem 1.1]{kimura2015remarks}, we have a decomposition
$\Uq^{-}=\Uq^{-}(w)\oplus\Uq^{-}(w)(\Uq^{-}\cap T_{w}(\Uq^{-})\cap\Ker\varepsilon)$
as a $\mathbb{Q}(q)$-vector space. By the way, we also have $\Uq^{-}=\Uq^{-}(w)\oplus\Uq^{-}(w)^{\perp}$.

Hence it suffices for us to prove the following inclusion:
\[
\Uq^{-}(w)(\Uq^{-}\cap T_{w}(\Uq^{-})\cap\Ker\varepsilon)\subset\Uq^{-}(w)^{\perp}.
\]
It is shown by using Proposition \ref{p:compatibility} and Proposition
\ref{p:intersection} repeatedly.  \end{proof}
\begin{prop}
\label{p:qminor} Let $\lambda\in P_{+}$ and $u_{1},u_{2},w\in W$.
Suppose that $u_{2}$ is less than or equal to $w$ with respect to
the weak right Bruhat order, that is, $\ell (w)=\ell (u_2)+\ell(u_2^{-1}w)$. Here $\ell(u)$ denotes the length of $u$ for $u\in W$. 
Then
\[
D_{-u_{1}\lambda,-u_{2}\lambda}\in\Uq^{-}(w).
\]
\end{prop}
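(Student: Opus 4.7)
The plan is to show that $D_{-u_{1}\lambda,-u_{2}\lambda} \in \Uq^{-}(w)$ by establishing that it is $(\,,\,)_{L}$-orthogonal to $\Uq^{-}(w)^{\perp}$; since $(\,,\,)_{L}$ is nondegenerate on each finite-dimensional weight space of $\Uq^{-}$ and the proof of Lemma \ref{l:perp} provides the orthogonal direct sum decomposition $\Uq^{-} = \Uq^{-}(w) \oplus \Uq^{-}(w)^{\perp}$ of $Q$-graded vector spaces, this orthogonality will force the required membership. Using Lemma \ref{l:perp}, a typical element of $\Uq^{-}(w)^{\perp}$ is a sum of products $ab$ with $a \in \Uq^{-}(w)$ and $b \in \Uq^{-} \cap T_{w}(\Uq^{-}) \cap \Ker \varepsilon$, and by the defining property of the unipotent quantum minor the task becomes showing
\[
(v_{-u_{1}\lambda},\, ab.v_{-u_{2}\lambda})_{-\lambda} = 0.
\]

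I would then aim for the stronger vanishing $b.v_{-u_{2}\lambda} = 0$. Here the weak right Bruhat hypothesis $\ell(w) = \ell(u_{2}) + \ell(u_{2}^{-1}w)$ enters: I pick $\bm{i} = (i_{1},\ldots,i_{\ell}) \in I(w)$ whose initial segment of length $\ell(u_{2})$ is a reduced word for $u_{2}$. Applying Proposition \ref{p:intersection} to this $\bm{i}$ yields the inclusion $\Uq^{-} \cap T_{w}(\Uq^{-}) \subset T_{u_{2}}(\Uq^{-})$, so I can write $b = T_{u_{2}}(b')$ for some $b' \in \Uq^{-}$. Homogeneity combined with $b \in \Ker \varepsilon$ forces $b$, and hence $b'$, to have nonzero weight, putting $b'$ into the augmentation ideal of $\Uq^{-}$, which annihilates the lowest weight vector $v_{-\lambda}$. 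Together with $v_{-u_{2}\lambda} = T_{u_{2}}(v_{-\lambda})$ and the intertwining property $T_{u_{2}}(x.v) = T_{u_{2}}(x).T_{u_{2}}(v)$, this gives
\[
b.v_{-u_{2}\lambda} = T_{u_{2}}(b').T_{u_{2}}(v_{-\lambda}) = T_{u_{2}}(b'.v_{-\lambda}) = 0,
\]
as required.

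The main conceptual step I anticipate is the opening orthogonality reduction, which rests on the full strength of Lemma \ref{l:perp} and on the fact that the graded restriction of $(\,,\,)_{L}$ to $\Uq^{-}(w)^{\perp}$ is still nondegenerate, so that pairing-to-zero with all of $\Uq^{-}(w)^{\perp}$ is strong enough to detect membership in $\Uq^{-}(w)$. Once that framework is in place, the weak right Bruhat hypothesis cleanly extracts the $T_{u_{2}}$-factor through Proposition \ref{p:intersection}, and the annihilation of $v_{-\lambda}$ by the augmentation ideal finishes the argument without further calculation.
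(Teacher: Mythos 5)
Your proof is correct and follows essentially the same route as the paper's: both reduce the claim via Lemma \ref{l:perp} and the orthogonal decomposition $\Uq^{-}=\Uq^{-}(w)\oplus\Uq^{-}(w)^{\perp}$ to showing $(v_{-u_{1}\lambda},\Uq^{-}(w)^{\perp}.v_{-u_{2}\lambda})_{-\lambda}=0$, and both establish the stronger vanishing $b.v_{-u_{2}\lambda}=0$ for $b\in\Uq^{-}\cap T_{w}(\Uq^{-})\cap\Ker\varepsilon$ by choosing $\bm{i}\in I(w)$ whose initial segment is a reduced word for $u_{2}$, invoking Proposition \ref{p:intersection} to write $b=T_{u_{2}}(b')$ with $b'\in\Uq^{-}$ of nonzero weight, and using $v_{-u_{2}\lambda}=T_{u_{2}}(v_{-\lambda})$.
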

\begin{proof}
It suffices to show that
\[
(v_{-u_{1}\lambda},\Uq^{-}(w)^{\perp}.v_{-u_{2}\lambda})_{-\lambda}=0.
\]
For every homogeneous element $x\in\Uq^{-}\cap T_{w}(\Uq^{-})\cap\Ker\varepsilon$,
we have $u_{2}^{-1}\wt x\in\sum_{i\in I}\mathbb{Z}_{\leq0}\alpha_{i}$
by Proposition \ref{p:intersection}. Here note that there exists $\bm{i}=(i_{1},i_{2},\dots,i_{\ell})\in I(w)$ such that $(i_{1},i_{2},\dots,i_{k})\in I(u_{2})$ for some $k\in\{1,\dots,\ell\}$. Therefore,
\[
x.v_{-u_{2}\lambda}=T_{u_{2}}\left((T_{u_{2}})^{-1}(x).v_{-\lambda}\right)=0.
\]
Hence Lemma \ref{l:perp} implies the assertion.  \end{proof}
\begin{lem}
\label{l:form-preserve} Let $w\in W$. Then we have
\[
\left(x,y\right)_{L}=\left(\Theta_{w^{-1}}(x),\Theta_{w^{-1}}(y)\right)_{L}\;\text{for all}\;x,y\in\Uq^{-}(w).
\]
\end{lem}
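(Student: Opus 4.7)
The cleanest plan is to reduce to checking the equality on a basis, and the dual PBW basis $\{F^{\mathrm{up}}(\bm{c},\bm{i})\}_{\bm{c}\in\mathbb{Z}_{\geq 0}^{\ell}}$ of $\Uq^-(w)$ associated to a fixed $\bm{i}\in I(w)$ is the natural choice, because Proposition \ref{p:PBW} tells us exactly what $\Theta_{w^{-1}}$ does to it, and the orthogonality relation \eqref{norm} tells us exactly what $(\,,\,)_L$ does to it. By bilinearity it is enough to verify
\[
\bigl(F^{\mathrm{up}}(\bm{c},\bm{i}),F^{\mathrm{up}}(\bm{c}',\bm{i})\bigr)_L
=\bigl(\Theta_{w^{-1}}(F^{\mathrm{up}}(\bm{c},\bm{i})),\Theta_{w^{-1}}(F^{\mathrm{up}}(\bm{c}',\bm{i}))\bigr)_L
\]
for all $\bm{c},\bm{c}'\in\mathbb{Z}_{\geq 0}^{\ell}$.

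First I would compute the left-hand side. Setting $N(\bm{c},\bm{i}):=\prod_{k=1}^{\ell}\prod_{j=1}^{c_k}(1-q_{i_k}^{2j})^{-1}$, the definition $F^{\mathrm{up}}(\bm{c},\bm{i})=F^{\mathrm{low}}(\bm{c},\bm{i})/N(\bm{c},\bm{i})$ combined with \eqref{norm} gives
\[
\bigl(F^{\mathrm{up}}(\bm{c},\bm{i}),F^{\mathrm{up}}(\bm{c}',\bm{i})\bigr)_L=\delta_{\bm{c},\bm{c}'}\,N(\bm{c},\bm{i})^{-1}.
\]
Next I would compute the right-hand side. By Proposition \ref{p:PBW}, $\Theta_{w^{-1}}(F^{\mathrm{up}}(\bm{c},\bm{i}))=F^{\mathrm{up}}(\bm{c}^{\mathrm{rev}},\bm{i}^{\mathrm{rev}})$, and the same formula applied with $\bm{i}^{\mathrm{rev}}\in I(w^{-1})$ shows that this is an orthogonal family inside $\Uq^-(w^{-1})$ with
\[
\bigl(F^{\mathrm{up}}(\bm{c}^{\mathrm{rev}},\bm{i}^{\mathrm{rev}}),F^{\mathrm{up}}((\bm{c}')^{\mathrm{rev}},\bm{i}^{\mathrm{rev}})\bigr)_L=\delta_{\bm{c},\bm{c}'}\,N(\bm{c}^{\mathrm{rev}},\bm{i}^{\mathrm{rev}})^{-1}.
\]

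Then the only remaining point is the elementary observation that reversing both the root sequence $\bm{i}$ and the exponent sequence $\bm{c}$ leaves the product defining $N$ unchanged, so $N(\bm{c}^{\mathrm{rev}},\bm{i}^{\mathrm{rev}})=N(\bm{c},\bm{i})$; the two expressions above therefore agree. I do not foresee a real obstacle here: Proposition \ref{p:PBW} has already done the substantive work of controlling how $\Theta_{w^{-1}}$ moves PBW basis vectors, and the rest is bookkeeping. (An alternative, more intrinsic route would be to decompose $\Theta_{w^{-1}}=T_{w^{-1}}\circ S\circ\vee$ and invoke Proposition \ref{p:compatibility}(2) together with the identification of $\Uq^-(w)$ with $\bigcap_{k}\Ker$ of appropriate $e'$-type operators via Proposition \ref{p:intersection}; but this is more circuitous, so I would keep the PBW-basis proof above.)
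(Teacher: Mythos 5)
Your argument is correct and is exactly the paper's intended proof: the paper's own proof of Lemma \ref{l:form-preserve} simply says it follows immediately from Proposition \ref{p:PBW}, i.e.\ from the fact that $\Theta_{w^{-1}}$ permutes the orthogonal dual PBW basis while preserving norms, which is precisely the computation you carry out. The only difference is that you spell out the bookkeeping (the invariance of $N(\bm{c},\bm{i})$ under simultaneous reversal) that the paper leaves implicit.
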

\begin{proof}
This follows immediately from Proposition \ref{p:PBW}.  \end{proof}
The following theorem states that the quantum twist maps induce bijections between the unipotent quantum minors satisfying certain conditions on elements of the Weyl group. This is a quantum analogue of \cite[Lemma 2.25]{MR1652878}. 
\begin{thm}
\label{t:mainthm2} Let $\lambda\in P_{+}$ and $u_{1},u_{2}\in W$.
Suppose that $u_{1}$ and $u_{2}$ are less than or equal to $w$
with respect to the weak right Bruhat order. Then we have
\[
\Theta_{w^{-1}}(D_{-u_{1}\lambda,-u_{2}\lambda})=D_{-w^{-1}u_{2}\lambda,-w^{-1}u_{1}\lambda}.
\]
\end{thm}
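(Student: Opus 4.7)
My plan is to verify that both sides of the asserted equality lie in $\mathbf{B}^{\up}\cap \Uq^{-}(w^{-1})$ and then to establish their equality by comparing $(\,,\,)_{L}$-pairings against an arbitrary $y\in\Uq^{-}(w^{-1})$.

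First I check membership. By Proposition \ref{p:qminor} (using $u_{2}\le w$) and Proposition \ref{p:minor-dualcanonical}, $D_{-u_{1}\lambda,-u_{2}\lambda}\in \mathbf{B}^{\up}\cap \Uq^{-}(w)$. By Proposition \ref{p:PBW}, $\Theta_{w^{-1}}$ restricts to a $\mathbb{Q}(q)$-algebra anti-isomorphism $\Uq^{-}(w)\xrightarrow{\sim}\Uq^{-}(w^{-1})$, and by Theorem \ref{t:mainthm} it sends $\mathbf{B}^{\up}$-elements to $\mathbf{B}^{\up}$-elements, so the left side lies in $\mathbf{B}^{\up}\cap \Uq^{-}(w^{-1})$. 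For the right side, the hypothesis $\ell(w)=\ell(u_{j})+\ell(u_{j}^{-1}w)$ combined with $\ell(u_{j}^{-1}w)=\ell(w^{-1}u_{j})$ and $\ell(w)=\ell(w^{-1})$ gives $\ell(w^{-1})=\ell(w^{-1}u_{j})+\ell(u_{j})$ for $j=1,2$, so $w^{-1}u_{1},w^{-1}u_{2}\le w^{-1}$ in the weak right Bruhat order; Propositions \ref{p:qminor} and \ref{p:minor-dualcanonical} applied to $w^{-1}$ then put $D_{-w^{-1}u_{2}\lambda,-w^{-1}u_{1}\lambda}$ into $\mathbf{B}^{\up}\cap \Uq^{-}(w^{-1})$.

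Next I reduce to a pairing identity. By Lemma \ref{l:perp} and the non-degeneracy of $(\,,\,)_{L}$ on $\Uq^-$, two elements of $\Uq^{-}(w^{-1})$ coincide as soon as they pair identically with every $y\in\Uq^{-}(w^{-1})$. Fix such a $y$. Applying Lemma \ref{l:form-preserve} with $w$ replaced by $w^{-1}$ (noting $(\Theta_{w^{-1}})^{-1}=\Theta_{w}$) and invoking the defining property of a unipotent quantum minor gives
\[
(\Theta_{w^{-1}}(D_{-u_{1}\lambda,-u_{2}\lambda}),y)_{L}=(D_{-u_1\lambda,-u_2\lambda},\Theta_w(y))_L=(v_{-u_{1}\lambda},\Theta_{w}(y).v_{-u_{2}\lambda})_{-\lambda},
\]
and directly $(D_{-w^{-1}u_{2}\lambda,-w^{-1}u_{1}\lambda},y)_{L}=(v_{-w^{-1}u_{2}\lambda},y.v_{-w^{-1}u_{1}\lambda})_{-\lambda}$. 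Thus the theorem reduces to the identity
\begin{equation*}
(v_{-u_{1}\lambda},\Theta_{w}(y).v_{-u_{2}\lambda})_{-\lambda}=(v_{-w^{-1}u_{2}\lambda},y.v_{-w^{-1}u_{1}\lambda})_{-\lambda}, \qquad y\in \Uq^{-}(w^{-1}). \qquad (\ast)
\end{equation*}

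To prove $(\ast)$, I would use three ingredients: (i) the Lusztig braid operators $T_{u}$ act as isometries of the contravariant form $(\,,\,)_{-\lambda}$ on $V(-\lambda)$ (cf.\ \cite[Chapter 5]{Lus:intro}); (ii) the extremal weight formula $v_{-u\lambda}=T_{u}(v_{-\lambda})$; and (iii) the length-additive factorizations $T_{w}=T_{u_{j}}T_{u_{j}^{-1}w}$ and $T_{w^{-1}}=T_{w^{-1}u_{j}}T_{u_{j}^{-1}}$ provided by $u_{j}\le w$ and $w^{-1}u_{j}\le w^{-1}$. Writing
\[
\Theta_{w}(y).v_{-u_{2}\lambda}=T_{w}(S(y^{\vee})).T_{u_{2}}(v_{-\lambda})=T_{u_{2}}\bigl(T_{u_{2}^{-1}w}(S(y^{\vee})).v_{-\lambda}\bigr)
\]
and transporting $T_{u_{2}}$ to the first slot via the isometry, the left side of $(\ast)$ becomes a pairing centered at $v_{-\lambda}$; a parallel manipulation on the right side of $(\ast)$, using the symmetry of $(\,,\,)_{-\lambda}$ and Lemma \ref{l:T-antipode} ($T_{i}\circ S\circ\vee=S\circ\vee\circ T_{i}^{-1}$), brings the two expressions into a common form. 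The main obstacle is the precise bookkeeping in this last step: $T_{u}$ does not commute with $S\circ\vee$, the vectors $T_{u_{2}}^{-1}(v_{-u_{1}\lambda})$ obtained after transport are not literally extremal when $u_{1}$ and $u_{2}$ are not comparable in the Bruhat order, and one must carefully track the scalars and $q$-powers coming from the braid-group factorizations; it is here that the assumption $u_{1},u_{2}\le w$ is used essentially.
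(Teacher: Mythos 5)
Your setup is sound and matches the paper's opening moves: the membership of both sides in $\mathbf{B}^{\mathrm{up}}\cap\Uq^{-}(w^{-1})$ is correctly established, and the reduction to the pairing identity $(\ast)$ via Lemma \ref{l:form-preserve} and the non-degeneracy of $(\,,\,)_{L}$ on $\Uq^{-}(w^{-1})$ is exactly right. The genuine gap is that $(\ast)$ is never actually proved: your sketch of a direct computation with braid operators runs into precisely the obstruction you yourself name --- after transporting $T_{u_{2}}$ across the form, the resulting vectors are not extremal weight vectors when $u_{1}$ and $u_{2}$ are incomparable, and the scalars coming from the interaction of $T_{u}$ with $S\circ\vee$ are not controlled. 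Declaring this ``bookkeeping'' does not close the argument; as stated, the exact identity $(\ast)$ (with coefficient $1$) is not within reach of the method you describe.

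The paper sidesteps exactly this difficulty, and the key idea is one you set up but do not use. First apply $\varphi$ to rewrite the left side of $(\ast)$ as $\left(v_{-u_{2}\lambda},(\varphi\circ\Theta_{w})(y).v_{-u_{1}\lambda}\right)_{-\lambda}$. Since $\varphi\circ\Theta_{w}$ is an algebra \emph{automorphism} of $\Uq$, one may twist $V(-\lambda)$ by it; the twisted module is isomorphic to $V(-\lambda)$ via $v_{-\lambda}\mapsto v_{-w\lambda}$, and because the extremal weight spaces $-W\lambda$ of $V(-\lambda)$ are one-dimensional, each $v_{-u_{i}\lambda}$ is carried to $v_{-w^{-1}u_{i}\lambda}$ only \emph{up to a nonzero scalar}. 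This yields $\Theta_{w^{-1}}(D_{-u_{1}\lambda,-u_{2}\lambda})=\zeta\, D_{-w^{-1}u_{2}\lambda,-w^{-1}u_{1}\lambda}$ for some $\zeta\in\Qq^{\times}$ with no bookkeeping at all, and then $\zeta=1$ follows because both sides belong to $\mathbf{B}^{\mathrm{up}}$ (Theorem \ref{t:mainthm} and Proposition \ref{p:minor-dualcanonical}) --- the membership facts from your first paragraph. You should restructure your argument to prove $(\ast)$ only up to a scalar and let the dual canonical basis normalize it, rather than attempting the exact identity directly.
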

\begin{proof}
By Proposition \ref{p:qminor}, we have $D_{-u_{1}\lambda,-u_{2}\lambda}\in\Uq^{-}(w)$.
Therefore we have $\Theta_{w^{-1}}(D_{-u_{1}\lambda,-u_{2}\lambda})\in\Uq^{-}(w^{-1})$.
By Lemma \ref{l:form-preserve}, for $x\in\Uq^{-}(w^{-1})$,
\begin{align*}
(\Theta_{w^{-1}}(D_{-u_{1}\lambda,-u_{2}\lambda}),x)_{L} & =\left(D_{-u_{1}\lambda,-u_{2}\lambda},\Theta_{w}(x)\right)_{L}\\
 & =\left(v_{-u_{1}\lambda},\Theta_{w}(x).v_{-u_{2}\lambda}\right)_{-\lambda}\\
 & =\left(v_{-u_{2}\lambda},\left(\varphi\circ\Theta_{w}\right)(x).v_{-u_{1}\lambda}\right)_{-\lambda}.
\end{align*}
Now $\varphi\circ\Theta_{w}$ is a $\mathbb{Q}(q)$-algebra automorphism
of $\Uq$. Hence we can consider the new $\Uq$-module $V'(-\lambda)$
which has the same underlying vector space as $V(-\lambda)$ and is
endowed with the action $\star$ of $\Uq$ given by $x\star v=(\varphi\circ\Theta_{w})(x).v$
for $x\in\Uq$ and $v\in V'(-\lambda)$.

Then there exists a $\Uq$-module isomorphism $V(-\lambda)\to V'(-\lambda)$
given by $v_{-\lambda}\mapsto v_{-w\lambda}$. Note that $(\varphi\circ\Theta_{w})(q^{h})=q^{w(h)}$
for $h\in P^{*}$. Hence the vector $v_{-u_{i}\lambda}\in V'(-\lambda)$
is a vector of weight $-w^{-1}u_{i}\lambda$ ($i=1,2$). Moreover
it is well-known that the weight space of $V(-\lambda)$ of weight
$\mu$ is $1$-dimensional for all $\mu\in-W\lambda$. Therefore as
in the proof of Proposition \ref{p:lowtohi} we have
\begin{align*}
\left(v_{-u_{2}\lambda},\left(\varphi\circ\Theta_{w}\right)(x).v_{-u_{1}\lambda}\right)_{-\lambda} & =\zeta\left(v_{-w^{-1}u_{2}\lambda},x.v_{-w^{-1}u_{1}\lambda}\right)_{-\lambda}\\
 & =\zeta\left(D_{-w^{-1}u_{2}\lambda,-w^{-1}u_{1}\lambda},x\right)_{L}
\end{align*}
for some $\zeta\in\mathbb{Q}(q)^{\times}$ and all $x\in\Uq^{-}(w^{-1})$.
By our assumption, $w^{-1}u_{1}$ is less than or equal to $w^{-1}$
with respect to the weak right Bruhat order. Therefore $D_{-w^{-1}u_{2}\lambda,-w^{-1}u_{1}\lambda}\in\Uq^{-}(w^{-1})$
by Proposition \ref{p:qminor}. Hence $\Theta_{w^{-1}}(D_{-u_{1}\lambda,-u_{2}\lambda})=\zeta D_{-w^{-1}u_{2}\lambda,-w^{-1}u_{1}\lambda}$.

On the other hand, by Theorem \ref{t:mainthm}, $\Theta_{w^{-1}}(D_{-u_{1}\lambda,-u_{2}\lambda})\in\mathbf{B}^{\mathrm{up}}\cap\Uq^{-}(w^{-1})$.
Therefore, by Proposition \ref{p:minor-dualcanonical}, $\zeta=1$
and $\Theta_{w^{-1}}(D_{-u_{1}\lambda,-u_{2}\lambda})=D_{-w^{-1}u_{2}\lambda,-w^{-1}u_{1}\lambda}$.
 \end{proof}
As a corollary of Theorem \ref{t:mainthm2}, we show that quantum twist maps preserve a quantum analogue of specific determinantal identities, called a quantum $T$-system.
\begin{defn}
Let $w\in W$ and $\bm{i}=(i_1,\dots, i_{\ell})\in I(w)$. For $0\leq b \leq d\leq \ell$ and $j\in I$, we set 
\[
D\left(b, d; j\right)(=D^{\bm{i}}(b, d; j)):=D_{-\mu(b, j), -\mu(d, j)},
\]
here $\mu(b, j)(=\mu^{\bm{i}}(b, j)):=s_{i_1}\cdots s_{i_{b}}\varpi_j$. 
Moreover, when $i_b=i_d=j$, we write $D(b, d)(=D^{\bm{i}}(b, d)):=D(b, d; j)$. Note that $D(0, d)=D_{-\varpi_{i_d}, -s_{i_1}\cdots s_{i_d}\varpi_{i_d}}$ for $1\leq d\leq \ell$. For $1\leq d\leq \ell$ and $j\in I$, write
\begin{align*}
b^-&:=\max\left(\{0\}\cup \{1\leq b'\leq b-1\mid i_{b'}=i_b\}\right),\\
b^-(j)&:=\max\left(\{0\}\cup \{1\leq b'\leq b-1\mid i_{b'}=j\}\right).
\end{align*}
Then, for $0\leq b \leq d\leq \ell$ and $j\in I$, we have $D\left(b, d; j\right)=D\left(b^-(j), d^-(j)\right)$. 
\end{defn}
\begin{prop}[{\cite[Proposition 5.5]{GLS:qcluster}}]\label{p:T-sys}
Let $w\in W$ and $\bm{i}=(i_1,\dots, i_{\ell})\in I(w)$. Fix an arbitrary total order on $I$. Suppose that the integers $b, d$ satisfy that $1\leq b<d\leq \ell$ and $i_b=i_d=i$. Then we have
\begin{align}
q^{A}D(b^-, d^-)D(b, d)&=q_i^{-1}q^{B}D(b, d^-)D(b^-, d)+q^C\dprod_{j\in I\setminus\{i\}}D(b^-(j), d^-(j))^{-a_{ji}}\label{T-sys1}\\
&=q_i^{-1}q^{B'}D(b^-, d)D(b, d^-)+q^C\dprod_{j\in I\setminus\{i\}}D(b^-(j), d^-(j))^{-a_{ji}},\label{T-sys2}
\end{align}
here 
\begin{align*}
&a_{ji}=\langle h_j, \alpha_i\rangle,&&&
&A=\left(\mu(b, i), \mu(b^-, i)-\mu(d^-, i)\right),\\
B=&\left(\mu(b^-, i), \mu(b, i)-\mu(d^-, i)\right),&&&
&B'=\left(\mu(b, i), \mu(b^-, i)-\mu(d, i)\right),
\end{align*}
\begin{align*}
C=&\sum_{j\in I\setminus\{i\}}\left(\begin{array}{c}
-a_{ji}\\
2
\end{array}\right)\left(\mu(b, j),\mu(b, j)-\mu(d, j)\right)\\
&+\sum_{j, k\in I\setminus\{i\};k<j}a_{ji}a_{ki}\left(\mu(b, j), \mu(b, k)-\mu(d, k)\right),
\end{align*}
and $\dprod$ denotes a product with respect to the increasing order from left to right. This system of equalities is called the quantum $T$-system in $\Uq^-(w)$.
\end{prop}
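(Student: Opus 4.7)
The plan is to establish the quantum $T$-system by translating each unipotent quantum minor into a matrix coefficient on a fundamental representation $V(-\varpi_j)$ and reducing the identity to an explicit computation inside tensor products of such representations, in the spirit of the classical $T$-system identity \cite[Lemma 2.25]{MR1652878}.

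First I would use the defining property $(D(b,d;j), x)_L = (v_{-\mu(b,j)}, x.v_{-\mu(d,j)})_{-\varpi_j}$ to rewrite both (\ref{T-sys1}) and (\ref{T-sys2}) paired against an arbitrary $x \in \Uq^-$. Using the coproduct and the compatibility of $(\,,\,)_L$ with multiplication in $\Uq^-$, each product such as $D(b^-, d^-)D(b,d)$ or $D(b, d^-)D(b^-,d)$ expands as a matrix coefficient of an iterated coproduct $\Delta^{(k)}(x)$ evaluated on a tensor of extremal weight vectors, with a $q$-twist depending only on the weights of the factors. The observation $\mu(b, i) = s_{i_1} \cdots s_{i_{b^-}} s_i \varpi_i$ and $\mu(b^-, i) = s_{i_1} \cdots s_{i_{b^-}} \varpi_i$ then allows me to factor out the Lusztig operators $T_{s_{i_1}\cdots s_{i_{b^-}}}$ and $T_{s_{i_1}\cdots s_{i_{d^-}}}$ and reduce the problem to a single relation inside $V(-\varpi_i) \otimes V(-\varpi_i)$, coupled to the neighbouring $V(-\varpi_j)$ with $a_{ji} \neq 0$.

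The core rank-two computation is then to expand an appropriate $f_i^{(n)}.(v_{-\varpi_i} \otimes v_{-\varpi_i})$ via the coproduct, identify the residual ``cross vector'' at the relevant weight, and show that its dual coefficient is precisely $\prod_{j \neq i} D(b^-(j), d^-(j))^{-a_{ji}}$. The exponents $-a_{ji}$ arise naturally because the $s_i$-string of extremal weight vectors in a tensor product of fundamental modules is twisted by $v_{-\varpi_j}^{\otimes(-a_{ji})}$ contributions from neighbouring vertices, exactly as in the classical Fomin-Zelevinsky identity. Pairing against $x$ converts these tensor factors into the asserted product of minors.

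The main obstacle will be the precise determination of the $q$-powers $A, B, B', C$. They originate from three sources: the quasi-commutativity $D_1 D_2 = q^{\ast} D_2 D_1$ of unipotent quantum minors (bilinear in their weights via $(\,,\,)$), the $q$-twist in the dual pairing under the coproduct, and the $T$-equivariance of the form $(\,,\,)_{\pm\lambda}$. A systematic weight bookkeeping using $\wt D(b,d;j) = -(\mu(b,j) - \mu(d,j))$ should reproduce the stated formulas, and the discrepancy between (\ref{T-sys1}) and (\ref{T-sys2}) should come solely from the quasi-commutation scalar of $D(b, d^-)$ with $D(b^-, d)$. I expect matching these exponents, together with verifying the Clebsch-Gordan type decomposition of $V(-\varpi_i) \otimes V(-\varpi_i)$ that produces the exact coefficient in front of $\prod_{j \neq i} D(b^-(j), d^-(j))^{-a_{ji}}$, to be the main technical challenge.
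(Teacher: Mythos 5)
The paper does not actually prove this proposition: it is imported verbatim from \cite[Proposition 5.5]{GLS:qcluster}, and the authors only remark afterwards that the argument given there (for symmetric $\mathfrak{g}$, in a different convention) carries over to the symmetrizable case. So there is no in-paper proof to compare with, and your proposal must be judged as a proof of the GLS identity itself. As such it is a sensible outline of the classical strategy, but it leaves genuine gaps at exactly the points where the quantum statement is hard.

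Concretely: (i) the reduction step ``factor out the Lusztig operators $T_{s_{i_1}\cdots s_{i_{b^-}}}$'' does not work as stated, because for $x\in\Uq^-$ one has $x.v_{-u\lambda}=T_u\bigl(T_u^{-1}(x).v_{-\lambda}\bigr)$ with $T_u^{-1}(x)\notin\Uq^-$ in general; unlike the classical case, where generalized minors are matrix coefficients on the group and can be translated by Weyl group representatives, the unipotent quantum minors $D_{-u\lambda,-v\lambda}$ live only in $\Uq^-$ and cannot simply be shifted to a rank-one situation. (ii) The assertion that the ``cross term'' in $V(-\varpi_i)\otimes V(-\varpi_i)$ contributes exactly $\dprod_{j\ne i}D(b^-(j),d^-(j))^{-a_{ji}}$ with coefficient a \emph{pure} power of $q$ is precisely the content of the identity: the weight computation $\varpi_i+s_i\varpi_i=\sum_{j\ne i}(-a_{ji})\varpi_j$ only predicts the weight of that term, not that it equals the stated ordered product of minors with no $q$-integer coefficients; one needs either a leading-term/dual-canonical-basis argument for the product of quasi-commuting minors or a genuine computation. (iii) The exponents $A,B,B',C$ cannot be deferred as ``bookkeeping'': $C$ depends on the chosen total order on $I$ through the ordered product, and determining all four is where most of the work lies. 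Finally, the relevant classical statement is Fomin--Zelevinsky's generalized minor identity, not their Lemma 2.25 (the latter is the twist-map statement quantized in Theorem \ref{t:mainthm2}). In short, the proposal correctly locates the difficulties but resolves none of them; to complete it you would essentially have to redo the GLS argument, checking that it survives the symmetrizable setting and the present normalization of $(\ ,\ )_L$.
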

\begin{rem}
Note that our convention is different from the one in \cite{GLS:qcluster}, and Gei\ss-Leclerc-Schr\"{o}er  always assume that $\mathfrak{g}$ is symmetric. Nevertheless, we can prove the equality above in the same manner as in \cite{GLS:qcluster}. 
\end{rem}
\begin{cor}\label{c:T-sys}
The quantum twist map $\Theta_{w^{-1}}$ maps the quantum $T$-system in $\Uq^-(w)$ to the one in $\Uq^-(w^{-1})$.
\end{cor}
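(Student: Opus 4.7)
The plan is to apply $\Theta_{w^{-1}}$ to each equation of the quantum $T$-system for $(\bm{i}, (b, d))$ and to verify that the result is an equation of the quantum $T$-system for $\bm{i}^{\mathrm{rev}}$ at the associated pair. The key computational input is Theorem \ref{t:mainthm2}. Writing $u_k := s_{i_1}\cdots s_{i_k}$, the Weyl-group identity $w^{-1}u_k = s_{i_\ell}\cdots s_{i_{k+1}}$ gives $w^{-1}\mu^{\bm{i}}(k, j) = \mu^{\bm{i}^{\mathrm{rev}}}(\ell - k, j)$, so Theorem \ref{t:mainthm2} yields
\[
\Theta_{w^{-1}}(D^{\bm{i}}(b', d'; j)) = D_{-w^{-1}\mu^{\bm{i}}(d', j),\, -w^{-1}\mu^{\bm{i}}(b', j)} = D^{\bm{i}^{\mathrm{rev}}}(\ell - d', \ell - b'; j).
\]
Fix a pair $(b, d)$ with $i_b = i_d = i$ and $b < d$, and set $M_K := \ell - d + 1$, $M_L := \ell - b + 1$, the positions of $i$ in $\bm{i}^{\mathrm{rev}}$ corresponding to $d$ and $b$. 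Using $s_j\varpi_i = \varpi_i$ for $j \neq i$ to simplify, the minors $D^{\bm{i}}(b, d)$, $D^{\bm{i}}(b^-, d^-)$, $D^{\bm{i}}(b, d^-)$, $D^{\bm{i}}(b^-, d)$, $D^{\bm{i}}(b^-(j), d^-(j))$ are mapped under $\Theta_{w^{-1}}$ onto $D^{\bm{i}^{\mathrm{rev}}}(M_K^-, M_L^-)$, $D^{\bm{i}^{\mathrm{rev}}}(M_K, M_L)$, $D^{\bm{i}^{\mathrm{rev}}}(M_K, M_L^-)$, $D^{\bm{i}^{\mathrm{rev}}}(M_K^-, M_L)$, $D^{\bm{i}^{\mathrm{rev}}}(M_K^-(j), M_L^-(j))$, respectively---exactly the minors appearing in the $T$-system for $\bm{i}^{\mathrm{rev}}$ at $(M_K, M_L)$, with the \textquotedblleft top\textquotedblright\ and \textquotedblleft${-}$\textquotedblright\ roles swapped.

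I would then apply $\Theta_{w^{-1}}$ to equation \eqref{T-sys1} for $(\bm{i}, (b, d))$. Because $\Theta_{w^{-1}}$ is an anti-algebra homomorphism, products reverse, and the result has the structure of equation \eqref{T-sys2} for $(\bm{i}^{\mathrm{rev}}, (M_K, M_L))$. It remains to check that the $q$-exponents match. By $W$-invariance of the bilinear form and the identities above, the exponent $\tilde{A}$ for $\bm{i}^{\mathrm{rev}}$ rewrites as $(\mu^{\bm{i}}(d^-, i),\, \mu^{\bm{i}}(d, i) - \mu^{\bm{i}}(b, i))$, and one finds
\[
A - \tilde{A} = (\mu^{\bm{i}}(b, i),\, \mu^{\bm{i}}(b^-, i)) - (\mu^{\bm{i}}(d, i),\, \mu^{\bm{i}}(d^-, i)).
\]
Writing $u_{k^-}^{-1}u_k\varpi_i = \varpi_i - v\alpha_i$ with $v = s_{i_{k^- + 1}}\cdots s_{i_{k - 1}}$ a product of $s_j$'s for $j \neq i$, and using that $v$ fixes $\varpi_i$, one obtains $(\mu^{\bm{i}}(k, i),\, \mu^{\bm{i}}(k^-, i)) = (\varpi_i, \varpi_i) - (\varpi_i, \alpha_i)$, independently of $k$. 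Hence $A = \tilde{A}$, and an analogous argument yields $B = \tilde{B'}$.

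The main obstacle is the matching of the product term: the anti-homomorphism reverses the order of $\dprod_{j \in I \setminus \{i\}}$ in \eqref{T-sys1}. I would invoke the $q$-commutation relations among the frozen unipotent minors $D^{\bm{i}^{\mathrm{rev}}}(M_K^-(j), M_L^-(j))$ for varying $j$ (a standard feature of the quantum cluster algebra structure on $\Uq^-(w^{-1})$), so that reversing the product order multiplies by a global $q$-factor. It remains to verify that this $q$-factor cancels the discrepancy $C - \tilde{C}$; a direct expansion of $C$ and $\tilde{C}$ from their defining formulas, combined once more with $W$-invariance of $(\cdot, \cdot)$ and the elementary identity $s_i\varpi_j = \varpi_j - \delta_{ij}\alpha_j$, reduces this to the same kind of weight-computation as in the preceding paragraph.
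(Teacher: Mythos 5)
Your overall strategy is the same as the paper's: apply $\Theta_{w^{-1}}$ to \eqref{T-sys1}, use Theorem \ref{t:mainthm2} together with $w^{-1}\mu^{\bm{i}}(k,j)=\mu^{\bm{i}^{\mathrm{rev}}}(\ell-k,j)$ to identify the images of the minors, and then match the $q$-exponents. Your identification of the minors (with the roles of the ``undecorated'' and ``${}^-$'' indices swapped), and your verification that $A$ and $B$ coincide with the corresponding exponents for $(\bm{i}^{\mathrm{rev}},(\ell-d+1,\ell-b+1))$ via the $W$-invariance of $(\;,\;)$ and the $k$-independence of $(\mu^{\bm{i}}(k,i),\mu^{\bm{i}}(k^-,i))$, are correct and agree with the paper's computation.

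The weak point is your treatment of the product term $\dprod_{j\in I\setminus\{i\}}D(b^-(j),d^-(j))^{-a_{ji}}$. You propose to undo the order reversal caused by the anti-homomorphism by $q$-commuting the frozen minors back into the original order on $I$, and you explicitly defer the verification that the resulting global $q$-power equals the discrepancy $C-\tilde{C}$. That verification is not a routine weight computation of the kind you carried out for $A$: it requires the precise $q$-commutation exponents between the minors $D^{\bm{i}^{\mathrm{rev}}}(d_{\mathrm{r}}^-(j),b_{\mathrm{r}}^-(j))$ for distinct $j$, which you neither state nor prove, so as written there is a genuine gap. The paper avoids this entirely by exploiting that Proposition \ref{p:T-sys} holds for an \emph{arbitrary} total order on $I$: the image of the product is simply $q^{C}\dprod_{j\in I^{\mathrm{rev}}\setminus\{i\}}D^{\bm{i}^{\mathrm{rev}}}(d_{\mathrm{r}}^-(j),b_{\mathrm{r}}^-(j))^{-a_{ji}}$, a product taken with respect to the reversed order on $I$, and one only has to check (by swapping the summation roles of $j$ and $k$) that $C$ equals the exponent attached to the $T$-system for $\bm{i}^{\mathrm{rev}}$ with that reversed order. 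No commutation of the frozen factors is needed. I recommend you either adopt this observation or supply the commutation relations and complete the cancellation argument.
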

\begin{proof}
Fix $\bm{i}=(i_1,\dots, i_{\ell})\in I(w)$. Let $b, d$ the integers such that $1\leq b<d\leq \ell$ and $i_b=i_d=i$. For $a=1, \dots, \ell$, set $a_{\mathrm{r}}:=\ell-a+1$. For simplicity of notation, we write $a_{\mathrm{r}}^-:=(a_{\mathrm{r}})^-$ and $a_{\mathrm{r}}^-(j):=(a_{\mathrm{r}})^-(j)$ where the right-hand sides are considered with respect to $\bm{i}^\mathrm{rev}$ for $a=1, \dots, \ell$. Note that $w^{-1}\mu^{\bm{i}}(a', j)=\mu^{\bm{i}^{\mathrm{rev}}}(\ell -a', j)$ for $a'=0,\dots, \ell$ and $j\in I$. In particular, for $a=b, d$, we have $w^{-1}\mu^{\bm{i}}(a, i)=\mu^{\bm{i}^\mathrm{rev}}(a_{\mathrm{r}}^-, i), w^{-1}\mu^{\bm{i}}(a^-, i)=\mu^{\bm{i}^\mathrm{rev}}(a_{\mathrm{r}}, i)$ and $w^{-1}\mu^{\bm{i}}(a, j)=\mu^{\bm{i}^\mathrm{rev}}(a_{\mathrm{r}}, j)$ if $j\neq i$. Hence, by applying $\Theta_{w^{-1}}$ to both sides of \eqref{T-sys1} and using Theorem \ref{t:mainthm2}, we obtain
\begin{align}\label{T-sysrev}
&q^{A}D^{\bm{i}^{\mathrm{rev}}}(d_{\mathrm{r}}^-, b_{\mathrm{r}}^-)D^{\bm{i}^{\mathrm{rev}}}(d_{\mathrm{r}}, b_{\mathrm{r}})\\
&=q_i^{-1}q^{B}D^{\bm{i}^{\mathrm{rev}}}(d_{\mathrm{r}}^-, b_{\mathrm{r}})D^{\bm{i}^{\mathrm{rev}}}(d_{\mathrm{r}}, b_{\mathrm{r}}^-)+q^C\dprod_{j\in I^{\mathrm{rev}}\setminus\{i\}}D^{\bm{i}^{\mathrm{rev}}}(d_{\mathrm{r}}^-(j), b_{\mathrm{r}}^-(j))^{-a_{ji}},\notag
\end{align}
here $a_{ji}, A, B, C$ are the same as in Proposition \ref{p:T-sys} and $I^{\mathrm{rev}}$ denotes the index set $I$ with the reverse total order. By the way, 
\begin{align*}
&\left(\mu^{\bm{i}}(b, i), \mu^{\bm{i}}(b^-, i)\right)=\left(s_i\varpi_i, \varpi_i\right)=\left(\mu^{\bm{i}}(d, i), \mu^{\bm{i}}(d^-, i)\right),\\
&\left(\mu^{\bm{i}}(a', j), \mu^{\bm{i}}(a', k)\right)=\left(\varpi_j, \varpi_k\right)\;\text{for all}\ a'=0,\dots, \ell\;\text{and}\;j, k\in I.
\end{align*}
Therefore we have
\begin{align*}
A=\left(\mu^{\bm{i}}(b, i), \mu^{\bm{i}}(b^-, i)-\mu^{\bm{i}}(d^-, i)\right)&=\left(\mu^{\bm{i}}(d^-, i), \mu^{\bm{i}}(d, i)-\mu^{\bm{i}}(b, i)\right)\\
&=\left(\mu^{\bm{i}^{\mathrm{rev}}}(d_{\mathrm{r}}, i), \mu^{\bm{i}^{\mathrm{rev}}}(d_{\mathrm{r}}^-, i)-\mu^{\bm{i}^{\mathrm{rev}}}(b_{\mathrm{r}}^-, i)\right),\\
B=\left(\mu^{\bm{i}}(b^-, i), \mu^{\bm{i}}(b, i)-\mu^{\bm{i}}(d^-, i)\right)&=\left(\mu^{\bm{i}}(d^-, i), \mu^{\bm{i}}(d, i)-\mu^{\bm{i}}(b^-, i)\right)\\
&=\left(\mu^{\bm{i}^{\mathrm{rev}}}(d_{\mathrm{r}}, i), \mu^{\bm{i}^{\mathrm{rev}}}(d_{\mathrm{r}}^-, i)-\mu^{\bm{i}}(b_{\mathrm{r}}, i)\right),
\end{align*}
\begin{align*}
C=&\sum_{j\in I\setminus\{i\}}\left(\begin{array}{c}
-a_{ji}\\
2
\end{array}\right)\left(\mu^{\bm{i}}(b, j),\mu^{\bm{i}}(b, j)-\mu^{\bm{i}}(d, j)\right)\\
&+\sum_{j, k\in I\setminus\{i\};k<j}a_{ji}a_{ki}\left(\mu^{\bm{i}}(b, j), \mu^{\bm{i}}(b, k)-\mu^{\bm{i}}(d, k)\right)\\
=&\sum_{j\in I^{\mathrm{rev}}\setminus\{i\}}\left(\begin{array}{c}
-a_{ji}\\
2
\end{array}\right)\left(\mu^{\bm{i}^{\mathrm{rev}}}(d_{\mathrm{r}}, j),\mu^{\bm{i}^{\mathrm{rev}}}(d_{\mathrm{r}}, j)-\mu^{\bm{i}^{\mathrm{rev}}}(b_{\mathrm{r}}, j)\right)\\
&+\sum_{j, k\in I^{\mathrm{rev}}\setminus\{i\};j<k}a_{ki}a_{ji}\left(\mu^{\bm{i}^{\mathrm{rev}}}(d_{\mathrm{r}}, k), \mu^{\bm{i}^{\mathrm{rev}}}(d_{\mathrm{r}}, j)-\mu^{\bm{i}^{\mathrm{rev}}}(b_{\mathrm{r}}, j)\right).
\end{align*}
Therefore the equality \eqref{T-sysrev} belongs to the quantum $T$-system in $\Uq(w^{-1})$.
 \end{proof}
\subsection{Quantum twist maps and cofinite quantum nilpotent subalgebras}\label{Quantum twist maps and cofinite quantum nilpotent subalgebras}
\begin{prop}[{{{\cite[Proposition 3.4.7, Corollary 3.4.8]{MR1265471}, \cite[Theorem 1.2]{MR1409422}
and \cite[Theorem 4.23]{MR2914878}}}}]
\label{p:Saito_refl} Let $i\in I$ and $b\in \mathscr{B}(\infty)$ with $\varepsilon_{i}(b)=0$.
Then we have
\[
T_{i}^{-1}\left(\Gup(b)\right)=\Gup\left(\sigma_{i}^{\ast}b\right),
\]
where $\sigma_{i}^{\ast}\colon\{b\in \mathscr{B}(\infty)\mid\varepsilon_{i}(b)=0\}\to\{b\in \mathscr{B}(\infty)\mid\varepsilon_{i}^{\ast}(b)=0\}$
is the bijection given by $b\mapsto\tilde{f}_{i}^{\varphi_{i}^{\ast}(b)}(\tilde{e}_{i}^{\ast})^{\varepsilon_{i}^{\ast}(b)}b$.
\end{prop}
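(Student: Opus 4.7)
My plan is to apply the uniqueness characterization of dual canonical basis elements in Remark \ref{r:characterize} after choosing a reduced word that begins with $i$. Concretely, pick $w\in W$ sufficiently long with $\Gup(b)\in\Uq^-(w)$ and a reduced word $\bm{i}=(i_1,\dots,i_\ell)\in I(w)$ such that $i_1=i$, and write $b=b(\bm{c},\bm{i})$. Under this choice, the standard compatibility of the PBW parametrization with the Kashiwara operator $\tilde{e}_i$ identifies $\varepsilon_i(b)$ with $c_1$, so the hypothesis $\varepsilon_i(b)=0$ becomes $c_1=0$. Set $\bm{i}':=(i_2,\dots,i_\ell)\in I(s_iw)$ and $\bm{c}':=(c_2,\dots,c_\ell)$. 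Unwinding the definition of $F^{\mathrm{low}}$ when $c_1=0$ gives $F^{\mathrm{low}}(\bm{c},\bm{i})=T_i(F^{\mathrm{low}}(\bm{c}',\bm{i}'))$, and since the norm product in \eqref{norm} is unchanged upon deleting the $k=1$ factor, one obtains $T_i^{-1}(F^{\mathrm{up}}(\bm{c},\bm{i}))=F^{\mathrm{up}}(\bm{c}',\bm{i}')$.

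Applying $T_i^{-1}$ term by term to the expansion from (DCB2) then yields the PBW expansion of $T_i^{-1}(\Gup(b))$ in $\Uq^-(s_iw)$. The point is that the left lexicographic order preserves the condition $c_1''=0$ once $c_1=0$, so every surviving term is a PBW element attached to $\bm{i}'$ and the coefficients remain in $q\mathbb{Z}[q]$. This gives
\[
T_i^{-1}(\Gup(b))=F^{\mathrm{up}}(\bm{c}',\bm{i}')+\sum_{\bm{d}\in\mathbb{Z}_{\geq 0}^{\ell-1}} q\mathbb{Z}[q]\, F^{\mathrm{up}}(\bm{d},\bm{i}'),
\]
which is exactly condition (DCB2)$'$ with leading PBW parameter $\bm{c}'$, the first half of what Remark \ref{r:characterize} requires.

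The remaining and most delicate step is the $\sigma$-invariance of $T_i^{-1}(\Gup(b))$. Since $\varepsilon_i(b)=0$, Proposition \ref{p:compatibility}(1) places $\Gup(b)$ in $\Ker e'_i=\Uq^-\cap T_i\Uq^-$, so $T_i^{-1}(\Gup(b))$ does lie in $\Uq^-$. To establish $\sigma(T_i^{-1}\Gup(b))=T_i^{-1}\Gup(b)$ I would use the closed formula of Proposition \ref{p:dualbar} writing $\sigma$ as a weight-dependent scalar times $\overline{\cdot}\circ *$, together with the intertwining $*\circ T_i=T_i^{-1}\circ *$ (checked directly on generators), reducing the claim to a comparison of $\overline{T_i^{-1}(x)}$ with $T_i(\overline{x})$ restricted to $\Ker e'_i$. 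On weight vectors these differ by a precise $q$-power that cancels the scalar prefactor appearing in Proposition \ref{p:dualbar}, and the known $\sigma$-invariance of $\Gup(b)$ then transports to $T_i^{-1}\Gup(b)$. Remark \ref{r:characterize} applied to $\bm{i}'\in I(s_iw)$ then gives $T_i^{-1}(\Gup(b))=\Gup(b(\bm{c}',\bm{i}'))$, and identifying $b(\bm{c}',\bm{i}')$ with $\sigma_i^*b=\tilde{f}_i^{\varphi_i^*(b)}(\tilde{e}_i^*)^{\varepsilon_i^*(b)}b$ is a purely combinatorial verification tracing $\tilde{e}_i^*$ and $\tilde{f}_i$ through the Kashiwara–Saito recursion on PBW parameters. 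I expect the $\sigma$-invariance step to be the main technical hurdle, since the bar involution fails to commute with $T_i$ on all of $\Uq^-$ and one must carefully keep track of $q$-power corrections while restricting to $\Ker e'_i$.
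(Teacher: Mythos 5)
This proposition is not proved in the paper at all: it is quoted from Saito, Lusztig and Kimura, and the paper's later arguments (notably Theorem \ref{t:mainthm3}) rely on it as an external input. So the relevant question is whether your argument is a valid independent proof, and it is not: the very first step already fails in the stated generality. You assume that for any $b\in\mathscr{B}(\infty)$ with $\varepsilon_i(b)=0$ one can find $w$ and $\bm{i}\in I(w)$ with $i_1=i$ such that $\Gup(b)\in\Uq^-(w)$. Outside finite type this is false. For a rank-two Kac--Moody algebra with $a_{ij}=a_{ji}=-2$ and $b$ the element with $\Glow(b)=f_j$ ($j\neq i$), one has $\varepsilon_i(b)=0$, but the roots inverted by any $w$ whose reduced words begin with $i$ are $\alpha_i,(2,1),(3,2),\dots$ in the basis $(\alpha_i,\alpha_j)$, and $\alpha_j$ is not a nonnegative combination of these; hence $f_j\notin\Uq^-(w)$ for any admissible $w$. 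Similarly, in affine type, dual canonical basis elements of imaginary weight lie in no $\Uq^-(w)$ whatsoever. Since Theorem \ref{t:mainthm3} applies the proposition to elements of the cofinite subalgebra $\Uq^-\cap T_w(\Uq^-)$, which are generically of this excluded kind, your proof would not even cover the cases the paper actually needs.

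Beyond that, the two steps you flag as remaining are precisely where the content of the theorem lives, and neither is carried out. For the $\sigma$-invariance you reduce, via $\ast\circ T_i=T_i^{-1}\circ\ast$ (which is correct), to the claim that $\overline{T_i^{-1}(y)}$ and $T_i^{-1}(\overline{y})$ differ by an explicit power of $q$ on weight vectors of $\Ker e_i'$; this is asserted, not proved, and the interaction of the bar involution with $T_i$ is exactly the hard point (on all of $\Uq^-$ the discrepancy is not a scalar, and establishing the scalar relation on $\Ker e_i'$ is essentially equivalent to Lusztig's theorem). Likewise, the ``purely combinatorial verification'' that $b(\bm{c}',\bm{i}')=\sigma_i^{\ast}b(\bm{c},\bm{i})$ is Saito's crystal reflection theorem itself; moreover, the characterization of Proposition \ref{p:dualcanonical} and Remark \ref{r:characterize} that you invoke is, in the cited literature, derived \emph{from} Proposition \ref{p:Saito_refl}, so even where your reduction applies the argument is in danger of circularity. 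What is sound in your sketch is the elementary part: when $c_1=0$ one indeed has $T_i^{-1}(F^{\up}(\bm{c},\bm{i}))=F^{\up}(\bm{c}',\bm{i}')$, and the left lexicographic order confines the expansion (DCB2) to parameters with first entry $0$. But a correct proof must go through the lower canonical basis statement of Lusztig/Saito and dualize using Proposition \ref{p:compatibility}(2), as in the cited Theorem 4.23 of \cite{MR2914878}, rather than through the quantum nilpotent subalgebras.
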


\begin{prop}[{\cite[Theorem 3.3]{kimura2015remarks}}]
\label{p:cofinite} Let $w\in W$. Then $\Uq^{-}\cap T_{w}(\Uq^{-})\cap\mathbf{B}^{\mathrm{up}}$
is a basis of $\Uq^{-}\cap T_{w}(\Uq^{-})$. \end{prop}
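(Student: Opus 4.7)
The plan is to argue by induction on $\ell(w)$, reducing to the case of a single simple reflection via Proposition \ref{p:intersection} and then iterating with Proposition \ref{p:Saito_refl}. The base case $w=e$ is trivial.

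For $w=s_i$, Proposition \ref{p:compatibility}(1) identifies $\Uq^{-}\cap T_{i}(\Uq^{-})$ with $\Ker e_i'$. The foundational compatibility (due to Kashiwara) between $\mathbf{B}^{\mathrm{up}}$ and the decomposition $\Uq^{-}=\bigoplus_{n\geq 0}(\Ker e_i')\cdot f_i^{(n)}$, encoded by the crystal structure $(\tilde{e}_i,\tilde{f}_i,\varepsilon_i)$, shows that if $x=\sum_{b\in\mathscr{B}(\infty)}c_b\Gup(b)\in\Ker e_i'$, then $c_b=0$ unless $\varepsilon_i(b)=0$. Hence $\{\Gup(b)\mid\varepsilon_i(b)=0\}$ is a basis of $\Uq^{-}\cap T_i(\Uq^{-})$.

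For the inductive step, fix $\bm{i}=(i_1,\dots,i_\ell)\in I(w)$, set $i:=i_1$ and $w':=s_{i_2}\cdots s_{i_\ell}$. Applying Proposition \ref{p:intersection} to $w$ and then again to $w'$ gives
\[
\Uq^{-}\cap T_{w}(\Uq^{-})=\bigl(\Uq^{-}\cap T_i(\Uq^{-})\bigr)\cap T_i\bigl(\Uq^{-}\cap T_{w'}(\Uq^{-})\bigr).
\]
Take $x\in\Uq^{-}\cap T_{w}(\Uq^{-})$ and expand $x=\sum_{b\in\mathscr{B}(\infty)}c_b\Gup(b)$. The base case forces $c_b=0$ unless $\varepsilon_i(b)=0$, and for such $b$ Proposition \ref{p:Saito_refl} gives $T_i^{-1}\Gup(b)=\Gup(\sigma_i^{\ast}b)$. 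Hence
\[
T_i^{-1}(x)=\sum_{b:\ \varepsilon_i(b)=0}c_b\Gup(\sigma_i^{\ast}b)\in\Uq^{-}\cap T_{w'}(\Uq^{-}),
\]
and the inductive hypothesis applied to $w'$ forces $c_b=0$ unless $\Gup(\sigma_i^{\ast}b)\in\mathbf{B}^{\mathrm{up}}\cap\Uq^{-}\cap T_{w'}(\Uq^{-})$. Conversely, for any $b$ satisfying both conditions, reversing this computation with $T_i$ shows that $\Gup(b)\in\Uq^{-}\cap T_w(\Uq^{-})$. Linear independence is inherited from $\mathbf{B}^{\mathrm{up}}$, closing the induction.

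The technical heart of the argument is the base case: the compatibility of $\mathbf{B}^{\mathrm{up}}$ with $\Ker e_i'$ via the Kashiwara crystal operators. Granted this, Proposition \ref{p:Saito_refl} is tailor-made to transport the dual canonical basis across a single Lusztig symmetry, and Proposition \ref{p:intersection} is exactly what guarantees that this one-step transport iterates correctly along a reduced expression of $w$.
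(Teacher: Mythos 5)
The paper itself offers no proof of this proposition: it is imported verbatim from \cite[Theorem 3.3]{kimura2015remarks}. Your argument is correct and is essentially the proof given in that reference: the base case rests on Kashiwara's compatibility of $\mathbf{B}^{\up}$ with $\Ker e_{i}'=\Uq^{-}\cap T_{i}(\Uq^{-})$, and the inductive step is exactly the interplay of Proposition \ref{p:intersection} (to peel off one reflection) with Proposition \ref{p:Saito_refl} (to transport the basis across $T_{i}^{-1}$), together with the observation that a basis of a subspace extracted from a linearly independent set determines the support of any element of that subspace. One minor slip: the relevant decomposition is $\Uq^{-}=\bigoplus_{n\geq0}f_{i}^{(n)}\Ker e_{i}'$ (with $f_{i}^{(n)}$ on the \emph{left}; the right-handed version involves $\Ker{}_{i}e'$ instead), but this does not affect your argument, since all you actually use is that $\{\Gup(b)\mid\varepsilon_{i}(b)=0\}$ is a basis of $(f_{i}\Uq^{-})^{\perp}=\Ker e_{i}'$.
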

\begin{thm}
\label{t:mainthm3} For $\Gup(b)\in\Uq^{-}\cap T_{w}(\Uq^{-})\cap\mathbf{B}^{\mathrm{up}}$,
we have
\[
\Theta_{w^{-1}}(\Gup(b))=(-1)^{\height \beta_w}q^{\frac{1}{2}(\beta_w,\beta_w)-(\rho,\beta_w)}\Gup(\ast\sigma_{i_{\ell}}^{*}\cdots\sigma_{i_{1}}^{*}b)^{\vee}t_{\beta_w}.
\]
for $(i_{1},\dots,i_{\ell})\in I(w)$. Here we write $\beta_w:=-w^{-1}\wt b$ and set $t_{\alpha}:=\prod_{i\in I}t_{i}^{m_{i}}$
for $\alpha=\sum_{i\in I}m_{i}\alpha_{i}\in Q$. See Proposition \ref{p:dualbar} for the definition of $\height$. \end{thm}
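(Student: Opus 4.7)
The plan is to reduce the computation to an iterated application of the Saito reflection (Proposition \ref{p:Saito_refl}). The key preliminary observation is that, iterating Lemma \ref{l:T-antipode} along a reduced expression $(i_1,\ldots,i_\ell)\in I(w)$, one can commute $S\circ\vee$ past each $T_{i_k}$ to get
\[
\Theta_{w^{-1}} = T_{w^{-1}}\circ(S\circ\vee) = (T_{i_\ell}\cdots T_{i_1})\circ(S\circ\vee) = (S\circ\vee)\circ(T_{i_\ell}^{-1}\cdots T_{i_1}^{-1}) = (S\circ\vee)\circ T_w^{-1}.
\]
Here $T_w^{-1}$ denotes the inverse of $T_w = T_{i_1}\cdots T_{i_\ell}$, which in general differs from $T_{w^{-1}}$. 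This rewriting is what allows Proposition \ref{p:Saito_refl} to be applied directly.

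Next I compute $T_w^{-1}(\Gup(b))$. By Proposition \ref{p:intersection} combined with Proposition \ref{p:compatibility}(1), we have $\varepsilon_{i_1}(b)=0$, so Proposition \ref{p:Saito_refl} yields $T_{i_1}^{-1}(\Gup(b)) = \Gup(\sigma_{i_1}^{*}b)$. Applying $T_{i_1}^{-1}$ to the chain of intersections in Proposition \ref{p:intersection}, the resulting element lies in $\Uq^{-}\cap T_{i_2}(\Uq^{-})\cap\cdots$, hence $\varepsilon_{i_2}(\sigma_{i_1}^{*}b)=0$, and we may apply $T_{i_2}^{-1}$. Inductively,
\[
T_w^{-1}(\Gup(b)) = \Gup(b'),\qquad b':=\sigma_{i_\ell}^{*}\cdots\sigma_{i_1}^{*}b,
\]
with $\wt b' = w^{-1}\wt b = -\beta_w$.

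It remains to compute $(S\circ\vee)(\Gup(b'))$, for which I will establish the explicit formula
\[
(S\circ\vee)(x) = (-1)^{\height\wt x}\,q^{(\wt x,\wt x)/2 + (\rho,\wt x)}\,\ast(x)^{\vee}\,t_{-\wt x}
\]
for any homogeneous $x\in\Uq^{-}$. This is verified on the generators $f_i$ (both sides equal $-e_it_i$, using $(\rho,\alpha_i)=(\alpha_i,\alpha_i)/2$) and extended by induction using that $S\circ\vee$ is an anti-algebra involution: commuting the Cartan factor past $\ast(\cdot)^{\vee}$ produces exactly the cross term $q^{(\wt x_1,\wt x_2)}$ that completes the quadratic form $(\wt(x_1x_2),\wt(x_1x_2))/2$. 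Substituting $x=\Gup(b')$, using $\ast(\Gup(b'))=\Gup((b')^{*})$, and plugging in $\wt b'=-\beta_w$ yields the claimed formula, since $(-1)^{-\height\beta_w}=(-1)^{\height\beta_w}$ and $(-\beta_w,-\beta_w)/2+(\rho,-\beta_w)=(\beta_w,\beta_w)/2-(\rho,\beta_w)$. The main technical obstacle is the $q$-factor bookkeeping in this explicit formula for $S\circ\vee$, but once the base case is in hand the anti-algebra property extends it cleanly.
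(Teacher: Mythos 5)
Your proposal is correct and follows essentially the same route as the paper: rewrite $\Theta_{w^{-1}}=(S\circ\vee)\circ(T_{w})^{-1}$ via Lemma \ref{l:T-antipode}, compute $(T_{w})^{-1}\Gup(b)$ by iterated Saito reflections (Proposition \ref{p:Saito_refl}), and finish with the explicit formula for $S\circ\vee$ on homogeneous elements (the paper states the equivalent formula for $S$ on $\Uq^{+}$). You simply supply more detail on the intermediate vanishing conditions $\varepsilon_{i_k}=0$ and on the verification of the $q$-power bookkeeping, both of which check out.
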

\begin{proof}
Recall that we have $\Theta_{w^{-1}}=S\circ\vee\circ(T_{w})^{-1}$.
By Proposition \ref{p:Saito_refl}, we have $(T_{w})^{-1}\Gup(b)=\Gup(\sigma_{i_{\ell}}^{*}\cdots\sigma_{i_{1}}^{*}b)$
for $(i_{1},\dots,i_{\ell})\in I(w)$ (see also \cite[Proposition 3.4]{kimura2015remarks}).
Moreover, for a homogeneous element $x\in\Uq^{+}$, we have $S(x)=(-1)^{\height\wt x}q^{\frac{1}{2}(\wt x,\wt x)-(\rho,\wt x)}\ast(x)t_{\wt x}$.
Therefore we obtain the assertion.
 \end{proof}

\section{Finite type case}\label{Finite type case}

In this section, we assume that $\mathfrak{g}$ is a finite dimensional
complex simple Lie algebra. Let $w_{0}\in W$ be the longest element
of the corresponding Weyl group $W$. It is well-known that there
is a unique Dynkin diagram automorphism $\theta$ with $-w_{0}\left(\alpha_{i}\right)=\alpha_{\theta\left(i\right)}$
for all $i\in I$. For a reduced word $\bm{i}=\left(i_{1},\cdots,i_{N}\right)\in I\left(w_{0}\right)$,
it is known that $\left(i_{2},\cdots,i_{N},\theta\left(i_{1}\right)\right)$
is also a reduced word of $w_0$.
\begin{defn}
We define a $\mathbb{Q}\left(q\right)$-algebra automorphism on $\Uq\left(\mathfrak{g}\right)$
defined by
\begin{align*}
\theta\left(e_{i}\right) & =e_{\theta\left(i\right)} & \theta\left(f_{i}\right) & =f_{\theta\left(i\right)} & \theta\left(q^{h}\right) & =q^{-w_{0}\left(h\right)}.
\end{align*}
\end{defn}
\begin{prop}[{{{\cite[Proposition 8.20]{MR1359532}, \cite[Proposition 3.2]{nakajima:CBMS1}}}}]
\label{p:simple}
If $w\left(\alpha_{i}\right)=\alpha_{j}$ for some $i, j\in I$ and $w\in W$, we have
\[
T_{w}\left(x_{i}\right)=x_{j}
\]
where $x=e, f$.
\end{prop}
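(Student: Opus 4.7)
My plan is to first reduce the claim to the determination of two scalars, and then to pin them down by a calculation in the integrable module $V(\varpi_{j})$. Since $T_{w}$ preserves weights and $w(\alpha_{i})=\alpha_{j}>0$, extending any reduced word of $w$ by $i$ on the right (reduced because $\ell(ws_{i})=\ell(w)+1$) exhibits $T_{w}(e_{i})$ as a PBW-type root vector in $\Uq^{+}$, so weight considerations force $T_{w}(e_{i})=c\,e_{j}$ and, by the mirror argument, $T_{w}(f_{i})=c'\,f_{j}$ for some $c,c'\in\mathbb{Q}(q)$. The isometry condition $w(\alpha_{i})=\alpha_{j}$ also gives $w(h_{i})=h_{j}$ and $q_{i}=q_{j}$, whence $T_{w}(t_{i})=t_{j}$; applying $T_{w}$ to the relation $e_{i}f_{i}-f_{i}e_{i}=(t_{i}-t_{i}^{-1})/(q_{i}-q_{i}^{-1})$ forces $cc'=1$.

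To show $c'=1$, and hence $c=1$, I would examine $V(\varpi_{j})$ together with the extremal vector $v_{w^{-1}\varpi_{j}}=T_{w}^{-1}(v_{\varpi_{j}})$. From $w(h_{i})=h_{j}$ we get $\langle h_{i},w^{-1}\varpi_{j}\rangle=\langle h_{j},\varpi_{j}\rangle=1$, and $e_{i}\cdot v_{w^{-1}\varpi_{j}}=0$ because the candidate weight $w^{-1}\varpi_{j}+\alpha_{i}=w^{-1}(\varpi_{j}+\alpha_{j})$ cannot appear in $V(\varpi_{j})$ (the weight $\varpi_{j}+\alpha_{j}$ exceeds the highest weight, and the weight set is $W$-stable). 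Thus $v_{w^{-1}\varpi_{j}}$ is a $\Uq(\mathfrak{sl}_{2}^{(i)})$-primitive vector of weight $1$, exactly paralleling $v_{\varpi_{j}}$ as a $\Uq(\mathfrak{sl}_{2}^{(j)})$-primitive vector of weight $1$.

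Next I would invoke the braid identity $T_{w}T_{s_{i}}=T_{s_{j}}T_{w}$: in $W$ one has $ws_{i}=s_{w\alpha_{i}}w=s_{j}w$, both sides reduced of length $\ell(w)+1$ (using $w\alpha_{i}>0$ and $w^{-1}\alpha_{j}=\alpha_{i}>0$), so the well-definedness of $T_{u}$ for $u\in W$ together with length-additivity yields the identity, hence also $T_{w}T_{s_{i}}^{-1}=T_{s_{j}}^{-1}T_{w}$ on $V(\varpi_{j})$. Applying this to $v_{w^{-1}\varpi_{j}}=T_{w}^{-1}(v_{\varpi_{j}})$ produces $T_{w}\bigl(T_{s_{i}}^{-1}(v_{w^{-1}\varpi_{j}})\bigr)=T_{s_{j}}^{-1}(v_{\varpi_{j}})$. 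By Lusztig's explicit formula for $T_{s_{i}}^{-1}$ on a primitive $\Uq(\mathfrak{sl}_{2}^{(i)})$-vector of weight $n$, both $T_{s_{i}}^{-1}(v_{w^{-1}\varpi_{j}})$ and $T_{s_{j}}^{-1}(v_{\varpi_{j}})$ are the same scalar $\gamma\in\mathbb{Q}(q)^{\times}$ --- depending only on $n=1$ and on $q_{i}=q_{j}$ --- times $f_{i}v_{w^{-1}\varpi_{j}}$ and $f_{j}v_{\varpi_{j}}$ respectively. Cancelling $\gamma$ gives $T_{w}(f_{i}v_{w^{-1}\varpi_{j}})=f_{j}v_{\varpi_{j}}$; comparing with the algebra-side computation $T_{w}(f_{i}v_{w^{-1}\varpi_{j}})=T_{w}(f_{i})\cdot T_{w}(v_{w^{-1}\varpi_{j}})=c'f_{j}v_{\varpi_{j}}$ forces $c'=1$.

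The main obstacle will be the equality of these two Lusztig scalars on primitive weight-$1$ vectors. The essential point is that the $T_{i}^{-1}$-action on any integrable module is local to each $\Uq(\mathfrak{sl}_{2}^{(i)})$-string, and the scalar on a primitive weight-$n$ vector depends only on the pair $(n,q_{i})$; the identification $q_{i}=q_{j}$ forced by $w$ being an isometry is precisely what makes the two scalars coincide. All the other steps --- the weight-space reduction, the braid identity, and the weight bookkeeping --- are routine given this structural observation.
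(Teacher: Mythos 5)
Your argument is correct. Note first that the paper itself gives no proof of this proposition --- it is quoted from Jantzen and Nakajima --- so there is no in-paper argument to match; what you have written is a legitimate self-contained derivation, and it is close in spirit to the standard ones. The skeleton is sound: $\ell(ws_i)=\ell(w)+1$ puts $T_w(f_i)$ in $\Uq^-$ and $T_w(e_i)$ in $\Uq^+$, the one-dimensionality of the weight spaces $(\Uq^{\mp})_{\mp\alpha_j}$ gives $T_w(f_i)=c'f_j$ and $T_w(e_i)=ce_j$, the Serre-type commutator relation together with $q_i=q_j$ gives $cc'=1$, and the computation in $V(\varpi_j)$ with the extremal vector $v_{w^{-1}\varpi_j}$, the conjugation identity $T_wT_iT_w^{-1}=T_j$, and the locality of Lusztig's formula for $T_i^{\pm1}$ on a primitive weight-$1$ vector (both strings are $2$-dimensional, so the scalar is literally $-q_i^{-1}=-q_j^{-1}$ on each side) pins down $c'=1$. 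Two small remarks. First, the statement that $T_w(e_i)\in\Uq^+$ when $w\alpha_i>0$ is itself a nontrivial input (Lusztig's Proposition 40.1.2); it is standard, but if you want to lean only on the negative-half PBW theory used in this paper, you can instead deduce the $e$-case from the $f$-case directly: from $e_i=-t_iT_i(f_i)$ and $T_wT_i=T_jT_w$ one gets $T_w(e_i)=-t_j\,T_j(T_w(f_i))=-t_j\,c'(-t_j^{-1}e_j)=c'e_j$, so $c=c'$ and the module computation finishes both cases at once. Second, the cited proofs (e.g.\ Jantzen's) typically proceed by induction on $\ell(w)$ with reductions to rank-two computations; your route trades that induction for the single representation-theoretic computation in $V(\varpi_j)$, which requires the paper's standing assumption that fundamental weights $\varpi_j$ exist in $P$ --- an assumption the paper does make, so this is not a gap here.
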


\begin{prop}
\label{p:thetaast} We have $\theta\circ*=\Theta_{w_0}.$ 
\end{prop}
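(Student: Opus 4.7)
The plan is to verify the identity on the algebra generators, since both sides are manifestly $\mathbb{Q}(q)$-algebra anti-automorphisms of $\Uq$. Indeed $\theta$ is an algebra automorphism and $*$ is an anti-involution, so $\theta\circ *$ is an anti-automorphism; likewise $\Theta_{w_0}=T_{w_0}\circ S\circ \vee$ is the composition of two automorphisms with the antipode $S$, hence an anti-automorphism. Thus I only need to match the two maps on $e_i$, $f_i$, and $q^h$.

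The checks on $q^h$ are immediate: both sides send $q^h$ to $q^{w_0(h)}$, using $S\circ\vee(q^h)=q^h$ and $T_{w_0}(q^h)=q^{w_0(h)}$ on one side, and the definitions of $\theta$ and $*$ on the other. The essential step is computing $T_{w_0}(e_i)$ and $T_{w_0}(f_i)$ explicitly. I would use Proposition~\ref{p:simple} with $w:=w_0 s_i$: since $w(\alpha_i)=w_0(-\alpha_i)=\alpha_{\theta(i)}$, we get $T_{w_0 s_i}(e_i)=e_{\theta(i)}$ and $T_{w_0 s_i}(f_i)=f_{\theta(i)}$. Combining this with $T_{w_0}=T_{w_0 s_i}\circ T_i$ (valid because $\ell(w_0)=\ell(w_0 s_i)+1$) and the explicit formulas $T_i^{-1}(e_i)=-t_i^{-1}f_i$, $T_i^{-1}(f_i)=-e_i t_i$, together with $T_{w_0}(t_i)=q^{d_i w_0(h_i)}=t_{\theta(i)}^{-1}$ (using $w_0(h_i)=-h_{\theta(i)}$ and $d_i=d_{\theta(i)}$), I can extract
\begin{align*}
T_{w_0}(f_i)&=-t_{\theta(i)}^{-1}e_{\theta(i)}, & T_{w_0}(e_i)&=-f_{\theta(i)}t_{\theta(i)}.
\end{align*}

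From here the final assembly is routine. Applying $S\circ\vee$ first gives $(S\circ\vee)(e_i)=-t_i^{-1}f_i$ and $(S\circ\vee)(f_i)=-e_i t_i$; applying $T_{w_0}$ and using the two formulas above, the sign and torus factors cancel and one obtains $\Theta_{w_0}(e_i)=e_{\theta(i)}=(\theta\circ *)(e_i)$ and $\Theta_{w_0}(f_i)=f_{\theta(i)}=(\theta\circ *)(f_i)$. The main (very mild) obstacle is just keeping track that the Dynkin involution $\theta$ identifies $w_0(h_i)$ with $-h_{\theta(i)}$ and preserves the symmetrizing constants $d_i$; once that is noted, everything is a short direct calculation on the three types of generators.
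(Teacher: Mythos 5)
Your proposal is correct and follows essentially the same route as the paper: both reduce to a generator check after using Proposition~\ref{p:simple} (applied to $w_0 s_i$, which sends $\alpha_i$ to $\alpha_{\theta(i)}$) to pin down $T_{w_0}(e_i)=-f_{\theta(i)}t_{\theta(i)}$, $T_{w_0}(f_i)=-t_{\theta(i)}^{-1}e_{\theta(i)}$, and $T_{w_0}(q^h)=q^{w_0(h)}$. The paper's proof is just a terser version of the same computation.
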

\begin{proof}
By Proposition \ref{p:simple}, we have 
\begin{align*}
T_{w_0}\left( e_{i}\right)=-f_{\theta (i)}t_{\theta (i)}, &  & T_{w_0}\left( f_{i}\right)=-t_{\theta (i)}^{-1}e_{\theta (i)}, &  & T_{w_0}\left(q^{h}\right) & =q^{w_0\left( h\right)}.
\end{align*}
Hence the proposition follows from the straightforward check on the generators of $\Uq$.
 \end{proof}
By Proposition \ref{p:thetaast}, we obtain the following corollary.
\begin{cor}\label{c:Lusztig1}
For $\bm{i}=\left(i_{1},\cdots,i_{N}\right)\in I\left(w_{0}\right)$
and $\bm{c}=\left(c_{1},\cdots,c_{N}\right)\in\mathbb{Z}_{\geq0}^{N}$,
we have
\[
\left(\theta\circ*\right)\left(G^{\mathrm{up}}\left(b\left(\bm{c},\bm{i}\right)\right)\right)=G^{\mathrm{up}}\left(b\left(\bm{c}^{\mathrm{rev}},\bm{i}^{\mathrm{rev}}\right)\right).
\]

\end{cor}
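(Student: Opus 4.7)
The plan is to deduce Corollary \ref{c:Lusztig1} as an immediate specialization of Theorem \ref{t:mainthm} to the case $w=w_0$, combined with Proposition \ref{p:thetaast}. First I would record two standard finite-type facts: (a) the longest element $w_0$ is an involution, so $w_0^{-1}=w_0$ and hence $\Theta_{w_0^{-1}}=\Theta_{w_0}$; (b) since $w_0$ sends every positive root to a negative one, $\Uq^-(w_0)=\Uq^-$, so $\mathbf{B}^{\mathrm{up}}\cap\Uq^-(w_0)=\mathbf{B}^{\mathrm{up}}$ and Theorem \ref{t:mainthm} applies without any domain restriction to every dual canonical basis element.

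Applying Theorem \ref{t:mainthm} with $w=w_0$ and $\bm{i}\in I(w_0)$ then gives
\[
\Theta_{w_0}\bigl(G^{\mathrm{up}}(b(\bm{c},\bm{i}))\bigr)=G^{\mathrm{up}}(b(\bm{c}^{\mathrm{rev}},\bm{i}^{\mathrm{rev}})),
\]
noting that $\bm{i}^{\mathrm{rev}}\in I(w_0^{-1})=I(w_0)$ so the right-hand side parametrization is well-defined via Proposition \ref{p:dualcanonical}. Substituting the identification $\Theta_{w_0}=\theta\circ *$ from Proposition \ref{p:thetaast} on the left-hand side yields the claimed equality.

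There is essentially no obstacle: the corollary is the finite-type reincarnation of Theorem \ref{t:mainthm} under the identification of the twist map attached to the longest element with the classical algebra automorphism $\theta\circ *$. Conceptually, this recovers in a uniform way the known symmetry of the dual canonical basis (and of its Lusztig PBW parametrizations) with respect to $\theta\circ *$, now obtained as a special case of the general quantum twist map formalism.
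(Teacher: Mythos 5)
Your proposal is correct and follows exactly the paper's route: the corollary is obtained by specializing Theorem \ref{t:mainthm} to $w=w_0$ (using $w_0^{-1}=w_0$ and $\Uq^-(w_0)=\Uq^-$) and then substituting $\Theta_{w_0}=\theta\circ\ast$ from Proposition \ref{p:thetaast}. You have merely made explicit the standard finite-type facts that the paper leaves implicit.
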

Moreover by Lemma \ref{l:form-preserve} we obtain the following corollary.
\begin{cor}\label{c:Lusztig2}
Let $\bm{i}=\left(i_{1},\cdots,i_{N}\right)\in I\left(w_{0}\right)$
and $\bm{c}=\left(c_{1},\cdots,c_{N}\right)\in\mathbb{Z}_{\geq0}^{N}$.

\textup{(1)} We have
\[
\left(\theta\circ*\right)\left(G^{\mathrm{low}}\left(b\left(\bm{c},\bm{i}\right)\right)\right)=G^{\mathrm{low}}\left(b\left(\bm{c}^{\mathrm{rev}},\bm{i}^{\mathrm{rev}}\right)\right).
\]

\textup{\textup{(2)}} Write $G^{\mathrm{low}}\left(b\left(\bm{c},\bm{i}\right)\right)=\sum_{\bm{c}'}\left(G^{\mathrm{low}}\left(b\left(\bm{c},\bm{i}\right)\right)\colon F^{\mathrm{low}}\left(\bm{c}',\bm{i}\right)\right)F^{\mathrm{low}}\left(\bm{c}',\bm{i}\right)$. Then we have
\[
\left(G^{\mathrm{low}}\left(b\left(\bm{c},\bm{i}\right)\right)\colon F^{\mathrm{low}}\left(\bm{c}',\bm{i}\right)\right)=\left(G^{\mathrm{low}}\left(b\left(\bm{c}^{\mathrm{rev}},\bm{i}^{\mathrm{rev}}\right)\right)\colon F^{\mathrm{low}}\left(\left(\bm{c}'\right)^{\mathrm{rev}},\bm{i}^{\mathrm{rev}}\right)\right).
\]

In particular, we have
\[
G^{\mathrm{low}}\left(b\left(\bm{c},\bm{i}\right)\right)=F^{\mathrm{low}}\left(\bm{c},\bm{i}\right)+\sum_{\substack{\bm{c}<\bm{c}', \bm{c}<_{\mathrm{r}}\bm{c}'}}\left(G^{\mathrm{low}}\left(b\left(\bm{c},\bm{i}\right)\right)\colon F^{\mathrm{low}}\left(\bm{c}',\bm{i}\right)\right)F^{\mathrm{low}}\left(\bm{c}',\bm{i}\right).
\]

\end{cor}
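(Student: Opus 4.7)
The plan is to deduce part (1) from Corollary \ref{c:Lusztig1} by a duality argument, then derive part (2) from part (1) by applying $\theta\circ\ast$ to the expansion, and finally deduce the ``in particular'' statement by combining the standard left-lex unitriangularity of $G^{\mathrm{low}}$ relative to $F^{\mathrm{low}}$ with the symmetry established in part (2).

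For part (1), set $\Psi:=\theta\circ\ast$. By Proposition \ref{p:thetaast} we have $\Psi=\Theta_{w_0}$, and since $\mathfrak{g}$ is of finite type we have $\Uq^-(w_0)=\Uq^-$, so Lemma \ref{l:form-preserve} tells us that $\Psi$ preserves $(\,,\,)_L$ on all of $\Uq^-$. Moreover $\Psi^2=\mathrm{id}$ (since $\theta^2=\mathrm{id}$, $\ast^2=\mathrm{id}$, and a check on generators shows $\theta\circ\ast=\ast\circ\theta$). Using form invariance together with Corollary \ref{c:Lusztig1}, for any $\bm{c}'''\in\mathbb{Z}_{\geq 0}^N$ I compute
\begin{align*}
\bigl(\Psi(G^{\mathrm{low}}(b(\bm{c},\bm{i}))),\, G^{\mathrm{up}}(b(\bm{c}''',\bm{i}^{\mathrm{rev}}))\bigr)_L
&=\bigl(G^{\mathrm{low}}(b(\bm{c},\bm{i})),\, \Psi(G^{\mathrm{up}}(b(\bm{c}''',\bm{i}^{\mathrm{rev}})))\bigr)_L\\
&=\bigl(G^{\mathrm{low}}(b(\bm{c},\bm{i})),\, G^{\mathrm{up}}(b((\bm{c}''')^{\mathrm{rev}},\bm{i}))\bigr)_L\\
&=\delta_{\bm{c},\,(\bm{c}''')^{\mathrm{rev}}}.
\end{align*}
Since $\{G^{\mathrm{low}}(b(\bm{c}''',\bm{i}^{\mathrm{rev}}))\}_{\bm{c}'''}$ and $\{G^{\mathrm{up}}(b(\bm{c}''',\bm{i}^{\mathrm{rev}}))\}_{\bm{c}'''}$ are mutually dual bases of $\Uq^-$, this forces $\Psi(G^{\mathrm{low}}(b(\bm{c},\bm{i})))=G^{\mathrm{low}}(b(\bm{c}^{\mathrm{rev}},\bm{i}^{\mathrm{rev}}))$, proving (1).

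For part (2), apply $\Psi$ to the expansion of $G^{\mathrm{low}}(b(\bm{c},\bm{i}))$ in the lower PBW basis. The left-hand side becomes $G^{\mathrm{low}}(b(\bm{c}^{\mathrm{rev}},\bm{i}^{\mathrm{rev}}))$ by part (1), and on the right-hand side I use the equality $\Psi(F^{\mathrm{low}}(\bm{c}',\bm{i}))=F^{\mathrm{low}}((\bm{c}')^{\mathrm{rev}},\bm{i}^{\mathrm{rev}})$, which is exactly the statement about lower PBW vectors established inside the proof of Proposition \ref{p:PBW} (for $w=w_0$, where $\Theta_{w_0^{-1}}=\Theta_{w_0}=\Psi$). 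Matching coefficients in the basis $\{F^{\mathrm{low}}(\,\cdot\,,\bm{i}^{\mathrm{rev}})\}$ produces the claimed identity.

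For the ``in particular'' statement, first observe that the change-of-basis matrix from $F^{\mathrm{low}}$ to $G^{\mathrm{low}}$ is the transpose inverse of the change-of-basis matrix from $F^{\mathrm{up}}$ to $G^{\mathrm{up}}$: indeed $\{F^{\mathrm{up}}(\bm{c}',\bm{i})\}$ and $\{F^{\mathrm{low}}(\bm{c}',\bm{i})\}$ are dual bases (by the normalization defining $F^{\mathrm{up}}$ together with orthogonality of the lower PBW basis), and $\{G^{\mathrm{up}}(b)\}$ and $\{G^{\mathrm{low}}(b)\}$ are dual by definition. Hence the lower-triangularity of $G^{\mathrm{up}}$ relative to $F^{\mathrm{up}}$ with unit diagonal (Proposition \ref{p:dualcanonical}, DCB2) dualizes to the upper-triangularity with unit diagonal of $G^{\mathrm{low}}$ relative to $F^{\mathrm{low}}$ in the left lex order: the coefficient vanishes unless $\bm{c}'\geq\bm{c}$, and equals $1$ when $\bm{c}'=\bm{c}$. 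Transporting this unitriangularity through part (2) (applied with $\bm{i}$ replaced by $\bm{i}^{\mathrm{rev}}$) forces the coefficient to also vanish unless $(\bm{c}')^{\mathrm{rev}}\geq\bm{c}^{\mathrm{rev}}$, i.e.\ unless $\bm{c}'\geq_{\mathrm{r}}\bm{c}$, yielding the stated expansion. The only delicate bookkeeping is the interplay between left/right lex orders and tuple reversal; no serious mathematical obstacle appears, since Corollary \ref{c:Lusztig1}, Lemma \ref{l:form-preserve}, and the lower-PBW version of Proposition \ref{p:PBW} supply all the necessary input.
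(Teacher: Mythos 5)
Your proposal is correct and follows the route the paper intends: the paper derives this corollary in one line from Lemma \ref{l:form-preserve} together with Corollary \ref{c:Lusztig1}, and your argument is exactly the detailed version of that — self-adjointness of $\theta\circ\ast=\Theta_{w_0}$ from form-invariance and $\Theta_{w_0}^2=\mathrm{id}$, duality of $\{G^{\mathrm{low}}\}$ with $\{G^{\mathrm{up}}\}$ and of $\{F^{\mathrm{low}}\}$ with $\{F^{\mathrm{up}}\}$, and the lower-PBW transformation from the proof of Proposition \ref{p:PBW}. All the supporting checks (e.g.\ $\Uq^-(w_0)=\Uq^-$, the transpose-inverse relation between the two change-of-basis matrices, and the translation between reversal and the right lexicographic order) are accurate.
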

\begin{rem}\label{r:finite}
We have to remark that Corollary \ref{c:Lusztig1} was already proved by Lusztig \cite[2.11]{MR1035415}.  
Moreover, when $\mathfrak{g}$ is of finite type, we can also show the equality \eqref{reverse} in Corollary \ref{c:right-lex} without using quantum twist maps, by the results in \cite[2.11]{MR1035415} together with Proposition \ref{p:Saito_refl}. Note that $\Theta_w=(T_{w_0w^{-1}})^{-1}\circ \theta\circ*$ for all $w\in W$.
\end{rem}



\end{document}